\definecolor{dblue}{rgb}{0,0,.6}
\numberwithin{equation}{section}
\newtheorem{theorem}{Theorem}[section]
\theoremstyle{plain}
\newtheorem{corollary}[theorem]{Corollary}
\newtheorem{definition}[theorem]{Definition}
\newtheorem{lemma}[theorem]{Lemma}
\newtheorem{proposition}[theorem]{Proposition}
\newtheorem{remark}[theorem]{Remark}
\newcommand{\del}{\partial}
\newcommand{\Z}{\mathbb Z}
\newcommand{\Q}{\mathbb Q}
\newcommand{\CP}{\mathbb P}
\newcommand{\im}{\operatorname{im}}
\newcommand{\Pic}{\operatorname{Pic}}
\newcommand{\id}{\operatorname{id}}
\newcommand{\Spec}{\operatorname{Spec}}
\newcommand{\CH}{\operatorname{CH}}
\newcommand{\sing}{\operatorname{sing}} 
\newcommand{\red}{\operatorname{red}} 
\newcommand{\Proj}{\operatorname{Proj}}
\newcommand{\Frac}{\operatorname{Frac}}
  \newcommand{\Tor}{\operatorname{Tor}}
  \newcommand{\coker}{\operatorname{coker}} 
  \newcommand{\spe}{\operatorname{sp}} 
    \newcommand{\longsquiggly}{\xymatrix{{}\ar@{~>}[r]&{}}}
\newcommand{\dashedlongrightarrow}{\xymatrix@1@=15pt{\ar@{-->}[r]&}}
\renewcommand{\longrightarrow}{\xymatrix@1@=15pt{\ar[r]&}}
\renewcommand{\mapsto}{\xymatrix@1@=15pt{\ar@{|->}[r]&}}
\renewcommand{\twoheadrightarrow}{\xymatrix@1@=15pt{\ar@{->>}[r]&}}
\newcommand{\hooklongrightarrow}{\xymatrix@1@=15pt{\ar@{^(->}[r]&}}
\newcommand{\congpf}{\xymatrix@1@=15pt{\ar[r]^-\sim&}}
\renewcommand{\cong}{\simeq}
\begin{document}    

\title[The diagonal of quartic fivefolds]{The diagonal of quartic fivefolds}

\author{Nebojsa Pavic} 
\address{Institute of Algebraic Geometry, Leibniz University Hannover, Welfengarten 1, 30167 Hannover , Germany.}
\email{pavic@math.uni-hannover.de}

\author{Stefan Schreieder} 
\address{Institute of Algebraic Geometry, Leibniz University Hannover, Welfengarten 1, 30167 Hannover , Germany.}
\email{schreieder@math.uni-hannover.de}

\date{January 10, 2023} 
\subjclass[2020]{primary 14J70, 14C25; secondary 14M20, 14E08} 

\keywords{Hypersurfaces, quartics, algebraic cycles, rationality, retract rationality.}

\begin{abstract}    
We show that a very general quartic hypersurface in $\CP^6_k$ over a field  of characteristic different from 2 does not admit a decomposition of the diagonal, hence  is not retract rational. 
This generalizes a result of Nicaise--Ottem \cite{NO}, who showed stable irrationality over  fields of characteristic zero.
To prove our result, we introduce a new cycle-theoretic obstruction 
that may be seen as an analogue of 
the motivic obstruction for rationality  in characteristic zero, introduced by Nicaise--Shinder \cite{NS} and Kontsevich--Tschinkel \cite{KT}.
\end{abstract}

\maketitle

\section{Introduction}

A variety $X$ over a field $k$ is rational if it is isomorphic to $\CP^{\dim X}_k$ after removing proper closed subvarieties from both sides; it is stably rational if $X\times \CP^n_k$ is rational for some $n\geq 0$.
Moreover, $X$ is retract rational if there is an integer $N$ and rational maps $f:X\dashrightarrow \CP^N$ and $g:\CP^N\dashrightarrow X$ such that $g\circ f$ is defined and coincides with the identity: $g\circ f=\id_X$.
Finally,  $X$ is unirational if it receives a dominant rational map from some proejctive space.
We have the following well-known implications:
\begin{align} \label{eq:implication}
\text{rational}\ \ \ \Longrightarrow\ \ \ \text{stably rational}\ \ \ \  \Longrightarrow\ \ \ \text{retract rational}\ \ \ \  \Longrightarrow\ \ \ \text{unirational}.
\end{align}
By \cite{BCTSS} and \cite{artin-mumford}, the first and last implications are both strict over algebraically closed fields; it is an open problem whether this holds true for the second implication  as well.

Retract rational varieties admit a decomposition of the diagonal, which means that the point of the diagonal $\delta_X\in X_{k(X)}$ is rationally equivalent to $z_{k(X)}$ for some $k$-rational point $z\in X$.
In \cite{voisin}, with improvements in \cite{CTP,Sch-Duke}, Voisin used this implication to initiate a cycle-theoretic degeneration technique which, roughly speaking, allows to disprove retract rationality for varieties $X$ that admit a degeneration to a mildly singular variety $Y$ with a cohomological obstruction for the existence of a decomposition of the diagonal,  such as global differential forms \cite{totaro} or unramified cohomology \cite{CTP,HPT,Sch-JAMS}.
This method can be extended to cases where the special fibre of the degeneration breaks up into several pieces (see e.g.\ \cite[Lemma 2.4]{totaro}, \cite[Proposition 6.1]{Sch-torsion} and \cite[Theorem 8.5]{Sch-survey}).
However,  at least over algebraically closed fields, it is crucial that at least one component $Y_{i_0}$ of the special fibre $Y$ is irrational with some cohomological obstruction, while that obstruction class must vanish on the intersection of $Y_{i_0}$ with any other component $Y_j$ (e.g.\ if each component of $Y_{i_0}\cap Y_j$ is rational). 

Using the weak factorization theorem,  Nicaise--Shinder \cite{NS} and Kontsevich--Tschinkel \cite{KT} introduced powerful motivic obstructions for rationality and stable rationality in characteristic zero.
Their approach proves (stable) irrationality for varieties that admit degenerations to simple normal crossing varieties $Y=\bigcup_{i\in I}Y_i$, such that
$$
 [ \CP_k^{\dim Y}]+\sum_{\emptyset\neq J\subset I} (-1)^{|J|} [Y_{J}\times \CP^{|J|-1}_k]
$$
is nonzero in the free abelian group generated by (stable) birational equivalence classes of smooth projective $k$-varieties, where $Y_J=\bigcap_{j\in J}Y_j$.
For instance, it follows from this formula that a variety that specializes to a union of two smooth rational varieties that meet along a stably irrational variety is stably irrational. 
This approach has been implemented successfully by Nicaise--Ottem \cite{NO}, who used it to prove that a very general quartic fivefold over an  uncountable  algebraically closed field of characteristic zero is not stably rational, by writing down a degeneration into two components (that may be chosen to be rational) such that their intersection is stably irrational by the work of Hassett--Pirutka--Tschinkel \cite{HPT}.

The motivic method of Nicaise--Shinder and Kontsevich--Tschinkel does not seem to generalize to detect retract rationality. 
In particular, one may speculate that the motivic obstruction from \cite{NS,KT} could be a suitable tool to distinguish retract rational varieties from stably rational varieties and quartic fivefolds treated in \cite{NO} yield the first potential candidates, regarding the strictness of the second implication in (\ref{eq:implication}).

In this paper, we prove that quartic fivefolds are in fact not retract rational and hence do not give counterexamples to strictness of the second implication in (\ref{eq:implication}).

\begin{theorem}\label{thm:quartic-fivefold}
Let $k$ be an uncountable field of characteristic different from $2$.
A very general quartic $X\subset \CP^6_k$ does not admit a decomposition of the diagonal, hence is not retract rational.
\end{theorem}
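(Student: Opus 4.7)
The plan is to combine the diagonal-decomposition degeneration method of Voisin \cite{voisin}, Colliot-Thélène--Pirutka \cite{CTP}, and Schreieder \cite{Sch-Duke} with a new cycle-theoretic refinement that plays an analogous role to the motivic obstruction of Nicaise--Shinder \cite{NS} and Kontsevich--Tschinkel \cite{KT} but is sensitive to retract rationality rather than only stable rationality, and that moreover does not rely on weak factorization (so it can operate in characteristic $\neq 2$, not only in characteristic $0$ as in \cite{NO}).

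First I would set up the Nicaise--Ottem degeneration: a one-parameter family over a DVR $R$ whose general fiber is a very general smooth quartic in $\CP^6$ and whose special fiber is a reducible quartic $Y=Y_1\cup Y_2$ with mild (simple normal crossings or similar) singularities, in which each component $Y_i$ is rational (indeed universally $\CH_0$-trivial after a suitable resolution), while the intersection $Z=Y_1\cap Y_2$, or an irreducible component of a birational model thereof, admits a dominant map to a Hassett--Pirutka--Tschinkel--type threefold carrying a nontrivial $2$-torsion unramified Brauer class \cite{HPT}, hence is not retract rational. This setting lies outside the reach of the classical Voisin strategy, because no single component $Y_i$ is irrational; the obstruction lives \emph{only} on the intersection $Z$, and one cannot pick out a component whose obstruction vanishes on all further intersections, as in \cite{totaro,Sch-JAMS}.

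Second, I would construct the new obstruction. The idea is to trace, by specialization of cycles, what a hypothetical decomposition of the diagonal on the generic fiber imposes on the reducible special fiber $Y$: it yields a coherent system of zero-cycles on the components $Y_i$ together with compatibility conditions on the intersection $Z$, mimicking a decomposition of the diagonal for a reducible scheme. A Mayer--Vietoris / Bloch--Srinivas analysis relating $\CH_0$ of $Y$ to $\CH_0(Y_i)$ and $\CH_0(Z)$ (and their universal analogues over function fields) then forces the Brauer class of $Z$ furnished by the HPT construction to be the restriction of a class defined globally on the $Y_i$, hence to vanish upon restriction to $Z$; this contradicts its nontriviality. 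Finally one upgrades this from the special fiber to the very general fiber by the standard specialization-of-diagonal argument together with an uncountability argument.

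The main obstacle, and the main new input of the paper, is the precise formulation of this obstruction in such a way that (i) it genuinely detects the existence of a decomposition of the diagonal (not merely stable birational type), (ii) it specializes correctly through families whose special fibre is reducible with the singular/intersection geometry occurring in the NO degeneration, and (iii) it can be computed purely from the combinatorial and cycle-theoretic data of the nerve $\{Y_i, Z\}$. In other words, one must prove a degeneration lemma for the diagonal on reducible special fibres, and extract from it an obstruction that is concentrated on the intersection stratum; this is the step that both generalises the Nicaise--Shinder/Kontsevich--Tschinkel formula to the cycle-theoretic setting and, crucially, removes the need for weak factorisation, thereby gaining the extension to positive characteristic.
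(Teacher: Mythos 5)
Your high-level plan correctly identifies the ingredients — the Nicaise--Ottem degeneration $Y=Y_1\cup Y_2$ with $Z=Y_1\cap Y_2$ carrying the Hassett--Pirutka--Tschinkel unramified class, the need for a cycle-theoretic analogue of the Nicaise--Shinder/Kontsevich--Tschinkel obstruction that bypasses weak factorization, and the final specialization-to-very-general argument. But you then say that the real work is "the precise formulation of this obstruction" and do not supply it; this is exactly the gap, because the sketch you give in its place cannot work as stated. A Mayer--Vietoris / Bloch--Srinivas comparison of $\CH_0(Y)$, $\CH_0(Y_i)$, $\CH_0(Z)$ over the (algebraically closed) residue field sees nothing: all these groups of zero-cycles are trivial there. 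The obstruction must be tested over a \emph{non-closed} base, and the paper's key device is exactly this: it defines a map $\Phi_{\mathcal X}:\CH_1(Y)\to\bigoplus_i\CH_0(Y_i)$ via pushforward into the regular total space $\mathcal X$ followed by Gysin restriction, and then tests surjectivity of $\Phi_{\mathcal X_A}$ after an unramified base change $A/R$ whose residue field is the \emph{function field of a component} of the special fibre. One takes $A=\mathcal O_{\tilde{\mathcal X},P_Z}$ with residue field $\kappa(P_Z)$, and the class that fails to be hit is the diagonal point $\delta_{P_Z}-z_{\kappa(P_Z)}$.

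Two further essential steps are absent from your proposal. First, after the NO degeneration the intersection $Z$ is not itself a component of the special fibre, so there is no component over whose function field one could test the obstruction; the paper must first perform a $2:1$ base change and blow up $Z$ to produce a strictly semi-stable chain $Y_1\cup P_Z\cup\tilde Y_2$ with a $\CP^1$-bundle $P_Z\to Z$ in the middle. Second, and more subtly, without further care this setup is indistinguishable from a degeneration of $\CP^n$ with the same combinatorial nerve (blow up a bad $Z$ inside the special fibre of $\CP^n_R\to\Spec R$, then blow up a point of the exceptional divisor), which the introduction explicitly flags as the central difficulty. The resolution is to work \emph{modulo $2$}: because $P_Z$ meets $Y_1$ and $\tilde Y_2$ in two disjoint sections, the pullback contribution $q^\ast\CH_0(Z)$ to $\CH_1(P_Z)$ feeds into $\Phi$ with a factor of $2$ and dies mod $2$, which is what makes the obstruction sensitive to the \emph{geometric} generic fibre rather than just the generic fibre (this is exactly item (2) of Theorem 1.2 in the paper, and it is where the pairing with the mod-$2$ unramified class of \cite{HPT} enters). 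Finally, your description of the contradiction ("forces the Brauer class of $Z$ to be a restriction from the $Y_i$, hence to vanish") does not match the actual mechanism: the paper deduces, after several further parameter specializations $\lambda,v,u,s\to 0$, that $\delta_{Z_0}$ would have to lie in $\im(\CH_0(Z_0)\to\CH_0(Z_{0,k_0(Z_0)}))$ modulo $2$, and then derives the contradiction by Merkurjev pairing with $\pi^\ast\alpha$; no restriction of Brauer classes from $Y_i$ to $Z$ is involved.
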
 

By a theorem of Morin (see \cite{conte-murre}), a general quartic fivefold $X\subset \CP^6_k$ as in the above theorem is known to be unirational.
On the other hand,  over fields of positive characteristic, even rationality of quartic fivefolds was previously open. 

The rationality problem for Fano hypersurfaces $X\subset \CP^{n+1}_k$ is a classical problem in algebraic geometry, see e.g.\ \cite{CG,IM,kollar,CTP,totaro,Sch-JAMS,Sch-torsion}.
In arbitrary dimension, the best known bound  is due to \cite{Sch-JAMS,Sch-torsion}, where it is shown that over uncountable  fields $k$, very general hypersurfaces of dimension $n\geq 3$ and degree $d\geq \log_2n +2$ (resp.\ $d\geq \log_2n+3$ if $\operatorname{char}(k)=2$) do not admit a decomposition of the diagonal and hence are neither retract rational, nor stably rational.
The case of cubic threefolds \cite{CG,Mur}, the aforementioned result of Nicaise--Ottem \cite{NO} and Theorem \ref{thm:quartic-fivefold} above are the only cases where this bound has been improved.
 
In order to use a similar degeneration in Theorem \ref{thm:quartic-fivefold}  as  Nicaise--Ottem used in  \cite{NO}, we roughly speaking face the problem of disproving the existence of a decomposition of the diagonal for the geometric generic fibre of a family that degenerates into a union of two rational components $Y=Y_1\cup Y_2$ such that $Y_{12}=Y_1\cap Y_2$ is integral and does not admit a decomposition of the diagonal. 
The naive idea is to perform a $2:1$ base change and to blow-up $Y_{12}$, to arrive at a semi-stable model whose special fibre has three components: two rational end components and a component in the middle that is a $\CP^1$-bundle over $Y_{12}$ and hence does not admit a decomposition of the diagonal. 

At this point, the following  problem arises: we could have started with the trivial family $\CP^n_R\to \Spec R$, blow-up a subvariety $Z$ of the special fibre followed by the blow-up of a general point of the exceptional divisor above $Z$.
If $Z$ does not admit a decomposition of the diagonal, then we arrive at a degeneration of $\CP^n$ into a chain of three components  
where again the two end components are rational, while the component in the middle does not admit a decomposition of the diagonal.
So how can we tell apart this degeneration, from the one discussed above? 

\subsection{Method}
We describe our solution to the above mentioned problem briefly in this section.
For this, let $R$ be a discrete valuation ring and let $\mathcal X\to \Spec R$ be a projective strictly semi-stable $R$-scheme with special fibre $Y=\bigcup_{i\in I} Y_i$.
There is a canonical map 
\begin{align*}
\Phi_{\mathcal X}:\CH_1(Y) \longrightarrow \ker\left(\deg:\bigoplus_{i\in I}\CH_0(Y_i)\to \Z\right),  \ \ \ \ \ 
 \gamma \mapsto \sum_{i\in I} \iota_i^\ast(\iota_\ast \gamma),
\end{align*}
where $\iota:Y\to \mathcal X$ and $\iota_i:Y_i\to \mathcal X$ denote the respective closed immersions, cf.\ Section \ref{subsec:obstruction} below.
It is easy to see that $\Phi_{\mathcal X}$ does not depend on $\mathcal X$, but in fact only on the special fibre $Y$. 
Indeed,  if $\gamma\in \CH_1(Y)$ is supported on $Y_{i_0}$, then  the contribution $\Phi_{\mathcal X,Y_i}(\gamma)\in \CH_0(Y_i)$ is given by
$$
\Phi_{\mathcal X,Y_i}(\gamma)=\begin{cases}
\gamma|_{Y_i\cap Y_{i_0}}\ \ & \text{if $i\neq i_0$,}\\
-\sum_{j\neq i_0}\gamma|_{Y_i\cap Y_{j}}\ \ \ \ &\text{if $i=i_0$,}
\end{cases}
$$
where $\gamma|_{Y_i\cap Y_{j}}$ is the zero-cycle on $Y_i$, given by the intersection of $\gamma$ (viewed as a $1$-cycle on $Y_i$) with   $Y_i\cap Y_{j}$ (viewed as a divisor on $Y_i$).
This description allows to compute $\Phi_{\mathcal X}$ effectively. 

If $A/R$ is an unramified extension of dvr's, then the base change $\mathcal X_A$ is again strictly semi-stable and we get a map $\Phi_{\mathcal X_A}$ as above.
Even if the residue field $\kappa$ of $R$ is algebraically closed, the residue field of $A$ may be a function field over $\kappa$ (e.g.\ $\kappa(Y_i)$ for some $i$) in which case $\Phi_{\mathcal X_A}$ is a map between  Chow groups of varieties over interesting non-closed fields.
In particular, even if $\Phi_{\mathcal X}$ is surjective,  this may very well fail for $\Phi_{\mathcal X_A}$.

\begin{theorem}\label{thm:method}
Let $R$ be a discrete valuation ring with algebraically closed residue field and let $\pi:\mathcal X\to \Spec R$ be a projective strictly semi-stable $R$-scheme.
\begin{enumerate}
\item If the generic fibre of $\pi$ admits a decomposition of the diagonal, then 
for any unramified extension $A/R$ of dvr's, the map $\Phi_{\mathcal X_{A}}$ is surjective.\label{item:method:1}
\item 
If the geometric generic fibre of $\pi$ admits a decomposition of the diagonal and the dual graph of the special fibre is a straight line (i.e.\ the components form a chain), then 
for any unramified extension $A/R$ of dvr's, the map $\Phi_{\mathcal X_{A}}$ is surjective modulo $2$. \label{item:method:2}
\end{enumerate} 
\end{theorem}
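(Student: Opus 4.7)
Let $K=\Frac R$, $K_A=\Frac A$, and let $\kappa_A$ be the residue field of $A$. Given a degree-zero cycle $z=\sum_i z_i\in\ker(\deg)$ with $z_i\in\CH_0(Y_{i,\kappa_A})$, I first apply the moving lemma on the smooth proper varieties $Y_{i,\kappa_A}$ to represent each $z_i$ by a cycle supported on the open subset $Y_{i,\kappa_A}^\circ:=Y_{i,\kappa_A}\setminus\bigcup_{j\neq i}Y_{j,\kappa_A}$; at such points $\mathcal X_A$ is smooth over $A$. Through each closed point $p$ in the support of $z_i$ I choose a horizontal regular curve $C_p\subset\mathcal X_A$, flat over $A$ and meeting $Y_i$ transversally at $p$; its generic fibre is a closed point $\tilde p\in X_{K_A}$. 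The resulting lift $\tilde z\in\CH_0(X_{K_A})$ has degree zero, and its horizontal closure $\overline{\tilde z}\subset\mathcal X_A$ satisfies $Y_i\cdot\overline{\tilde z}=z_i$ in $\CH_0(Y_{i,\kappa_A})$ for every $i$.

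By a standard Bloch--Srinivas argument, a decomposition of the diagonal for the smooth proper $K$-variety $X_K$ is equivalent to $\CH_0(X_L)\cong\Z$ via degree for every field extension $L/K$; applied to $L=K_A$, this forces $\tilde z=0$ in $\CH_0(X_{K_A})$. I then write $\tilde z=\sum_\alpha\div_{C_\alpha}(f_\alpha)$ for curves $C_\alpha\subset X_{K_A}$ and $f_\alpha\in K_A(C_\alpha)^\times$, take closures $\overline{C_\alpha}\subset\mathcal X_A$, extend the $f_\alpha$ to rational functions there, and decompose each principal $1$-cycle $\div_{\overline{C_\alpha}}(f_\alpha)\in\CH_1(\mathcal X_A)$ into its horizontal and vertical parts. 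Summing yields the identity $\overline{\tilde z}+V=0$ in $\CH_1(\mathcal X_A)$ for some $V\in\CH_1(Y_{\kappa_A})$. Intersecting with each Cartier divisor $Y_i$ on $\mathcal X_A$ and using the formula for $\Phi$ stated in the excerpt gives $z_i+\Phi_{\mathcal X_A,Y_i}(V)=0$ for all $i$, whence $\Phi_{\mathcal X_A}(-V)=z$.

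\textbf{Plan for part (2).} A decomposition of the diagonal for $X_{\overline K}$ is only defined over some finite extension $L/K$, so $\tilde z$ a priori vanishes only in $\CH_0(X_{L\cdot K_A})$; trace pushforward then yields $[L:K]\cdot\tilde z=0$ in $\CH_0(X_{K_A})$. Running the argument of Part~(1) with $[L:K]\cdot\tilde z$ in place of $\tilde z$ produces $V\in\CH_1(Y_{\kappa_A})$ with $\Phi_{\mathcal X_A}(V)=[L:K]\cdot z$, so the class of $z$ in $\coker(\Phi_{\mathcal X_A})$ is annihilated by $[L:K]$; the remaining task is to show that it is also killed by $2$.

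The strategy I would pursue to achieve this is to invoke the straight-line hypothesis via the ramified quadratic base change $R\subset R':=R[\sqrt t]$: the pullback $\mathcal X_{R'}$ has special fibre of multiplicity~$2$ along each original component, and its strictly semi-stable resolution $\mathcal X'\to\Spec R'$ again has a chain dual graph, now obtained by inserting a $\CP^1$-bundle over each intersection $Y_i\cap Y_{i+1}$ between adjacent components. If the new generic fibre $X_{K'}$ itself admits a decomposition of the diagonal, then Part~(1) applies over $R'$ to the unramified extension $A':=A\otimes_R R'$ of $R'$, and transferring the surjectivity of $\Phi_{\mathcal X'_{A'}}$ back to $\Phi_{\mathcal X_A}$ along the degree-$2$ ramified extension $A'/A$ introduces the desired factor of~$2$. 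The main obstacle is this descent step: there is no a priori reason for the geometric decomposition of the diagonal to descend to the specific quadratic extension $K'=K(\sqrt t)$, and the chain structure of the special fibre of $\mathcal X'$ must be used in an essential way to build a $0$-cycle of degree $1$ on $X_{K'}$ which descends the geometric one up to a $2$-torsion error that becomes invisible after reduction modulo~$2$.
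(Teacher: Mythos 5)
Your argument for part~(1) is essentially the paper's proof: after moving the $z_i$ into the smooth locus you lift them by Hensel's lemma (phrased here as choosing horizontal regular curves) to a flat $1$-cycle, use universal $\CH_0$-triviality of the generic fibre to kill the degree-zero generic restriction, and then use the localization exact sequence $\CH_1(Y_L)\to\CH_1(\mathcal X_A)\to\CH_0(X_{\Frac A})\to 0$ (your decomposition of principal divisors into horizontal and vertical parts is a rephrasing of this) to produce the vertical cycle $\gamma$ with $\Phi_{\mathcal X_A}(\gamma)=(z_i)_i$. One small omission: you should first pass to the completion, since the lifting step requires Henselian hypotheses, and $\Phi_{\mathcal X_A}$ depends only on the special fibre so this is harmless.

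For part~(2), however, your proposal contains two genuine gaps. First, the observation that $[L:K]\cdot z\in\im(\Phi_{\mathcal X_A})$ is a red herring: $[L:K]$ may be even, so this gives no information about the class of $z$ modulo $2$, and the paper never uses it. Second, and more seriously, the central step you propose — base changing to $R[\sqrt t]$ and hoping that the geometric decomposition of the diagonal descends to $K(\sqrt t)$ — does not hold in general, as you yourself acknowledge. The paper's strategy is the opposite: rather than forcing the decomposition down to a degree-$2$ extension, one goes \emph{up} to a finite extension $F/K$ over which $X_F$ genuinely admits a decomposition of the diagonal, takes the strictly semi-stable resolution $\tilde{\mathcal X}\to\Spec R_F$ of the (possibly highly ramified) base change, which inserts $r$ copies of a $\CP^1$-bundle over each intersection $Y_i\cap Y_{i+1}$, and arranges by a further base change that $r$ is \emph{even}. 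Part~(1) now applies over $R_F$ and produces a lift $\tilde\gamma\in\CH_1(\tilde Y_L)$ of the given $(z_i)$, extended by zero on the inserted components. The heart of the matter is then a non-obvious comparison between $\Phi_{\tilde{\mathcal X}_A}$ and $\Phi_{\mathcal X_A}$: the square relating them via $q_*$ does \emph{not} commute, but it commutes modulo $2$ for those $\tilde\gamma$ whose $\Phi$-values on the inserted $\CP^1$-bundles vanish mod~$2$. Proving this requires the chain hypothesis in an essential way: one writes the contribution on the $j$-th inserted component as $\alpha_{R_{i,j-1}}-2\alpha_{R_{i,j}}+\alpha_{R_{i,j+1}}$, and the resulting telescoping identities modulo $2$, together with the parity of $r$, match the boundary terms $\gamma_{Y_i}|_{Y_{i,i+1}}$ and $\gamma_{Y_{i+1}}|_{Y_{i,i+1}}$ correctly; this is where the factor $2$ really comes from, not from a degree-$2$ base change. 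Your sketch does not supply this computation, and it is exactly the piece that makes the theorem work.
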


If the geometric generic fibre of $\pi$ admits a decomposition of the diagonal, then up to a base change and a modification of the total space, we arrive at a situation where the generic fibre admits such a decomposition, and so item (\ref{item:method:1}) may be applied.  
While this route could be taken to prove Theorem \ref{thm:quartic-fivefold},  it is significantly easier to work with the enhanced version in item (\ref{item:method:2}), where no additional base change and blow-ups are necessary, and so it allows us to work with a special fibre that has only few components. 

We like to think about Theorem \ref{thm:method} as a cycle-theoretic analogue of the motivic method from \cite{NS,KT} exploited in \cite{NO}.
Note however that there is no direct relation among the two methods and it may be possible to find situations where one applies, but not the other.

To give an idea of how to apply Theorem \ref{thm:method}, we state the following consequence as an example.

\begin{corollary} \label{cor:method}
Let $R$ be a dvr with algebraically closed residue field $k$ and let $\pi:\mathcal X\to \Spec R$ be a projective strictly semi-stable $R$-scheme whose special fibre $Y=Y_1\cup Y_2$ has two components.
Assume that
\begin{itemize}
\item $Y$ is universally $\CH_1$-trivial in the sense that for any field extension $L/k$, the natural map $\CH_1(Y)\to \CH_1(Y_L)$ is surjective;
\item $Y_{12}:=Y_1\cap Y_2$ is integral and its torsion order is even (cf.\ Section \ref{subsec:Tor(X)} below). 
\end{itemize}
Then the geometric generic fibre of $\pi$ does not admit a decomposition of the diagonal.
\end{corollary}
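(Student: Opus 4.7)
The plan is to pass to a semi-stable model with three components in a chain and then apply Theorem~\ref{thm:method}(\ref{item:method:2}). I argue by contradiction, assuming the geometric generic fibre of $\pi$ admits a decomposition of the diagonal. Since $\operatorname{char}(k)\neq 2$, I first perform a $2:1$ ramified base change $R\hookrightarrow R'$ and then blow up $Y_{12}$ in $\mathcal{X}_{R'}$ to obtain a strictly semi-stable $R'$-scheme $\pi':\mathcal{X}'\to\Spec R'$ whose special fibre $Y'=Y_1\cup E\cup Y_2$ is a chain of three components; here $E$ is a $\mathbb P^1$-bundle over $Y_{12}$ meeting $Y_1,Y_2$ along two disjoint sections $E_1,E_2\cong Y_{12}$. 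The geometric generic fibre of $\pi'$ coincides with that of $\pi$ (since $\overline{\Frac R}=\overline{\Frac R'}$) and hence still admits a decomposition of the diagonal. As the dual graph of $Y'$ is a straight line, Theorem~\ref{thm:method}(\ref{item:method:2}) asserts that $\Phi_{\mathcal{X}'_A}$ is surjective modulo~$2$ for every unramified extension of dvr's $A/R'$.

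I then choose such an $A/R'$ with residue field $L:=k(Y_{12})$. Write $B\subset \CH_0(Y_{12,L})$ for the image of the base change map $\CH_0(Y_{12})\to\CH_0(Y_{12,L})$. Using the formula from the introduction, the fibre $\gamma_E$ of $E\to Y_{12}$ over any $L$-point $p\in Y_{12,L}$ contributes to the $E$-component of $\Phi_{\mathcal{X}'_A}$ the class $-\gamma_E\cdot(E_1+E_2)$, which under the canonical identification $\pi_\ast\colon \CH_0(E_L)\xrightarrow{\sim}\CH_0(Y_{12,L})$ becomes $-2[p]$ and therefore vanishes modulo~$2$. The universal $\CH_1$-triviality of $Y$, together with the $\mathbb P^1$-bundle structure of $E$, then implies that the remaining contributions to the $E$-component of the image of $\Phi_{\mathcal{X}'_A}$ modulo~$2$ come from base-changed cycles, so that this image is contained in the reduction of $B$ modulo $2\CH_0(Y_{12,L})$ inside $\CH_0(E_L)/2\cong \CH_0(Y_{12,L})/2$.

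Finally, for a $k$-rational base point $z_0\in Y_{12}(k)$ and the canonical $L$-rational diagonal point $\Delta\in Y_{12}(L)$ (given by the generic point of $Y_{12}$), I consider the target class
\[
\tau:=(0,\,[\Delta_E]-[z_{0,E}],\,0)\in \ker\bigl(\deg\colon \CH_0(Y_{1,L})\oplus\CH_0(E_L)\oplus \CH_0(Y_{2,L})\to\Z\bigr),
\]
where $[\Delta_E]-[z_{0,E}]\in\CH_0(E_L)$ is any class mapping to $[\Delta]-[z_{0,L}]\in\CH_0(Y_{12,L})$ under $\pi_\ast$. The even torsion-order hypothesis on $Y_{12}$ is designed to guarantee that $[\Delta]-[z_{0,L}]$ is nonzero in the quotient $\CH_0(Y_{12,L})/(B+2\CH_0(Y_{12,L}))$, so, combined with the previous paragraph, $\tau$ does not lie in the image of $\Phi_{\mathcal{X}'_A}$ modulo~$2$, contradicting Theorem~\ref{thm:method}(\ref{item:method:2}).

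The hard part will be the assertion made in the second paragraph: after the $2:1$ base change and the blow-up, one must show that the universal $\CH_1$-triviality of $Y$ still controls the image of $\Phi_{\mathcal{X}'_A}$ modulo~$2$ along $E$. This requires a careful dissection of $\CH_1(Y'_L)$ into contributions from each component, using the $\mathbb P^1$-bundle structure of $E$ and the blow-up description of $Y'$, and showing that every ``new'' $L$-rational $1$-cycle on $E$ (namely fibres over $L$-points and $L$-sections) contributes either an even multiple of a point or a class coming from $B$ to the $E$-component of $\Phi_{\mathcal{X}'_A}$.
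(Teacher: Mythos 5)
Your proposal is correct and follows the same overall route as the paper: base change by degree two, blow up $Y_{12}$ to get a chain $Y_1\cup E\cup Y_2$ with $E$ a $\CP^1$-bundle over $Y_{12}$, invoke Theorem~\ref{thm:method}(\ref{item:method:2}), observe that $q^\ast\CH_0((Y_{12})_L)$ contributes $-2\alpha$ (hence nothing mod~$2$) to the $E$-component of $\Phi$, and use universal $\CH_1$-triviality of $Y$ to confine the rest of the image to base-changed classes. The one genuine difference is the endgame: you work with an $A/R'$ whose residue field is $L=k(Y_{12})$ and derive the contradiction by applying Lemma~\ref{lem:torsion} directly to $\delta_{Y_{12}}$, whereas the paper takes $A=\mathcal O_{\mathcal X',P}$ (so $L=k(P)$, the function field of the $\CP^1$-bundle), concludes that $\delta_P$ lies in $\im(\CH_0(P)\to\CH_0(P_{k(P)}))$ modulo $2$, hence that $P$ has torsion order not divisible by $2$, and then transfers this to $Y_{12}$ via stable birational invariance of the torsion order. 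Your version avoids quoting that invariance; the price is that you must justify the existence of an unramified $A/R'$ of dvr's with residue field $k(Y_{12})$ --- the paper's choice is handed to you by the geometry since $P$ is a divisor in the regular scheme $\mathcal X'$, whereas $Y_{12}$ has codimension two, so the local ring of $\mathcal X'$ at its generic point is not a dvr. A suitable $A$ does exist (for instance the local ring of $Y_{12}\times_k\Spec R'$ at the generic point of its special fibre), but you should say so, since it is the only place the two arguments diverge.
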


Theorem \ref{thm:quartic-fivefold} will be deduced from Theorem \ref{thm:method} and a degeneration that is inspired by   \cite{NO}.
Note however that checking that the condition in item (\ref{item:method:2}) of Theorem \ref{thm:method} is violated is subtle and requires several additional degenerations.
In particular, even though Corollary \ref{cor:method} illustrates well the basic idea, the technical details are more complicated and we are not able to literally apply the statement of Corollary \ref{cor:method}. 
The key input of course remains the striking example of Hassett--Pirutka--Tschinkel in \cite{HPT}. 

The problem of finding cycle-theoretic obstructions for rationality that are sensitive to semi-stable degenerations into unions of rational varieties has also been considered in \cite{BBGa,BBGb}.
Their approach relies on explicit identities in the prelog Chow ring and is different from ours.

\begin{remark} 
In \cite[Theorem 7.1]{NO}, Nicaise--Ottem showed that a very general complete intersection $X_{23}\subset \CP^6_k$ of type $(2,3)$  is not stably rational over any uncountable algebraically closed field $k$ of characteristic zero.  
Similarly to the case of quartic fivefolds, this result  generalizes as follows:  
for any uncountable field of characteristic different from $2$,  a very general complete intersection $X_{23}\subset \CP^6_k$ of type $(2,3)$ does not admit a decomposition of the diagonal and hence is not retract rational.
While this can also be deduced from Theorem \ref{thm:method}, the case of complete intersections of type $(2,3)$ in $\CP^6$ is in fact much easier than the case of quartic fivefolds treated in this paper. 
Indeed, by \cite[Proof of Theorem 7.1]{NO}, such complete intersections admit a degeneration into a union of two varieties meeting along a rational variety, such that one component does not admit a decomposition of the diagonal and so \cite[Lemma 2.4]{totaro} applies to give the result.
In \cite{skauli}, Skauli proves an enhancement of this result by actually constructing examples over $\Q$, where due to additional singularities in the special fibre slightly more careful arguments are necessary.
Note that these lines of arguments do not work for quartic fivefolds,  where the `obstruction for rationality' lies really in the intersection of the components of the degeneration, and not in the components  themselves.
\end{remark}

\section{Notation and conventions}

\subsection{Conventions} \label{subsec:convention}
All schemes are separated.
A variety is an integral scheme of finite type over a field. 
If $X$ is a scheme over a ring $R$ and $A/R$ is ring extension, then we write $X_A:=X\times_RA$.
A very general point of an irreducible scheme is a closed point outside a countable union of proper closed subsets.
For an integral scheme $X$ over a field $k$, we write $k(X)$ or $\kappa(X)$ for the function field of $X$; we use the latter whenever we prefer to make the ground field of $X$ not explicit in our notation.

For an abelian group $G$, we write $G/2$ instead of $G\otimes \Z / 2\Z$.
If $G_1\to G_2$ is a group homomorphism of abelian groups, then we denote by abuse of notation $G_2/G_1:= \coker (G_1\to G_2)$.

\subsection{Decomposition of the diagonal}
Let $X$ be a variety over a field $k$.
We denote by $\CH_i(X)$ the Chow group of algebraic cycles of $X$ of dimension $i$. 
One says that $X$ admits a decomposition of the diagonal, if
\begin{align} \label{eq:Delta}
[\Delta_X]=[z\times_k X]+ [Z]\in \CH_{\dim(X)}(X\times_k X),
\end{align}
where $\Delta_X\subset X\times_kX$ denotes the diagonal, $z$ is a zero-cycle on $X$ and $Z$ is a cycle on $X\times_k X$ which does not dominate the second factor.
Let now $k(X)$ denote the fraction field of $X$ and write $X_{k(X)}:=X\times_k k(X)$. We denote by
\[
\delta_{X}\in \CH_0( X_{k(X)} )
\]
the zero-cycle on $X_{k(X)}$ that is induced by pulling back $\Delta_X$ via $X_{k(X)}\to X\times_k X$. 
By the localization sequence, \cite[Proposition 1.8]{fulton} (\ref{eq:Delta}) is equivalent to $\delta_X=z_{k(X)}\in \CH_0( X_{k(X)} )$.

A proper variety $X$ over $k$ is said to have universally trivial Chow group of zero-cycles, if for any field extension $F\supset k$, the degree map $\deg :\CH_0(X_F)\to \mathbb{Z}$ is an isomorphism. If $X$ is in addition geometrically integral and smooth (over $k$), then $X$ has universally trivial Chow group of zero-cycles if and only if $X$ admits a decomposition of the diagonal (see \cite[Proposition 1.4]{CTP}).

\subsection{Torsion order} \label{subsec:Tor(X)}
The torsion order $\Tor(X)$ of a proper $k$-variety is the smallest positive integer $N$ such that $N\cdot \Delta_X$ admits a decomposition as in (\ref{eq:Delta}), or, equivalently, such that $N\cdot \delta_X=z_{k(X)}\in \CH_0(X_{k(X)})$ for some zero-cycle $z\in \CH_0(X)$.
The torsion order is $\infty$ if no such integer exists, cf.\ \cite{CL,Sch-torsion}.

\begin{lemma} \label{lem:torsion}
Let $X$ be a smooth projective variety over an algebraically closed field $k$.
Assume that the torsion order of $X$ is finite and divisible by $e\geq 2$.
Then for any $z\in \CH_0(X)$, the following holds in $\CH_0(X_{k(X)})$:
$$
\delta_X-z_{k(X)}\not \equiv 0\mod e . 
$$
\end{lemma}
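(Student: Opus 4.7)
The plan is to derive a contradiction by producing a decomposition of $m \delta_X$, where $m := N/e$ and $N := \Tor(X)$, which would contradict the minimality of $N$. Suppose for contradiction that $\delta_X - z_{k(X)} = e \cdot w$ in $\CH_0(X_{k(X)})$ for some $z \in \CH_0(X)$ and $w \in \CH_0(X_{k(X)})$.

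First, I would lift $w$ to a class $W \in \CH_{\dim X}(X \times_k X)$ via the localization sequence, so that $w$ is the restriction of $W$ to the generic fiber of the second projection $p_2 \colon X \times_k X \to X$. By definition of $\Tor(X) = N$, there is also a decomposition
\[
N \Delta_X = z_0 \times_k X + Z_0 \quad \text{in } \CH_{\dim X}(X \times_k X),
\]
with $\deg(z_0) = N$ and $Z_0$ supported on $X \times_k D_0$ for some proper closed $D_0 \subsetneq X$.

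The key step is to exploit the composition of correspondences. Since $\Delta_X$ is the identity, $W \circ (N\Delta_X) = NW$ in $\CH_{\dim X}(X \times_k X)$, and using the decomposition this equals $W \circ (z_0 \times_k X) + W \circ Z_0$. A direct calculation in $X \times_k X \times_k X$ yields $W \circ (z_0 \times_k X) = \deg(w) \cdot (z_0 \times_k X)$, which has the form $u \times_k X$ with $u := \deg(w) \cdot z_0 \in \CH_0(X)$. For the second term, a dimension count for the proper intersection $W \cdot (D_0 \times_k X)$ gives dimension at most $\dim X - 1$, so its image under $p_2$ is a proper closed subset of $X$; hence $W \circ Z_0$ does not dominate the second factor. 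Restricting $NW = u \times_k X + W \circ Z_0$ to the generic fiber of $p_2$ yields $Nw = u_{k(X)}$ in $\CH_0(X_{k(X)})$.

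To conclude, I multiply the assumed equation $\delta_X - z_{k(X)} = ew$ by $m$ and substitute, obtaining
\[
m \delta_X = m z_{k(X)} + Nw = (m z + u)_{k(X)},
\]
which lies in the image of $\CH_0(X) \to \CH_0(X_{k(X)})$. By the minimality of $\Tor(X) = N$, this forces $N \leq m = N/e$, contradicting $e \geq 2$. The main technical point is the verification that $W \circ Z_0$ does not dominate the second factor, which reduces to a dimension count for the proper intersection of $W$ with $D_0 \times_k X$ using $\dim D_0 < \dim X$.
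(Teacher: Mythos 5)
Your argument is driven by the same key idea as the paper's: the decomposition of $N\Delta_X$ (with $N=\Tor(X)$) acts as a correspondence to show that $N$ times the class $w$ (the paper's $\epsilon$) already decomposes over $k(X)$, so that multiplying the assumed relation by $m=N/e$ produces a decomposition of $m\,\delta_X$, contradicting minimality. The paper executes this more directly: it keeps $\epsilon$ on $X_{k(X)}$, base-changes the identity $N\Delta_X = z'\times X + R$ to $k(X)$, and applies the resulting correspondence to $\epsilon$, using only that $R$ does not dominate the second factor (hence $R^*$ kills $\CH_0$). You instead lift $w$ to a class $W\in\CH_{\dim X}(X\times_k X)$ and compose correspondences in $X^3$, which buys nothing and creates a technical point you gloss over: the claim that $W\cdot(D_0\times_k X)$ has dimension at most $\dim X-1$ presupposes that $W$ meets $D_0\times_k X$ properly, but an arbitrary lift $W$ of $w$ may well have components contained in $D_0\times_k X$. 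This is fixable by the moving lemma on the smooth projective variety $X\times_k X$ (move $W$ within its rational equivalence class so that it meets $D_0\times_k X$ properly; flat pullback preserves rational equivalence, so the restriction to the generic fibre of $p_2$ is still $w$), but as written it is an unjustified step. Working directly with $\epsilon$ over $k(X)$ as the paper does avoids the lift altogether and keeps the moving-lemma input to the standard, elementary case of a zero-cycle on a smooth projective variety.
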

\begin{proof}
For a contradiction, assume that $\delta_X \equiv z_{k(X)} \mod e $.
Then there is a zero-cycle $\epsilon\in \CH_0(X_{k(X)})$ with $ \delta_X=z_{k(X)}+e\epsilon$.
By assumptions, there is also a positive integer $d$ such that $\Tor(X)=d\cdot e$. 
Hence, $ed\Delta_X=z'\times X+R$ for some zero-cycle $z'$ on $X$ and some cycle $R$ on $X\times X$ that does not dominate the second factor.
Base changing everything to $k(X)$, we get a similar decomposition of $ed\cdot \Delta_{X_{k(X)}}$.
Since $\Delta_{X_{k(X)}}$ acts as the identity on $\CH_0(X_{k(X)})$, we find that 
$$
ed\cdot \epsilon=ed\cdot \Delta_{X_{k(X)}}^\ast(\epsilon)= \deg(\epsilon)\cdot z'_{k(X)}.
$$
Hence,
$$
d \delta_X=d z_{k(X)}+ed\cdot \epsilon =z''_{k(X)}.
$$
for some zero-cycle $z''\in \CH_0(X)$.
This implies that $\Tor(X)$ divides $d$ and so $e=1$, which contradicts our assumptions.
This proves the lemma.
\end{proof}

\subsection{Chains of divisors} \label{subsec:chain}
We call a closed and reduced subscheme $D=\bigcup^n_{i=1} D_i$ of a scheme $X$ with irreducible components $D_i$ of pure codimension one a chain of divisors, if the scheme-theoretic intersections $D_{i-1}\cap D_i$ and $D_{i}\cap D_{i+1}$ are disjoint from each other in $D_i$ for all $1< i < n$ and if all the other intersections $D_i\cap D_j$ with $i\neq j$ are empty. 
We call $D=\bigcup D_i$ a chain of Cartier divisors, if the $D_i$'s are in addition Cartier in $X$.

\subsection{Semi-stable models}
Let $R$ be a discrete valuation ring with residue field $k$ and fraction field $K$. 
A proper flat $R$-scheme $\mathcal X\to \Spec R$ is called strictly semi-stable, 
if the special fibre $Y=\mathcal{X}\times_R k$ is a geometrically reduced simple normal crossing divisor on $\mathcal{X}$. 
In other words, the components of $Y$ are smooth Cartier divisors and the intersection of $r$ different components is either empty, or smooth of codimension $r$.  
The total space $\mathcal X$ of a strictly semi-stable $R$-scheme is automatically regular, because the special fibre is contained in the regular locus by assumption and $\mathcal X$ is proper over $R$, so that any point of the generic fibre specializes to a point of the special fibre.
The special fibre $Y$ is called a chain of Cartier divisors if $Y\subset \mathcal X$ is a chain of Cartier divisors in the sense of Section \ref{subsec:chain}.

\section{Chow-theoretic obstruction:  the generic fibre}

\subsection{The obstruction map}\label{subsec:obstruction}

\begin{definition}  \label{definition:Phi}
Let $R$ be a discrete valuation ring and let $\mathcal X\to \Spec R$ be a strictly semi-stable $R$-scheme with special fibre $Y$.
Let $Y_i$ with $i\in I$ be the irreducible components of $Y$ and let $\iota:Y\to \mathcal X$ and $\iota_i:Y_{i}\to \mathcal X$ denote the natural embeddings. 
We define $\Phi_{\mathcal X,Y_i}:\CH_{1}(Y)\longrightarrow  \CH_{0}(Y_{i}) $ to be the composition
\begin{align}  \label{def:Phi}
\Phi_{\mathcal X,Y_i} : \CH_1(Y)\stackrel{\iota_\ast}\longrightarrow\CH_1(\mathcal X) \stackrel{ \iota_i^\ast}\longrightarrow  \CH_{0}(Y_{i}) 
\end{align}
and we denote by $\Phi_{\mathcal X}$ the direct sum
\begin{align} \label{def:Phi_sum}
\Phi_{\mathcal X}:=\sum_{i\in I}\Phi_{\mathcal X,Y_i} :\CH_1(Y)\longrightarrow \bigoplus_{i\in I} \CH_{0}(Y_{i}) .
\end{align}
\end{definition}

We have the following simple but useful lemma, which shows that  $\Phi_{\mathcal X}$ depends only on the special fibre $Y$ of the $R$-scheme $\mathcal X$.

\begin{lemma}\label{lem:Phi} 
In the notation of Definition \ref{definition:Phi}, let $Y_{ij}:=Y_{i}\cap Y_j$, denote by $\iota_{ij}:Y_{ij} \to Y_{j}$ and $\iota_i:Y_i\to Y$ the natural inclusions, and write $\gamma_i|_{Y_{ji}}:=\iota_{ji}^\ast \gamma_i$ for  $\gamma_i\in \CH_1(Y_i)$.
\begin{enumerate}
\item For any $\gamma_i\in \CH_1(Y_i)$, we have
\begin{align*} 
 \Phi_{\mathcal X,Y_j}((\iota_{i})_\ast\gamma_i)=
 \begin{cases} (\iota_{ij})_\ast (\gamma_i|_{Y_{ji}}) \in  \CH_{0}(Y_{j}) ,\ \ &\text{for $j\neq i$},\\
 -\sum_{\substack{k\in I,\\ k\neq i}} (\iota_{ki})_\ast ( \gamma_i|_{Y_{ki}} ) \in  \CH_{0}(Y_{i}),  \ \ &\text{for $j= i$}.
 \end{cases}
\end{align*}

\item Let $\gamma=\sum_{i\in I}(\iota_i)_\ast \gamma_i\in \CH_1(Y)$.
Then
$$
\Phi_{\mathcal X,Y_i} (\gamma ) =\sum_{j\in I\setminus \{i\}} (\iota_{ji})_\ast \gamma_{j} |_{Y_{ji}} - \sum_{j\in I\setminus \{i\}}  (\iota_{ji})_\ast \gamma_{i} |_{Y_{ji}}  \in \CH_{0}(Y_{i}) .
$$
\end{enumerate} 
\end{lemma}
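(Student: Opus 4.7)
The plan is to prove (1) first and then deduce (2) from it by linearity. Throughout, I abuse notation and also write $\iota_i\colon Y_i\to \mathcal X$ for the composition $Y_i\hookrightarrow Y \hookrightarrow \mathcal X$; with this identification, the quantity to compute becomes $\Phi_{\mathcal X,Y_j}((\iota_i)_\ast\gamma_i) = \iota_j^\ast (\iota_i)_\ast\gamma_i\in \CH_0(Y_j)$. Since strict semi-stability forces $\mathcal X$ to be regular and each $Y_k$ to be a smooth Cartier divisor, the maps $\iota_i,\iota_j$ are regular embeddings and the standard Chow machinery applies.

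For the case $j\neq i$, I would use that strict semi-stability guarantees that $Y_i$ and $Y_j$ meet transversally along the smooth scheme-theoretic intersection $Y_{ij}$, of codimension two in $\mathcal X$. The diagram
$$
\xymatrix{
Y_{ij}\ar[r]^-{\iota_{ji}}\ar[d]_-{\iota_{ij}} & Y_i\ar[d]^-{\iota_i} \\
Y_j\ar[r]^-{\iota_j} & \mathcal X
}
$$
is a transverse fibre square of regular embeddings with vanishing excess normal bundle. The base change formula of Fulton therefore gives $\iota_j^\ast (\iota_i)_\ast\gamma_i = (\iota_{ij})_\ast \iota_{ji}^\ast \gamma_i = (\iota_{ij})_\ast(\gamma_i|_{Y_{ji}})$, which is exactly the first formula in (1).

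For the self-intersection case $j=i$, the self-intersection formula gives $\iota_i^\ast (\iota_i)_\ast\gamma_i = c_1(N_{Y_i/\mathcal X})\cap \gamma_i$ with $N_{Y_i/\mathcal X}\simeq \mathcal O_{\mathcal X}(Y_i)|_{Y_i}$. The decisive point is that the total special fibre $Y=\sum_k Y_k$ is principal on $\mathcal X$, being cut out by the pullback of a uniformizer of $R$ along $\mathcal X\to \Spec R$. Consequently $\mathcal O_{\mathcal X}(\sum_k Y_k)\simeq \mathcal O_{\mathcal X}$, which forces $\mathcal O_{\mathcal X}(Y_i)|_{Y_i}\simeq \mathcal O_{Y_i}(-\sum_{k\neq i} Y_{ki})$ in $\Pic(Y_i)$, since $\mathcal O_{\mathcal X}(Y_k)|_{Y_i}$ is the line bundle of the Cartier divisor $Y_{ki}$ on $Y_i$. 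Intersecting with $\gamma_i$ and applying the projection formula $[Y_{ki}]\cdot \gamma_i = (\iota_{ki})_\ast(\gamma_i|_{Y_{ki}})$ then yields $\iota_i^\ast (\iota_i)_\ast\gamma_i = -\sum_{k\neq i}(\iota_{ki})_\ast(\gamma_i|_{Y_{ki}})$, completing (1).

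The main subtlety will be the transverse base change step in the case $j\neq i$: one has to verify that the scheme-theoretic intersection of $Y_i$ and $Y_j$ is indeed the smooth variety $Y_{ij}$ without embedded structure and that the excess bundle in the base-change square is trivial; both are immediate from strict semi-stability but must be stated carefully. Granting part (1), statement (2) follows by linearity: write $\gamma = \sum_i (\iota_i)_\ast \gamma_i$, apply $\Phi_{\mathcal X,Y_i}$ termwise, and separate the diagonal contribution (coming from $\gamma_i$ itself, giving the second sum with a minus sign) from the off-diagonal ones (coming from $\gamma_j$ with $j\neq i$, giving the first sum).
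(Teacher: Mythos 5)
Your proof is correct and follows the same route as the paper, which handles the case $j\neq i$ by the compatibility of Gysin maps with proper push-forward (Fulton, Theorem 6.2(a)) and the case $j=i$ via the relation $[Y_i]=-\sum_{k\neq i}[Y_k]$ in $\Pic(\mathcal X)$ coming from the triviality of $\mathcal O_{\mathcal X}(Y)$; you simply spell out the transverse base-change square and the self-intersection formula that the paper leaves implicit.
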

\begin{proof}
Note that the restrictions $\iota_{j i}^\ast$ are well-defined, because $Y$ is a simple normal crossing divisor on $\mathcal{X}$.
The first item follows then in the case $j\neq i$ directly from the definition of the intersection product (see \cite[Theorem 6.2 (a)]{fulton}), while the case $j=i$ follows from the fact that $[Y_i]=- \sum_{k\neq i}[Y_k]$ in $\Pic(\mathcal{X})$.
The second item is a direct consequence of the first.
\end{proof}

\subsection{Obstructing decompositions on the generic fibre}
The homomorphism $\Phi_{\mathcal X}$ from Definition \ref{definition:Phi} will be our main tool to obstruct the existence of decompositions of the diagonal in this paper.
This rests on two observations.
Firstly, if $\gamma\in \CH_1(Y)$, then 
$$
\deg(\Phi_{\mathcal X}(\gamma))=\deg \left( \sum_{i\in I} \iota_i^\ast \iota_\ast \gamma \right)=\deg(\iota^\ast \iota_\ast \gamma)=0
$$
and so the image of $\Phi_{\mathcal X}$ is always contained in the kernel of the degree map
$$
\deg:\bigoplus_{i\in I} \CH_{0}(Y_{i})\longrightarrow \Z,\ \ (z_i)_{i\in I}\mapsto \sum_{i\in I}\deg(z_i) .
$$
Secondly, whenever $A/R$ is an unramified extension of dvr's, then $\mathcal X_A:=\mathcal X\times_RA$ is a strictly semi-stable $A$-scheme.
In particular, if $L$ denotes the residue field of $A$ (which will be an extension of the residue field  of $R$), then we get from Definition \ref{definition:Phi} a homomorphism
\begin{align} \label{eq:Phi_X_A}
\Phi_{\mathcal X_A} :\CH_1(Y_L)\longrightarrow \ker\left( \deg: \bigoplus_{i\in I} \CH_{0}(Y_{i,L})\to \Z \right)  .
\end{align} 
Item (\ref{item:method:1}) in Theorem \ref{thm:method} follows from the following result.

\begin{proposition} \label{prop:Phi}
Let $R$ be a discrete valuation ring with residue field $k$ and fraction field $K$.
Let $\mathcal{X}\to \Spec R$ be a strictly semi-stable projective $R$-scheme whose generic fibre $X$ admits a decomposition of the diagonal.
Let $Y_i$ with $i\in I$ be the components of the special fibre $Y$.
Then for any unramified extension $A/R$ of dvr's,  the homomorphism in  (\ref{eq:Phi_X_A}) is surjective. 
 \end{proposition}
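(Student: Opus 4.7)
My plan is to use the decomposition of the diagonal on the generic fibre $X_A$, obtained from the hypothesis on $X$ by flat base change, to produce rational equivalences on the total space $\mathcal{X}_A$ that exhibit every degree-zero 0-cycle on the special fibre $Y_L$ as an element of $\Phi_{\mathcal{X}_A}(\CH_1(Y_L))$.

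First, since the cycle identity $[\Delta_X]=[z\times X]+[Z]$ in $\CH_n(X\times_k X)$ pulls back under flat base change to the corresponding identity on $X_A\times_{K_A}X_A$, the generic fibre $X_A$ admits a decomposition of the diagonal, so $\CH_0(X_A)\cong\Z$ via the degree map. I would then extend $\Phi_{\mathcal{X}_A}$ to the intersection map
\[
\Psi:\CH_1(\mathcal{X}_A)\longrightarrow\bigoplus_{i\in I}\CH_0(Y_{i,L}),\qquad W\mapsto (\iota_i^\ast W)_{i},
\]
which restricts to $\Phi_{\mathcal{X}_A}$ on the subgroup $\iota_\ast\CH_1(Y_L)$ of vertical 1-cycles. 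Let $\tau:\CH_1(\mathcal{X}_A)\to\CH_0(X_A)$ send a horizontal 1-cycle to its $0$-dimensional generic fibre (and a vertical one to zero); then $\deg\circ\Psi=\deg\circ\tau$.

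The central intermediate step is the localization-type identification
\[
\tau:\CH_1(\mathcal{X}_A)/\iota_\ast\CH_1(Y_L)\xrightarrow{\,\sim\,}\CH_0(X_A),
\]
proved by a standard Zariski-closure argument: surjectivity is immediate by taking closures of closed points of $X_A$; injectivity follows by extending any rational function $f$ on a curve $C\subset X_A$ witnessing the vanishing $\tau(W)=0$ to $\overline{C}\subset\mathcal{X}_A$ and observing that $\operatorname{div}_{\overline{C}}(f)$ realizes the horizontal part of $W$ modulo a 1-cycle supported on $Y_L$. Combined with $\CH_0(X_A)\cong\Z$, the quotient is identified with $\Z$ via $\deg\circ\tau$. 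Consequently, any $W\in\CH_1(\mathcal{X}_A)$ with $\deg\Psi(W)=0$ satisfies $\tau(W)=0$, hence $W\in\iota_\ast\CH_1(Y_L)$ and $\Psi(W)=\Phi_{\mathcal{X}_A}(\gamma)$ for the corresponding $\gamma$.

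It thus remains to show $\ker(\deg)\subseteq\operatorname{im}(\Psi)$, which I expect to be the main obstacle. For a basic generator $[y]_i-[y_0]_{i_0}$ of $\ker(\deg)$ (with $y\in Y_{i,L}$, $y_0\in Y_{i_0,L}$ closed points of equal residue degree over $L$), I would construct horizontal 1-cycles $W_y,W_{y_0}$ on $\mathcal{X}_A$ passing through $y$ and $y_0$ using Bertini-type arguments on the regular scheme $\mathcal{X}_A$, e.g., as general complete intersections of very ample divisors. When $y$ lies in the smooth locus $Y_{i,L}\setminus\bigcup_{j\neq i}Y_{j,L}$ of $Y_L$, such a $W_y$ can be arranged to meet $Y_L$ transversely exactly at $y$, giving $\Psi(W_y)=[y]_i$. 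For points on deeper strata $Y_J$ with $|J|\geq 2$, one reduces to the smooth-locus case using the relations $[y]_i\equiv[y]_j\pmod{\operatorname{im}(\Phi_{\mathcal{X}_A})}$ for $y\in Y_i\cap Y_j$, which follow from Lemma \ref{lem:Phi} applied to a 1-cycle in $Y_i$ meeting $Y_{ij}$ transversely at $y$.
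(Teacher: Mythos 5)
Your overall architecture matches the paper's: use universal triviality of $\CH_0$ of the generic fibre (coming from the decomposition of the diagonal on $X$) together with the localization exact sequence
$\CH_1(Y_L)\to\CH_1(\mathcal X_A)\to\CH_0(X_{\Frac A})\to 0$
to show that any horizontal class restricting to a degree-zero $0$-cycle on the generic fibre is already pushed forward from $\CH_1(Y_L)$. That part is correct and is essentially the paper's argument reorganized.

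The genuine gap is in the step you flagged as ``the main obstacle'': producing a horizontal $1$-cycle $W_y\subset\mathcal X_A$ meeting $Y_L$ \emph{exactly} at a prescribed closed point $y$ in the smooth locus. Bertini does not deliver this. A general complete intersection of relatively very ample divisors through $y$ will meet $Y_L$ in a $0$-cycle of degree $(\mathcal L)^{\dim Y_L}$, which is larger than $\deg(y)$; you get $\Psi(W_y)=[y]_i + (\text{uncontrolled extra terms})$, not $[y]_i$. There is also no good notion of a ``general'' divisor over the dvr $A$ (sections are an $A$-module, and genericity over $\Frac A$ does not help you control the reduction mod the maximal ideal). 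What makes the lift possible in the paper is a two-step move you omit: first pass to the \emph{completion} of $A$ (permissible because Lemma~\ref{lem:Phi} shows $\Phi_{\mathcal X_A}$ depends only on the special fibre $Y_L$, which is unchanged), and then invoke \emph{Hensel's Lemma} to lift a $0$-cycle $\sum z_i$ supported in the smooth locus of $Y_L$ (moved there by the moving lemma) to a finite flat $A$-subscheme of $\mathcal X_A$ whose intersection with $Y_L$ is exactly $\sum z_i$. Without henselianness this lift can fail, and without Hensel's Lemma (or an equivalent deformation/formal-smoothness statement) the claimed equality $\Psi(W_y)=[y]_i$ has no justification. Replace the Bertini step by: replace $A$ with its completion $\widehat A$ (Lemma~\ref{lem:Phi} ensures $\Phi_{\mathcal X_A}=\Phi_{\mathcal X_{\widehat A}}$), move the $z_i$ into the smooth locus of $Y_L$ by the moving lemma, and lift via Hensel's Lemma; the remainder of your argument then goes through.
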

 
\begin{proof}  
By Lemma \ref{lem:Phi},
 $\Phi_{\mathcal X_A}$ depends only on the special fibre $Y_L$ and hence it remains the same after replacing $A$ by its completion. 
In particular, we may assume that $A$ is complete.
Let $(z_i)_{i\in I}\in \oplus_{i\in I} \CH_{0}(Y_{i,L})$ be a collection of zero-cycles with $\sum_{i}\deg(z_i)=0$.
By the moving lemma (see e.g.\ \cite{Roberts} or \cite[Theorem 2.13]{levine}), we may assume that the support of each $z_i$ is contained in the smooth locus of $Y_L$.

Combining inflation of local rings (see Lemma \ref{lem:inflation-of-local-rings})  with Hensel's lemma  (see e.g. \ \cite[Theorem 18.5.17]{EGAIV.4}), it follows that any zero-cycle supported on the smooth locus of $Y_L$ lifts to a one-cycle on $\mathcal X_A$.
In particular, there is a one-cycle $z\in \CH_1(\mathcal X_A)$ that is flat over $A$ and such that
$$
z\cap Y_{i,L}=z_i
$$
for all $i\in I$.

The restriction of $z$ to the generic fibre of $\mathcal X_A\to \Spec A$ is then a zero-cycle of degree zero and so it is rationally equivalent to zero, because $X$ has universally trivial Chow group of zero-cycles by assumption.
The restriction map
$
\CH_1(\mathcal X_{A})\longrightarrow \CH_0(  X_{\Frac A})
$ fits into the following localization exact sequence (see \cite[Section 4.4]{fulton2}): 
$$
\CH_1(Y_{L})\stackrel{\iota_\ast}\longrightarrow \CH_1(\mathcal X_{A})\longrightarrow \CH_0(  X_{\Frac A})\longrightarrow 0
$$
and so we find that there is a one-cycle $\gamma\in \CH_1(Y_{L})$ with
\begin{align} \label{eq:gamma}
z= \iota_\ast \gamma \in \CH_1(\mathcal X_{A}).
\end{align}
Since $z\cap Y_{i,L}=z_i$ for all $i\in I$, this implies that
$$
\Phi_{\mathcal X_A}(\gamma)=(z_i)_{i\in I} \in \bigoplus_{i\in I} \CH_{0}(Y_{i,L}).
$$
Hence, $\Phi_{\mathcal X_A}$ in (\ref{eq:Phi_X_A}) is surjective, as we want. 
\end{proof}

\section{Chow-theoretic obstruction:  the geometric generic fibre}

The purpose of this section is to prove item (\ref{item:method:2}) of Theorem \ref{thm:method}.
This yields an obstruction to decompositions of the diagonal of a smooth projective variety which specializes to a chain of smooth varieties such as the union $Y=Y_1\cup Y_2$ of two smooth varieties meeting along a smooth irreducible divisor $Y_1\cap Y_2$.
We will apply this obstruction later to a degeneration of quartic fivefolds that is similar to the one in \cite{NO}.
The precise statement of our obstruction is as follows.

\begin{theorem}\label{thm:Phi-geom-gen} 
Let $R$ be a discrete valuation ring with algebraically closed residue field and let $\mathcal{X}\to\Spec R$ be a strictly semi-stable projective $R$-scheme whose special fibre $Y=\bigcup_{i\in I} Y_i$ is a chain of Cartier divisors. 
Assume that the geometric generic fibre of $\mathcal{X}\to \Spec R$ has a decomposition of the diagonal.
Then for any unramified extension $A/R$ of dvr's,  with induced extension $L/k$ of residue fields, the natural map
$$
 \Phi_{\mathcal X_A} :\CH_1(Y_L)/2\longrightarrow \ker\left( \deg: \bigoplus_{i\in I} \CH_{0}(Y_{i,L})/2\to \Z/2 \right) ,
$$
given by reduction modulo two of (\ref{eq:Phi_X_A}), is surjective. 
\end{theorem}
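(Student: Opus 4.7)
The plan is to reduce the geometric generic fibre setting to the generic fibre statement of Proposition \ref{prop:Phi} via a totally ramified base change together with a semi-stable resolution, and then to extract the mod-$2$ statement by a careful push-forward analysis along the resolution. The hypothesis that the dual graph of $Y$ is a chain is crucial in the final step.

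First I would pass to a finite extension $K'/K$ (with $K = \Frac R$) over which the decomposition of the diagonal of $X_{\bar K}$ is already defined; such $K'$ exists since decompositions of the diagonal spread out from the algebraic closure to a finite extension. Because the residue field $k$ of $R$ is algebraically closed, $K'/K$ is necessarily totally ramified of some degree $e \geq 1$, so $R' := \mathcal O_{K'}$ is a DVR with residue field $k$ and uniformizer $\pi'$ satisfying $\pi = u \cdot (\pi')^e$ for some unit $u$. The base change $\mathcal X_{R'} \to \Spec R'$ acquires $A_{e-1}$-singularities along each codimension-two stratum $Y_{i,i+1}$, and resolving these produces a strictly semi-stable $R'$-scheme $\mathcal X' \to \Spec R'$ whose special fibre $Y'$ is again a chain of Cartier divisors: $Y'$ is obtained from $Y$ by inserting, between the strict transforms $\widetilde Y_i \cong Y_i$ and $\widetilde Y_{i+1}$, a sub-chain of $e-1$ new exceptional components $\widetilde E_{i,1}, \ldots, \widetilde E_{i,e-1}$, each a $\mathbb{P}^1$-bundle over $Y_{i,i+1}$. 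Setting $A' := A \otimes_R R'$, which is a totally ramified extension of $A$ of degree $e$ with residue field still $L$, Proposition \ref{prop:Phi} applies to $\mathcal X'$ and gives that $\Phi_{\mathcal X'_{A'}}$ is surjective.

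Given a target $(z_i)_{i \in I}$ of total degree zero, the next step is to lift it to an admissible target $(z_i, u_{i,j})$ on the refined chain, with the auxiliary components $u_{i,j} \in \CH_0(\widetilde E_{i,j,L}) \cong \CH_0(Y_{i,i+1,L})$ remaining free parameters to be fixed later, to produce a preimage $\gamma' \in \CH_1(Y'_L)$ via Proposition \ref{prop:Phi}, and to push it forward to $\gamma := (h_L)_* \gamma' \in \CH_1(Y_L)$, where $h\colon \mathcal X'_{A'} \to \mathcal X_A$ is the composition of the resolution with the base change. Decomposing each $\gamma'|_{\widetilde E_{i,j}}$ via the projective bundle formula as $\pi^* \alpha_{i,j} + \sigma_* \beta_{i,j}$ (with $\alpha_{i,j} \in \CH_0(Y_{i,i+1,L})$ and $\beta_{i,j} \in \CH_1(Y_{i,i+1,L})$), applying Lemma \ref{lem:Phi}, and exploiting the normal-bundle identity $c_1(N_{Y_{i-1,i}/Y_{i-1}}) + c_1(N_{Y_{i-1,i}/Y_i}) = 0$ on $Y_{i-1,i}$ (which follows from the triviality of $\mathcal O_{\mathcal X}(\sum_k Y_k)$ upon restriction, all non-adjacent terms vanishing by the chain hypothesis), one arrives at an explicit formula of the form
\[
\Phi_{\mathcal X_A,\, Y_{i,L}}(\gamma) \;=\; e \cdot z_i \;+\; C_i,
\]
where $C_i \in \CH_0(Y_{i,L})$ is a correction term, supported on $Y_{i-1,i,L} \cup Y_{i,i+1,L}$, depending linearly on the $u_{i,j}$'s and $\beta_{i,j}$'s.

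The heart of the argument, and the main obstacle, is to show that the free parameters $u_{i,j}$ and $\beta_{i,j}$ can always be chosen so that $\Phi_{\mathcal X_A}(\gamma) \equiv (z_i) \pmod 2$ holds simultaneously for every $i$. When $e$ is odd this is automatic (taking all free parameters to be zero suffices), so the substance is concentrated in the case $e$ even, where the leading term $e\cdot z_i$ vanishes modulo $2$ and one must instead produce $z_i$ modulo $2$ as a boundary-supported $0$-cycle on $Y_{i,L}$ through the correction $C_i$. Here the chain hypothesis is used crucially: the correction terms telescope naturally along the chain of exceptional $\mathbb{P}^1$-bundles, whose projective bundle structure provides self-intersection contributions of precisely the shape needed to absorb the mod-$2$ discrepancy. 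Moreover, Proposition \ref{prop:Phi} applied to $\mathcal X'$ forces each $z_i$ to admit a representative in the image of $\CH_0(Y_{i-1,i,L}) \oplus \CH_0(Y_{i,i+1,L}) \to \CH_0(Y_{i,L})$, guaranteeing that the required right-hand sides of the congruences lie in the correct boundary subgroup so that a boundary-supported correction can indeed match them. The delicate part will be the mod-$2$ bookkeeping of signs and coefficients in the telescoping sum, and verifying that a consistent choice of the parameters exists independently of the parity of $e$.
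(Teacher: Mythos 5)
Your overall framework matches the paper's: pass to a finite extension over which the decomposition spreads out, produce a Hartl-type strictly semi-stable resolution whose new components are $\CP^1$-bundles over the strata $Y_{i,i+1}$, apply Proposition~\ref{prop:Phi} upstairs, and compare push-forwards. However, the part you flag as ``the heart of the argument'' is left as an admitted gap, and the claims you make about it do not hold up, whereas the paper closes precisely this gap by a clean device you do not find.

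Three concrete issues. First, your formula
$\Phi_{\mathcal X_A,\,Y_{i,L}}(\gamma)=e\cdot z_i+C_i$
is not correct. The residue field of $A'=A\otimes_R R'$ is still $L$, so $h_L\colon Y'_L\to Y_L$ is just a proper birational map contracting the inserted $\CP^1$-bundles, and $(h_L)_*$ on $\CH_1$ does not introduce a factor of the ramification degree $e$. The actual discrepancy between $\Phi_{\mathcal X_A}\circ (h_L)_*$ and $(h_L)_*\circ\Phi_{\mathcal X'_{A'}}$ is a boundary-supported correction that depends on the parity of the \emph{number of inserted components per edge} (the $r$ of \eqref{eq:base-change-ramified-extension-special-fibre}), not on $e$. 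Second, and related: the parity that matters is that of $r$, not $e$, and the paper arranges $r$ to be even simply by performing one more base change; there is no need for the case split you propose, and in particular your assertion that ``$e$ odd is automatic'' is unsubstantiated. Third, you propose to leave the targets $u_{i,j}$ on the inserted components as free parameters, and also to tune the $\beta_{i,j}$, but the $\beta_{i,j}$ are pinned down (up to ambiguity) once a preimage $\tilde\gamma$ is chosen, so treating them as independent tuning knobs would need justification you do not give. The paper avoids this entirely: it sets the target to zero on the inserted components, invokes Proposition~\ref{prop:Phi} integrally (lifting the mod-$2$ class to a degree-zero cycle), and then observes that the vanishing of $\Phi_{\tilde{\mathcal X}_A,R_{i,j,L}}(\tilde\gamma)$ mod $2$ on the middle components forces (because of the $-2\alpha_{i,j}$ term coming from the $\CP^1$-bundle) the congruences $\alpha_{R_{i,j-1}}\equiv\alpha_{R_{i,j+1}}\pmod 2$; for $r$ even these telescope to $\alpha_{R_{i,r}}\equiv\gamma_{Y_i}|_{Y_{i,i+1}}$ and $\alpha_{R_{i,1}}\equiv\gamma_{Y_{i+1}}|_{Y_{i,i+1}}$, which is exactly what makes $q_*\Phi_{\tilde{\mathcal X}_A}(\tilde\gamma)\equiv\Phi_{\mathcal X_A}(q_*\tilde\gamma)\pmod 2$ (Proposition~\ref{prop:Phi-and-base-change}). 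This is precisely the ``delicate mod-$2$ bookkeeping'' you anticipate but do not carry out. A smaller point: you should first replace $R$ by its completion so that $\mathcal O_{K'}$ is actually a DVR (otherwise the integral closure may be only a Dedekind domain); the paper does this at the start, justified by the observation that $\Phi_{\mathcal X}$ depends only on the special fibre (Lemma~\ref{lem:Phi}).
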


In applications it will be useful to note that by inflation of local rings, an unramified extension $A/R$ of dvr's exists for any given extension $L/k$ of residue fields.

\begin{lemma} \label{lem:inflation-of-local-rings}
Let $R$ be a discrete valuation ring with residue field $k$.
For any field extension $L/k$, there is an unramified extension of dvr's  $A/R$ which   induces $L/k$ on the residue fields.
\end{lemma}
\begin{proof}
By inflation of local rings, see 
\cite[Chapter IX,  Appendice,  Corollaire du Th\'eor\`eme 1]{bourbaki},
there is a flat local $R$-algebra $A$ such that $A$ is a discrete valuation ring with $\mathfrak m_A = \mathfrak m_R R$ and $L=A/\mathfrak m_A$.
In particular, $A/R$ is an unramified extension of dvr's which induces $L/k$ on the residue fields, as we want.
\end{proof}

Theorem \ref{thm:Phi-geom-gen} will follow from Proposition \ref{prop:Phi} together with a careful analysis of the effect of ramified base changes $R \subset \tilde{R}$ of discrete valuation rings.

\subsection{A preliminary lemma}
The following simple lemma will be needed in the proof of Theorem \ref{thm:Phi-geom-gen}.

\begin{lemma}\label{lem:CH_1Y}
Let $\mathcal{X}\to\Spec R$ be a strictly semi-stable $R$-scheme, such that the special fibre $Y=\bigcup_{i=1}^n Y_i$ of $\mathcal{X}$ is a chain of Cartier divisors.
Let $Y_{i,i+1}:=Y_i\cap Y_{i+1}$ 
 and let $A/R$ be an unramified extension of dvr's with induced extension $L/k$ of residue fields.
Then we have the following, where
on the right hand side of \eqref{eq:sum-cycles}, \eqref{eq:CH_1Y-2}, \eqref{eq:CH_1Y} and \eqref{eq:Phi-computation1} below, we leave out the respective pushforward maps by the natural inclusions  by slight abuse of notation.
\begin{enumerate}[(a)] 
\item \label{item:lem:CH_1Y-1} Let 
\begin{equation}\label{eq:sum-cycles}
\gamma=\gamma_{1}+\ldots + \gamma_{n}\in \CH_1( Y_L ) ,
\end{equation}
such that $\gamma_{i}$ is in $\CH_1 (Y_{i , L})$. 
Then for all $i$, we have 
\begin{equation}\label{eq:CH_1Y-2}
\Phi_{\mathcal X_A,Y_{i,L}} (\gamma ) = \gamma_{i-1} |_{Y_{i-1 , i , L}} - \gamma_{i} |_{Y_{i-1, i, L}} - \gamma_{i} |_{Y_{i , i+1, L}} + \gamma_{i+1} |_{Y_{i, i+1, L}} \in \CH_{0}(Y_{i,L}).
\end{equation} 
\item \label{item:lem:CH_1Y-2}
  If, additionally, for some $0 < i < n$ the intersections $Y_{i-1 ,i}$ and $Y_{i,i+1}$ are isomorphic and if $Y_i$ is a $\CP^1_k$-bundle over $Y_{i-1,i}\cong Y_{i, i+1}$ with projection $q_{i}:Y_i \to Y_{i-1 , i}$, then any $\gamma\in \CH_l (Y_L)$ is of the form
\begin{equation}\label{eq:CH_1Y}
\gamma = \gamma_{1}+\ldots + \gamma_{{i-1}} + q^\ast_{i}\alpha_{i} + \gamma_{{i+1}} +\ldots + \gamma_{n } \in \CH_l (Y_L)
\end{equation}
for some $\alpha_{i} \in \CH_{0}(Y_{i-1, i, L})$ and $\gamma_j\in \CH_{1}(Y_{j, L})$, and we have that
\begin{equation}\label{eq:Phi-computation1}
\Phi_{\mathcal X_A,Y_{i,L}} ( \gamma ) = \gamma_{{i-1}} |_{Y_{i-1, i, L}} - 2\alpha_{i} + \gamma_{{i+1}} |_{Y_{i, i+1, L}} \in \CH_0(Y_{i,L}).
\end{equation} 
\end{enumerate}
\end{lemma}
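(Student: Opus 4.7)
The plan is to deduce part (\ref{item:lem:CH_1Y-1}) directly from Lemma \ref{lem:Phi}(2), and to establish part (\ref{item:lem:CH_1Y-2}) by combining the projective bundle formula with the ambiguity of how cycles supported on the intersection $Y_{i-1,i}$ are distributed between the components $Y_{i-1}$ and $Y_i$ in the presentation of $\CH_1(Y_L)$.

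For part (\ref{item:lem:CH_1Y-1}), I invoke Lemma \ref{lem:Phi}(2) after noting that the chain-of-divisors hypothesis forces $Y_i\cap Y_j=\emptyset$ whenever $|i-j|\geq 2$. Only the terms with $j\in\{i-1,i+1\}$ survive in the two sums appearing there, and reading off the four remaining contributions (with the convention that $\gamma_0=\gamma_{n+1}=0$ and the corresponding intersections empty, at the ends of the chain) produces exactly \eqref{eq:CH_1Y-2}.

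For part (\ref{item:lem:CH_1Y-2}), I first justify the normal form \eqref{eq:CH_1Y} by a moving argument on the component $Y_{i,L}$. Starting from any presentation $\gamma=\sum_j (\iota_j)_{\ast}\gamma_j$, the projective bundle formula for $q_i:Y_{i,L}\to Y_{i-1,i,L}$ gives
$$
\gamma_i = q_i^{\ast}\alpha_i + h\cdot q_i^{\ast}\beta_i\in\CH_1(Y_{i,L})
$$
with $\alpha_i\in\CH_0(Y_{i-1,i,L})$, $\beta_i\in\CH_1(Y_{i-1,i,L})$, and $h=c_1(\OO_{Y_i}(1))$. Because the section $Y_{i-1,i,L}\hookrightarrow Y_{i,L}$ represents $h$ up to a $q_i^{\ast}$-class on the base, the second summand can, after adjusting $\alpha_i$, be rewritten as the pushforward $(\iota_{i-1,i}^{Y_i})_{\ast}\beta_i$ of $\beta_i$ along this section. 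When transported to $\CH_1(Y_L)$, that pushforward factors equally through the embedding $Y_{i-1,i,L}\hookrightarrow Y_{i-1,L}$, so it may be absorbed into $\gamma_{i-1}$. This realises \eqref{eq:CH_1Y}. Once $\gamma$ is in that form, \eqref{eq:Phi-computation1} is immediate from part (\ref{item:lem:CH_1Y-1}): the two divisors $Y_{i-1,i,L}$ and $Y_{i,i+1,L}$ are both sections of $q_i$, so $q_i^{\ast}\alpha_i$ restricts to $\alpha_i$ on each (under the identification $Y_{i-1,i}\cong Y_{i,i+1}$), and substituting $\gamma_i=q_i^{\ast}\alpha_i$ into \eqref{eq:CH_1Y-2} produces $\gamma_{i-1}|_{Y_{i-1,i,L}}-2\alpha_i+\gamma_{i+1}|_{Y_{i,i+1,L}}$.

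The main obstacle I expect is the moving step in part (\ref{item:lem:CH_1Y-2}): keeping track of the class $h$ relative to the specific section $Y_{i-1,i}\subset Y_i$, and confirming that the ``extra'' piece $h\cdot q_i^{\ast}\beta_i$ is genuinely represented by a cycle supported on $Y_{i-1,i}$ so that it can be reallocated to the neighbouring component $Y_{i-1}$ without altering $\gamma\in\CH_1(Y_L)$. Beyond this, everything is bookkeeping with intersections on a chain.
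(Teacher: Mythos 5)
Your proposal is correct and follows essentially the same route as the paper: part (a) is read off from Lemma \ref{lem:Phi} after discarding the vanishing chain intersections, and part (b) uses the projective bundle formula to write $\gamma_i = q_i^\ast\alpha_i + (\iota_{i-1,i})_\ast\beta_i$ and then absorbs the section-supported summand into $\gamma_{i-1}$. The only cosmetic difference is that you pass through the $c_1(\mathcal{O}(1))$-form of the projective bundle formula and then trade $h\cdot q_i^\ast\beta_i$ for the section pushforward (adjusting $\alpha_i$ by $q_i^\ast$ of a divisor-intersection), whereas the paper invokes the section form directly, citing the same source; the content is identical.
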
 

\begin{proof}
Item (\ref{item:lem:CH_1Y-1}) follows directly from Lemma \ref{lem:Phi}.
Moreover, (\ref{eq:Phi-computation1}) follows from (\ref{eq:CH_1Y})  and item (\ref{item:lem:CH_1Y-1}) and so it suffices to prove (\ref{eq:CH_1Y}).
That is, we need to show that $\CH_1(Y_L)$ is generated by 
\begin{equation}\label{eq:genCH1(Y0)}
\CH_1(Y_{1,L})\oplus\ldots \oplus \CH_1( Y_{i-1,L})\oplus q^\ast_{i}\CH_{0}(Y_{i,i-1, L})\oplus \CH_1(Y_{i+1, L})\oplus\ldots\oplus \CH_1(Y_{n,L}) .
\end{equation}

As $Y_{i,L}$ is a $\CP_k^1$-bundle over $Y_{i-1,i,L}$ and as $\iota_{i-1, i} : Y_{i,i-1,L}\hookrightarrow Y_{i, L}$ is a section of $q_{Y_{i}}: Y_{i, L}\to Y_{i-1,i,L}$, we can write any $x$ in $\CH_1(Y_{i,L})$ as 
\[
x= q_{{i}}^\ast (x_0 ) + (\iota_{i-1,i})_\ast (y)
\]
with $x_0$ in $\CH_{0}(Y_{i-1,i,L})$ and $y$ in $\CH_1(Y_{i-1,i,L})$ (see e.g.\ \cite[Theorem 3.3 (b)]{fulton}). 
It is clear, however, that $ (\iota_{i-1,i})_\ast (y)$ in $\CH_1(\tilde{Y}_L )$ is an element which comes from $\CH_1(Y_{i-1,L})$.
This proves \eqref{eq:genCH1(Y0)}, which concludes the lemma. 
\end{proof}

\subsection{Analyses of base change}

Let $\tilde{R}/ R$ be a finite (possibly ramified) extension of discrete valuation rings and let $\mathcal X\to \Spec R$ be a strictly semi-stable $R$-scheme.
We consider the base change $\mathcal{X}_{\tilde{R}}:=\mathcal{X}\times_R\tilde{R}$. 
Following Hartl (Proof of \cite[Proposition 2.2]{hartl}), there is a finite sequence of blow-ups
\[
\tilde{\mathcal{X}}:=\mathcal{X}_r\to\mathcal{X}_{r-1}\to \ldots\to\mathcal{X}_1\to\mathcal{X}_{\tilde{R}} ,
\]
where each step $\mathcal{X}_{i+1}\to\mathcal{X}_i$ is the consecutive blow-up of (strict transforms of) all irreducible components of $Y$ that are not Cartier, such that 
\begin{equation}\label{eq:base-change-ramified-extension}
\tilde{\mathcal{X}}\to \Spec\tilde{R} 
\end{equation}
is strictly semi-stable. 
In particular, 
the special fibre $\tilde{Y}$ of $\tilde{\mathcal{X}}$ is given by
\begin{equation}\label{eq:base-change-ramified-extension-special-fibre}
\tilde{Y} = Y_1\cup\bigcup_{j=1}^r R_{1,j}\cup Y_2 \cup\bigcup_{j=1}^r R_{2,j}\cup\ldots\cup Y_n .
\end{equation}
Note that $\tilde{Y}$ is again a chain of non-singular Cartier divisors and there is a $\CP^1_k$-bundle structure of $R_{i,j}$ over $Y_{i,i+1}$ for all $j>0$ and all $i$, and we denote the corresponding projection by $q_{R_{i,j}}:R_{i,j}\to Y_{i,i+1}$. 
Moreover, the intersections $Y_i\cap R_{i , 1}$ and $R_{i,j}\cap R_{i,j+1}$ are isomorphic to $Y_{i,i+1}$, for all $1\leq i< n$ and all $1\leq j \leq r$.

\begin{proposition}\label{prop:Phi-and-base-change}
Let $R$ be a discrete valuation ring with algebraically closed residue field $k$.
Let $\mathcal{X}\to\Spec R$ be a strictly semi-stable $R$-scheme such that the special fibre $Y=\bigcup_{i=1}^n Y_i$ of $\mathcal{X}$ is a chain of Cartier divisors. 
Let $R\subset \tilde{R}$ be a finite extension of discrete valuation rings and let $\tilde{\mathcal{X}}\to\Spec \tilde{R}$ be as in \eqref{eq:base-change-ramified-extension} with special fibre $\tilde{Y}$ as in \eqref{eq:base-change-ramified-extension-special-fibre}.
Let $A/R$ be an unramified extension of dvr's with induced extension $L/k$ of residue fields and denote by $\tilde{\mathcal X}_{\tilde A}$ the base change of $\tilde{\mathcal{X}}$ to an unramified extension $\tilde A$ of $\tilde R$ which induces $L/k$ on the residue fields ($\tilde A$ exists by Lemma \ref{lem:inflation-of-local-rings}).
Then the following holds:
If
$\Phi_{\widetilde{\mathcal X}_{\tilde A}}$ is surjective modulo $2$, then $\Phi_{ \mathcal X_A}$ is surjective modulo $2$. 
\end{proposition}

\begin{proof}
We will split the proof of the proposition into two cases, depending on the parity of the integer $r$ appearing in  \eqref{eq:base-change-ramified-extension-special-fibre}.
Before we do so, let us first make some general observations, which will be used in both cases. 

By Lemma \ref{lem:CH_1Y} (\ref{item:lem:CH_1Y-2}) \eqref{eq:CH_1Y}, any $\gamma \in\CH_1(\tilde{Y}_L)$ can be written as 
\[
\gamma=\gamma_{Y_1}+\sum_{j=1}^r q_{R_{1,j}}^\ast\alpha_{R_{1,j}} + \gamma_{Y_2} +\sum_{j=1}^r q^\ast_{R_{2,j}}\alpha_{R_{2,j}}+\ldots + \gamma_{Y_n} ,
\]
for some $\gamma_{Y_i}\in \CH_1(Y_{i,L})$ and $\alpha_{R_{i,j}}\in\CH_{0}(Y_{i,i+1,L})$, where $Y_{i,i+1,L}:=Y_{i,L}\cap Y_{i+1,L}$. 
We then have that 
\[
  \Phi_{\tilde{\mathcal X}_{\tilde A},R_{i,j,L}}(\gamma )\equiv \alpha_{R_{i,j-1}}- 2\alpha_{R_{i,j}} +  \alpha_{R_{i,j+1}}\equiv \alpha_{R_{i,j-1}} + \alpha_{R_{i,j+1}}\bmod 2
\]
in $\CH_{0}(R_{i, j, L})/2$ for all $1< j < r$ and all $i$, where in the first equality we use Lemma \ref{lem:CH_1Y} (\ref{item:lem:CH_1Y-2}) \eqref{eq:Phi-computation1}. 
(Here and in what follows we neglect, by slight abuse of notation, the respective pushforwards to $R_{i, j, L}$ and to $Y_{i,L}$ in our notation, whenever no confusion is likely to arise.)
Moreover, we have that 
\begin{align*}
 \Phi_{\tilde{\mathcal X}_{\tilde A},R_{i,1,L}}(\gamma) \equiv  \gamma_{Y_i} |_{Y_{i,i+1,L}} + \alpha_{R_{i,2}} \mod 2 , \\
 \Phi_{\tilde{\mathcal X}_{\tilde A},R_{i,r,L}} (\gamma) \equiv \alpha_{R_{i,r-1}}+ \gamma_{Y_{i+1}}|_{Y_{i,i+1,L}} \mod 2 ,
\end{align*}
by  Lemma \ref{lem:CH_1Y} (\ref{item:lem:CH_1Y-1}).

Let us now assume that  $\Phi_{\tilde{\mathcal X}_{\tilde A},R_{i,j,L}}(\gamma )\equiv 0\bmod 2$, for all $j> 0$ and all $i$.
Pushing the above equations forward via $q_{R_{i,j}}:R_{i,j,L}\to Y_{i,i+1,L}$ for $j>0$, we then obtain
\begin{equation}\label{eq:alpha_R_j}
\begin{aligned} 
\gamma_{Y_i} |_{Y_{i,i+1,L}} 
&\equiv \alpha_{R_{i,2}}\equiv\alpha_{R_{i,4}}\equiv\ldots\equiv\alpha_{R_{i, 2\lfloor r/2 \rfloor}}\mod 2 , \\
\alpha_{R_{i,r-2\lfloor r/2 \rfloor+1}}
&\equiv\ldots\equiv\alpha_{R_{i,r-3}}\equiv\alpha_{R_{{i,r-1}}}\equiv \gamma_{Y_{i+1}}|_{Y_{i,i+1,L}}\mod 2, \\
\alpha_{R_{i,1}}
&\equiv\alpha_{R_{i,3}}\equiv\ldots \equiv \alpha_{R_{{i,2\lceil r/2 \rceil-1}}}\mod 2 ,
 \end{aligned}
\end{equation}
in $\CH_0(Y_{i,i+1,L})/2$. 

\textbf{Case 1.} The integer $r$  appearing in  \eqref{eq:base-change-ramified-extension-special-fibre} is even.

In this case we will prove that for any $\gamma\in \CH_1(\tilde{Y}_L)$, such that $\Phi_{\tilde{\mathcal X}_{\tilde A},R_{i,j,L}}(\gamma )\equiv 0 \bmod 2$ for all $j$ and all $i$, we have that
\begin{equation}\label{eq:commutativity-mod-2}
\Phi_{\mathcal X_A} (q_\ast (\gamma ))=q_\ast \Phi_{\tilde{\mathcal X}_{\tilde A}}( \gamma )\ 
 \in\bigoplus_i\CH_{0}(Y_{i,L})/2 ,
\end{equation}
where $q:\tilde{Y}\to Y$ denotes the morphism induced by $\tilde{\mathcal X}\to \mathcal X$. 
This clearly implies that  if $\Phi_{\widetilde{\mathcal X}_{\tilde A}}$ is surjective modulo $2$, then $\Phi_{ \mathcal X_A}$ is surjective modulo $2$.

To prove \eqref{eq:commutativity-mod-2},  note first that since  $r$ is even,   \eqref{eq:alpha_R_j} reads as
\begin{equation}\label{eq:alpha_R_j_even}
\begin{aligned} 
\gamma_{Y_i} |_{Y_{i,i+1,L}} 
&\equiv \alpha_{R_{i,2}}\equiv\alpha_{R_{i,4}}\equiv\ldots\equiv\alpha_{R_{i, r }}\mod 2 , \\
\alpha_{R_{i,1}}
&\equiv\alpha_{R_{i,3}}\equiv\ldots\equiv\alpha_{R_{{i,r-1}}}\equiv \gamma_{Y_{i+1}}|_{Y_{i,i+1,L}}\mod 2 
 \end{aligned}
\end{equation}
in $\CH_0(Y_{i, i+1,L})/2$. Furthermore, we note that the morphism $q : \tilde{Y}\to Y$ is the identity morphism onto its image when restricted to the components $Y_i$ and is the projection $q_{R_{i,j}}:R_{i,j}\to Y_{i,i+1}$ when restricted to the components $R_{i,j}$. 
This, together with the assumption $\Phi_{ {\tilde{\mathcal X}}_{\tilde{A}},R_{i,j,L}} (\gamma )\equiv 0\bmod 2$ for all $j$ and all $i$, implies that 
\begin{align*}
q_\ast \Phi_{\tilde{\mathcal X}_{\tilde A}}(\gamma ) 
& = \sum_{i=1}^n   \alpha_{R_{i-1,r}} -  \gamma_{Y_i}|_{Y_{i-1 ,i,L}} - \gamma_{Y_i}|_{Y_{i ,i+1 ,L}} + \alpha_{R_{i,1}} \in \bigoplus_i \CH_0 ( Y_{i,L}) /2 ,
\end{align*}
where we used Lemma \ref{lem:CH_1Y} \eqref{item:lem:CH_1Y-1}.
(Here, $\alpha_{R_{0, r}}$, $\gamma_{Y_1}|_{Y_{0, 1, L}}$, $\gamma_{Y_n}|_{Y_{n, n+1, L}}$ and $\alpha_{R_{n, 1}}$ are defined to be $0$.)

Using then the relations in \eqref{eq:alpha_R_j_even}, we can further rewrite this equation as
\begin{align*}
 q_\ast \Phi_{\tilde{\mathcal X}_{\tilde A}}(\gamma ) 
&\equiv  \sum^n_{i=1}  \gamma_{Y_{i-1}}|_{Y_{i-1,i,L}} - \gamma_{Y_i}|_{Y_{i-1 ,i,L}} -  \gamma_{Y_i}|_{Y_{i ,i+1 ,L}} +  \gamma_{Y_{i+1}}|_{Y_{i,i+1,L}} \\
&\equiv \sum^n_{i=1} \Phi_{\mathcal X_A} ( \gamma_{Y_i})\equiv \Phi_{\mathcal X_A}  ( q_* \gamma )\mod 2 ,
\end{align*}
where in the second equation we used Lemma \ref{lem:CH_1Y} \eqref{item:lem:CH_1Y-1} again ($\gamma_{Y_0}|_{Y_{0, 1, L}}$ and $\gamma_{Y_{n+1}}|_{Y_{n, n+1, L}}$ are set to be $0$ here). 
This proves \eqref{eq:commutativity-mod-2} and hence concludes the proof of the proposition in the case where $r$ is even.

\textbf{Case 2.} The integer $r$  appearing in  \eqref{eq:base-change-ramified-extension-special-fibre} is odd.

We assume that  $\Phi_{\widetilde{\mathcal X}_{\tilde A}}$ is surjective modulo $2$ and that $r$ is odd.
We then need to show that $\Phi_{ {\mathcal X}_{ A}}$ is surjective modulo $2$.
A simple formula analogous to \eqref{eq:commutativity-mod-2} does not seem to hold in this case. 
Instead, our argument relies on the claim that, since $r$ is odd and $\Phi_{\widetilde{\mathcal X}_{\tilde A}}$ is surjective modulo $2$, we have that
\begin{align} \label{eq:surj-mod-2}
\Psi : \bigoplus_{i=1}^n \CH_1(Y_{i,L}) \longrightarrow \bigoplus_{i=1}^{n-1}\CH_0(Y_{i,i+1,L}),\ \ \ \sum_i \gamma_i \mapsto \sum_i \gamma_i |_{Y_{i,i+1,L}}+\gamma_{i+1}|_{Y_{i,i+1,L}}
\end{align}
is surjective modulo 2.
Indeed, let $\alpha' = \alpha'_1 + \ldots + \alpha'_{n-1}$ be in $\bigoplus_i \CH_0 (Y_{i , i+1, L})$ with $\alpha'_i $ in $\CH_0 (Y_{i, i+1, L})$.
Denote by $\iota_{{i }} : Y_{i , i+1 , L} \to Y_{i, L }$ the natural embedding and  let $\iota_{i , j } : Y_{i, i+1 , L} \to R_{i, j , L} $ denote the embeddings $Y_{i , L} \cap R_{i , 1, L} \subset R_{i , 1 , L}$ for $j=1$ and $R_{i , j-1 , L} \cap R_{i , j, L} \subset R_{i , j , L}$ for $1 < j \leq r$.
We then consider
$$
z : =  \sum_{i=1}^{n-1} {\iota_{i}}_\ast \alpha'_i - {\iota_{i , 1}}_\ast \alpha'_i \in \bigoplus_{i=1}^{n-1}  \CH_0 ( Y_{i , L} ) \oplus \CH_0 ( R_{i,1 , L} ) ,
$$
which by \eqref{eq:base-change-ramified-extension-special-fibre} is an element in the direct sum of the Chow groups of zero-cycles of the components of $\tilde Y_L$. 
It is clear from the definition of $z$ that $\deg ( z ) = 0$.
Hence, by surjectivity of $\Phi_{\widetilde{\mathcal X}_{\tilde A}}$ modulo $2$, there is a class
\[
\bar \gamma=\bar{\gamma}_{Y_1}+\sum_{j=1}^r q_{R_{1,j}}^\ast \bar{\alpha}_{R_{1,j}} + \bar{\gamma}_{Y_2} +\sum_{j=1}^r q^\ast_{R_{2,j}}\bar{\alpha}_{R_{2,j}}+\ldots + \bar{\gamma}_{Y_n} \in  \CH_1 ( \tilde{Y}_{ L} )/2  ,
\]
with $\bar{\gamma}_{Y_i}\in \CH_1 (Y_{i , L})/2$ and $\bar{\alpha}_{R_{i , j}}\in \CH_0 ( Y_{i , i +1, L})/2$, such that $\Phi_{\widetilde{\mathcal X}_{\tilde A} , R_{i , 1}}(\bar{\gamma} ) \equiv  - {\iota_{i , 1}}_\ast \alpha'_i \mod 2$ and $\Phi_{\widetilde{\mathcal X}_{\tilde A} , R_{i , j}}(\bar{\gamma} ) \equiv 0 \mod 2$ for $j>1$ an all $i$.

A similar computation as in \eqref{eq:alpha_R_j} and using Lemma \ref{lem:CH_1Y} \eqref{item:lem:CH_1Y-2} shows then that 
\begin{align*}
-\alpha'_i
&\equiv \bar{\gamma}_{Y_i}|_{Y_{i , i+ 1, L}} + \bar{\alpha}_{R_{i , 2}}\mod 2 \\
\bar{\alpha}_{R_{i ,2}}  
&\equiv \bar{\alpha}_{R_{i,4}} \equiv\ldots\equiv \bar{\alpha}_{R_{i, r-1 }}  \mod 2  \\
- \bar{\alpha}_{R_{i,r-1}}
&\equiv \bar{\gamma}_{Y_{i+1}}|_{Y_{i,i+1,L}}\mod 2 , 
\end{align*}
in $\CH_0( Y_{i, i+1, L})/2$, where we used the assumption that $r$ is odd.

Combining these equations, we obtain that $\Psi ( \sum \bar{\gamma}_{Y_i} ) = \alpha' $ modulo $2$ and so we showed that $\Psi$ from \eqref{eq:surj-mod-2} is surjective modulo $2$, as we want.

Let us now show that $\Phi_{\mathcal{X}_A}$ is surjective modulo $2$.
 Let $\beta_i\in \CH_0(Y_{i,L})/2$ for $i=1,\dots ,n$ with $\sum _i \deg(\beta_i)\equiv 0\mod 2$.
Since $\Phi_{\widetilde{\mathcal X}_{\tilde A}}$ is surjective modulo $2$, 
there is a class
$\gamma \in\CH_1(\tilde{Y}_L)$ with 
$$
\Phi_{\widetilde{\mathcal X}_{\tilde A},Y_{i,L}}(\gamma)=\beta_i\in \CH_0(Y_{i,L})/2
$$
for all $i=1,\dots ,n$
and
$$
\Phi_{\widetilde{\mathcal X}_{\tilde A},R_{i,j,L}}(\gamma)=0\in \CH_0(R_{i,j,L})/2
$$
for all $i=1,\dots ,n-1$ and $j=1,\dots ,r$.

As before, we can write 
\[
\gamma=\gamma_{Y_1}+\sum_{j=1}^r q_{R_{1,j}}^\ast\alpha_{R_{1,j}} + \gamma_{Y_2} +\sum_{j=1}^r q^\ast_{R_{2,j}}\alpha_{R_{2,j}}+\ldots + \gamma_{Y_n} ,
\]
for some $\gamma_{Y_i}\in \CH_1(Y_{i,L})$ and $\alpha_{R_{i,j}}\in\CH_{0}(Y_{i , i+1 , L})$.  
Then, as $r$ is odd, \eqref{eq:alpha_R_j} implies that
\begin{align}
\gamma_{Y_i} |_{Y_{i,i+1,L}} 
&\equiv \alpha_{R_{i,2}}\equiv\alpha_{R_{i,4}}\equiv\ldots\equiv\alpha_{R_{i, r-1 }} \equiv \gamma_{Y_{i+1}}|_{Y_{i,i+1,L}} \mod 2 , \label{eq:alpha_R_j_odd_1} \\
\alpha_{R_{i,1}}
&\equiv\alpha_{R_{i,3}}\equiv\ldots \equiv \alpha_{R_{i, r- 2 }}\equiv\alpha_{R_{{i,r}}}\mod 2 \label{eq:alpha_R_j_odd_2}
\end{align}
in $\CH_0( Y_{i, i+1, L})/2$, for $i=1,\dots ,n-1$.
Moreover, $\beta_i=\Phi_{\widetilde{\mathcal X}_{\tilde A},Y_{i,L}}(\gamma)$ modulo 2 means that
\begin{align} \label{eq:gamma-alpha-2}
\beta_1=\gamma_{Y_1}|_{Y_{1,2,L}} +\alpha_{R_{1,1}} \in \CH_0(Y_{1,L} )/2 ,
\end{align}
\begin{align} \label{eq:gamma-alpha-3}
\beta_{n} & =\gamma_{Y_n}|_{Y_{n-1,n,L}} +\alpha_{R_{n-1,r}} \notag \\
& =\gamma_{Y_n}|_{Y_{n-1,n,L}} +\alpha_{R_{n-1,1}} \in \CH_0(Y_{n,L} )/2 ,
\end{align}
and
\begin{align} \label{eq:gamma-alpha}
\beta_i & =\gamma_{Y_i}|_{Y_{i-1,i,L}} +\gamma_{Y_{i}}|_{Y_{i,i+1,L}}+\alpha_{R_{i-1,r}}+ \alpha_{R_{i,1}} \notag \\ 
& =\gamma_{Y_i}|_{Y_{i-1,i,L}} +\gamma_{Y_{i}}|_{Y_{i,i+1,L}}+\alpha_{R_{i-1,1}}+ \alpha_{R_{i,1}} \in \CH_0(Y_{i,L} )/2 ,
\end{align}
for all $2\leq i\leq n-1$.
Note that we used \eqref{eq:alpha_R_j_odd_2} in the second equation of \eqref{eq:gamma-alpha-3} and   \eqref{eq:gamma-alpha}.

By the surjectivity of $\Psi$ in (\ref{eq:surj-mod-2}), we get classes $\gamma'_{Y_i}\in \CH_1(Y_{i,L})$ with
\begin{align} \label{eq:gamma'}
\alpha_{R_{i,1}}=\gamma'_{Y_i}|_{Y_{i,i+1,L}}+\gamma'_{Y_{i+1}}|_{Y_{i,i+1,L}} \in \CH_0(Y_{i,i+1,L} )/2 ,
\end{align}
for all $i = 1 , \ldots , n-1$.
Combining this with (\ref{eq:gamma-alpha-2})--(\ref{eq:gamma-alpha}), we find
$$
\gamma_{Y_1}|_{Y_{1,2,L}} +\gamma'_{Y_1}|_{Y_{1,2,L}}+\gamma'_{Y_2}|_{Y_{1,2,L}}=\beta_1\in \CH_0(Y_{1,L} )/2 ,
$$
$$
\gamma_{Y_n}|_{Y_{n-1,n,L}} +\gamma'_{Y_{n-1}}|_{Y_{n-1,n,L}}+\gamma'_{Y_n}|_{Y_{n-1,n,L}}=\beta_n\in \CH_0(Y_{n,L} )/2 ,
$$
and
$$
\gamma_{Y_i}|_{Y_{i-1,i,L}} +\gamma_{Y_{i}}|_{Y_{i,i+1,L}}+
\gamma'_{Y_{i-1}}|_{Y_{i-1,i,L}}+\gamma'_{Y_{i}}|_{Y_{i-1,i,L}}
+ \gamma'_{Y_i}|_{Y_{i,i+1,L}}+\gamma'_{Y_{i+1}}|_{Y_{i,i+1,L}}=\beta_i \in \CH_0(Y_{i,L} )/2
$$
for $2\leq i\leq n-1$.
Let now 
$$
\gamma'=\sum_{i=1}^n \gamma'_{Y_i}\in \CH_1(Y_{L})/2\ \ \ \text{and}\ \ \ \gamma^{even}:=\sum_{j=1}^{ \lfloor n/2 \rfloor} \gamma_{Y_{2j}} \in  \CH_1(Y_{L})/2 .
$$
Using the relation $\gamma_{Y_i}|_{Y_{i,i+1,L}}=\gamma_{Y_{i+1}}|_{Y_{i,i+1,L}}$ modulo 2 from (\ref{eq:alpha_R_j_odd_1}), we then conclude from the above computation that
$$
\Phi_{\mathcal X_A,Y_{i,L}}(\gamma'+\gamma^{even})=\beta_i\in \CH_0(Y_{i,L})/2
$$
for all $i=1,\dots ,n$.
Hence, $\Phi_{\mathcal X_A}$ is surjective modulo 2, as we want.
This concludes the proof of the proposition.
\end{proof}

\subsection{Proof of Theorem \ref{thm:Phi-geom-gen}}

\begin{proof}[Proof of Theorem \ref{thm:Phi-geom-gen}] 
Since $\Phi_{\mathcal X}$ depends only on the special fibre (see Lemma \ref{lem:Phi}) and because the base change of $\mathcal{X}$ to the completion of $R$ remains a strictly semi-stable family,  we may replace $R$ by its completion and assume that $R$ is complete.
As the geometric generic fibre $X_{\bar{K}}$ of $\mathcal{X}\to\Spec R$ has a decomposition of the diagonal, it follows that there is a finite field extension $F\supset K$, such that $X_F$ has a decomposition of the diagonal.  

Let now $R_F$ be the integral closure of $R$ in $F$. 
Since $R$ is complete, $R_F$ is again a discrete valuation ring.
We consider the strictly semi-stable model $\tilde{\mathcal{X}}\to\Spec R_F$ constructed as in \eqref{eq:base-change-ramified-extension}, with special fibre $\tilde{Y}$ as described in \eqref{eq:base-change-ramified-extension-special-fibre} and with the induced morphism $\tilde{Y} \to Y$. 

The residue field $k$ of $R$ is algebraically closed by assumptions.
It follows that the residue field of $R_F$ is given by $k$ as well.
Let $A/R$ be any unramified extension of discrete valuation rings with induced extension $L/k$ of residue fields. 
We denote by $\tilde {\mathcal X}_{\tilde A}$ the base change of $\tilde {\mathcal X}$ to an unramified extension $\tilde A$ of $R_F$ that induces $L/k$ on residue fields (see Lemma \ref{lem:inflation-of-local-rings}).
By Proposition \ref{prop:Phi}, $\Phi_{\tilde {\mathcal X}_{\tilde A}}$ is surjective, hence surjective modulo 2.
It then follows from Proposition \ref{prop:Phi-and-base-change} that 
$\Phi_{\mathcal X_A}$ is surjective modulo 2 as well, as we want.
This concludes the proof of the theorem.
\end{proof}

\subsection{Proof of Theorem \ref{thm:method} and Corollary \ref{cor:method}}

\begin{proof}[Proof of Theorem \ref{thm:method}]
As aforementioned,  item (\ref{item:method:1}) of Theorem \ref{thm:method} follows from Proposition \ref{prop:Phi} and item (\ref{item:method:2}) follows from Theorem \ref{thm:Phi-geom-gen}.
\end{proof}

\begin{proof}[Proof of Corollary \ref{cor:method}]
 The main strategy here is to argue by contradiction. Indeed, we are going to assume that the geometric generic fibre has a decomposition of the diagonal and we want to conclude that $Y_{12}$ has odd torsion order, which contradicts the assumptions of the corollary. 

 Note first that the Chow group of $Y_{12}$ does not appear in the target of $\Phi_{\mathcal{X}}$ and so we need to modify our strictly semi-stable family. 
 More concretely, we  perform a $2:1$ base change and blow-up $Y_{12}$ to arrive at a strictly semi-stable model $\mathcal X'\to \Spec R'$ with special fibre $Y'=Y_1\cup P\cup Y_2$, where $q:P\to Y_{12}$ is a $\CP^1$-bundle that meets $Y_1$ and $Y_2$ along disjoint sections.
This implies $\CH_1(Y'_L)\cong \CH_1(Y_L)\oplus q^\ast \CH_0((Y_{12})_L)$, for any field extension $L/k$.
By construction of the above map, $ \Phi_{\mathcal X',P}$ is zero modulo two on $ q^\ast \CH_0((Y_{12})_L)$.
Since $\CH_1(Y)\to \CH_1(Y_L)$ is surjective by assumption, we conclude that for any unramified extension $A/R'$ of dvr's, 
\begin{align} \label{eq:im(Phi)-mod2}
 \im(\Phi_{\mathcal X'_{A},P_L})\equiv \im( \Phi_{\mathcal X',P}) \mod 2.
\end{align}
If the geometric generic fibre of $\pi$ admits a decomposition of the diagonal, then, by Theorem \ref{thm:method},  
$\Phi_{\mathcal X'_A,P_L}$ is  surjective modulo $2$ for $A$ the local ring of $\mathcal X'$ at the generic point of $P$.
By (\ref{eq:im(Phi)-mod2}), this implies that up to some multiples of $2$, $\delta_P$ is contained in $\im( \CH_0(P)\to \CH_0(P_{k(P)})$ and so $P$ has odd torsion order (see Lemma \ref{lem:torsion}).
Since the torsion order is a stable birational invariant  of smooth projective varieties, we find that $Y_{12}$ has odd torsion order as well, which contradicts our assumptions.
\end{proof}
\begin{remark}
The above proof shows that Corollary \ref{cor:method} remains true if for any field extension $L/k$, $\CH_1(Y)\to \CH_1(Y_L)$ is  surjective modulo $2$.
\end{remark}

\section{Very general quartic fivefolds have no decomposition of the diagonal}\label{sect:quartic-fivefolds-decomp-diag}

\subsection{Overview}
We aim to write down a smooth quartic fivefold $X\subset \CP^6$, which has no decomposition of the diagonal. 
We sketch our construction here, before we give the technical details below.  

As in \cite[Theorem 5.1]{NO}, we start by degenerating a general quartic fivefold to a union $Y_1 \cup Y_2$ of general quartic double covers of $\CP^5$ (see Subsection \ref{subsec:family}). 
The two components $Y_1$ and $Y_2$ are isomorphic to each other via an exchange of variables and they intersect each other in a general quartic double cover $Y_1\cap Y_2 = Z$ of $\CP^4$,  which degenerates to the double quartic fourfold $Z_0\to \CP^4$ studied by Hassett--Pirutka--Tschinkel \cite{
HPT2}. 
The total space $\mathcal X$ of this degeneration is singular along a codimension 1 subvariety $S$ of $Z$.

The blow-up $ \mathcal X':=Bl_{Y_2}\mathcal X$ resolves the singularities of the total space and its special fibre  is the union $Y_1 \cup \tilde{Y}_2$, where $\tilde{Y}_2$ is the blow-up of $Y_2$ along $S$. Moreover, this family is strictly semi-stable.
A further $2:1$ base-change, followed by a blow-up along $Z\simeq Y_1 \cap \tilde{Y}_2$, gives a strictly semi-stable family $\tilde{ \mathcal X}$ whose generic fibre is a general quartic fivefold and whose special fibre is a union $\tilde X_0=Y_1 \cup P_Z \cup \tilde{Y}_2$, where $P_Z$ is a $\CP^1$-bundle over $Z$ (see Lemma \ref{lem:main-family}).

Let $A$ be the local ring of $\tilde{ \mathcal X}$ at the generic point of $P_Z$ and with residue field $\kappa(P_Z)$, where we recall our convention  from Section \ref{subsec:convention} that the function field of an integral scheme  $X$ over a field  is denoted by $\kappa(X)$ whenever we prefer to make the ground field in our notation not explicit. 
Then $\tilde{ \mathcal X}_A\to \Spec A$ is strictly semi-stable and we get a map
$$
\Phi_{\tilde{\mathcal X}_A,P_Z}:\CH_1(\tilde X_0\times \kappa(P_Z))\longrightarrow \CH_0(P_Z\times \kappa(P_Z)) ,
$$ 
see Definition \ref{def:Phi}.
The main technical result of this section is then the assertion that for a zero-cycle $z\in \CH_0(P_Z)$ of degree one,  the zero-cycle
\begin{align}\label{eq:class-CH_0}
\delta_{P_Z}-z_{\kappa(P_Z)}\in \CH_0(P_Z\times \kappa(P_Z))
\end{align}
is not in the image of $\Phi_{\tilde{\mathcal X}_A}$ modulo 2, see Proposition \ref{prop:Obstruction-diagonal}.
By Theorem \ref{thm:Phi-geom-gen}, this implies that the geometric generic fibre of $\tilde{ \mathcal X}\to \Spec R$ has no decomposition of the diagonal, as we want.

We assume that (\ref{eq:class-CH_0}) is contained in the image of $\Phi_{\tilde{\mathcal X}_A}$ modulo 2, and aim to find a contradiction.
The strategy  is to repeatedly apply Fulton's specialization map on Chow groups (in the form of Lemma \ref{lem:specialization-functoriality} below) to simplify the contribution from $\CH_1(\tilde X_0\times \kappa(P_Z))$.
The goal will be to finally arrive at the conclusion that the diagonal point $\delta_{Z_0}\in \CH_0(Z_0\times \kappa(Z_0))$ satisfies
$$
\delta_{Z_0}\in \im(  \CH_0(Z_0 )\to  \CH_0(Z_0\times \kappa(Z_0))) \mod 2 .
$$ 
We will show (see Lemma \ref{lem:HPT-class})  that this contradicts some  properties of the non-trivial unramified cohomology class with $\Z/2$-coefficients on $Z_0$ from  \cite{HPT,HPT2}.

\subsection{A strictly semi-stable family}\label{subsec:family} 
Let $k_0$ be an algebraically closed field of characteristic different from $2$ and let 
$$
k:=\overline{k_0(\lambda, u,v,s)}
$$
be the algebraic closure of purely transcendental extension of $k_0$ of degree four.

Our construction relies on the following choices.

\begin{definition}\label{def:fs-gu}
Let $f,g \in k_0[z_0 , \ldots , z_4]$ be general homogeneous polynomials with $\deg(f)=4$ and $\deg(g)=3$. 
\begin{enumerate}
\item 
Let 
$$
f_s:=sf+f_0 \in  k [z_0 , \ldots , z_4 ]
$$
where
\begin{equation}\label{eq:HPT}
f_0 := z_0 z_1 z_3^2 + z_0 z_2 z_4^2 + z_1 z_2 (z_0^2 + z_1^2 + z_2^2 - 2( z_0 z_1 + z_0 z_2 + z_1 z_2)) .
\end{equation} 
\item Let \begin{align}\label{def:gu}
g_u:=ug+z_2^3\in k[z_0 , \ldots , z_4].
\end{align} 
\item For $f_s$ and $g_u$ as above, we define
\begin{align}\label{def:F}
F :=F_{v,u,s}:= v (z_5^4 + z_6^4) + (z_5 + z_6 ) g_u + f_s \in k[z_0 , \ldots , z_6 ] ,
\end{align}
which is symmetric in $z_5 $ and $z_6$.
\end{enumerate}
\end{definition}

\begin{lemma}\label{lem:barX_1}
Let $f_s$ and $g_u$ be as in Definition \ref{def:fs-gu}.
Then
$$
\{f_s=0\}\subset \CP^4_k, \ \ \{g_u=0\}\subset \CP^4_k,\ \ \ \text{and}\ \ \{ f_s =  g_u = 0 \} \subset \CP^4_{k}
$$
are smooth complete intersections.
Moreover,
\begin{equation}\label{eq:equation-of-Y_1}
D_1 := \{ v z_5^4 + z_5 g_u + f_s = 0 \} \subset \CP^5_{ k}
\end{equation}
is smooth and
\begin{equation}\label{eq:equation-of-singular-X_1}
\bar{D}_1 := \{  z_5 g_u + f_s = 0 \} \subset \CP^5_{ k }
\end{equation}
has a singularity of multiplicity three at $p= [0 : \ldots : 0 : 1]$  and is smooth away from $p$.
\end{lemma}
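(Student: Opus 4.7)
The plan is to establish each of the five claims by a genericity argument, exploiting that $s, u, v$ are algebraically independent transcendentals over $k_0$ and that $f, g$ are general. By openness of smoothness, showing smoothness of a scheme over $k$ whose coefficients depend algebraically on these transcendental parameters reduces in each case to exhibiting a single smooth member of an appropriate family defined over $k_0$; specialization at transcendental parameters then picks out the generic member. Items (1)--(3) are handled this way directly: the pencil $\{t f + f_0 = 0\}_{[t:1] \in \CP^1_{k_0}}$ has smooth fibre $\{f = 0\}$ at $[1:0]$ by generality of $f$, giving (1); the analogous argument using the pencil $\{t g + z_2^3 = 0\}$ yields (2); and for (3) one uses the two-parameter family $\{s' f + f_0 = u' g + z_2^3 = 0\}$ over $\A^2_{k_0}$, whose fibre at $(\infty, \infty)$ is the smooth complete intersection $\{f = g = 0\}$ by Bertini for sufficiently general $f, g$.

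For (4), I plan a direct Jacobian analysis. A hypothetical singular point $q = [z_0 : \ldots : z_5]$ of $D_1$ satisfies $4 v z_5^3 + g_u(z) = 0$, $z_5 \partial_i g_u(z) + \partial_i f_s(z) = 0$ for $i = 0, \ldots, 4$, and $v z_5^4 + z_5 g_u(z) + f_s(z) = 0$. The case $z_5 = 0$ immediately forces $(z_0, \ldots, z_4)$ to be a singular point of $\{f_s = 0\}$, contradicting (1). Otherwise, normalizing $z_5 = 1$, one derives $g_u(z) = -4v$, $f_s(z) = 3v$, and $\nabla(g_u + f_s)(z) = 0$; in particular, $z$ lies in the scheme $W \subset \A^5_{K}$ (with $K = \overline{k_0(u, s)}$) cut out by the five critical equations $\nabla(g_u + f_s) = 0$ and the $v$-free relation $3 g_u + 4 f_s = 0$. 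For general $f, g$, a parameter count shows that $W$ is $0$-dimensional: the critical locus of the degree-four polynomial $s' f + f_0 + u' g + z_2^3$ on $\A^5$ is generically finite as $(s', u')$ varies over $\A^2_{k_0}$, so the same holds for the transcendental specialization $(s, u)$. Since $W$ is finite over the algebraically closed field $K$, the value set $\{f_s(z) : z \in W\}$ is a finite subset of $K$; as $v$ is transcendental over $K$, $3v$ cannot lie in this set, so $D_1$ has no singular point.

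For (5), the analogous Jacobian analysis of $\bar{D}_1$ at a hypothetical singular point $q$ yields $g_u(z) = f_s(z) = 0$ together with $\nabla f_s(z) + z_5 \nabla g_u(z) = 0$. If $(z_0, \ldots, z_4) \neq 0$, then by (3) the point $z$ lies on the smooth complete intersection $\{f_s = g_u = 0\}$, whose Jacobian $(\nabla f_s, \nabla g_u)$ has rank $2$; this contradicts the derived proportionality relation (noting that $\nabla f_s$ and $\nabla g_u$ are nonzero by (1) and (2)). Hence the only singular point is $p$. For the multiplicity at $p$, I pass to the affine chart $z_5 = 1$, in which $\bar{D}_1$ is cut out by $g_u + f_s$; the summand $z_2^3$ of $g_u$ ensures $g_u$ vanishes to order exactly $3$ at the origin, while $f_s$, being homogeneous of degree $4$, vanishes to order at least $4$, so the multiplicity of $\bar{D}_1$ at $p$ equals $3$.

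The main obstacle is (4): standard Bertini variants for the pencil $\{t_0 z_5^4 + t_1(z_5 g_u + f_s) = 0\}$ do not directly apply, since both of its endpoints are non-smooth (the former is non-reduced, the latter is singular at $p$ by (5)), so the transcendence of $v$ over $K$ must be combined with a dimension argument on the critical locus of the total family to conclude.
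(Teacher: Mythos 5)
Your proof is correct and, for items (1)--(3), follows the same route as the paper: specialize $s,u\to\infty$ to reduce to the general polynomials $f,g$ and apply Bertini over an infinite field. For items (4) and (5) the paper only states ``the rest follows easily from this'' without elaboration, and your Jacobian analysis is a valid and natural way to supply what is missing. In particular, your treatment of (4) correctly identifies that the content lies in two points: finiteness of the critical locus $W'=\{\nabla(g_u+f_s)=0\}\subset\A^5$ over $K=\overline{k_0(u,s)}$ (which already holds with the five critical equations alone, without the auxiliary relation $3g_u+4f_s=0$; for instance one sees it by degenerating $s\to\infty$, where the critical locus becomes $\{\nabla f=0\}=\{0\}$ by smoothness of $\{f=0\}$), and the transcendence of $v$ over $K$, which rules out the finitely many values $f_s(z)$, $z\in W'(K)$, from equaling $3v$. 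This combination is needed because a straight Bertini argument for the pencil $t z_5^4+(z_5g_u+f_s)$ is delicate in positive characteristic (generic smoothness of $\mathcal D\to\A^1_t$ is not automatic), so your route is safer than what the word ``easily'' might suggest. Your argument for (5) correctly leverages (1) and (3): the proportionality $\nabla f_s=-z_5\nabla g_u$ at a hypothetical singular point with $(z_0,\ldots,z_4)\neq 0$ contradicts rank two of the Jacobian of the smooth complete intersection $\{f_s=g_u=0\}$ (and also absorbs the $z_5=0$ case, since then $\nabla f_s=0$), and reading off the multiplicity $3$ at $p$ from the lowest-order summand $g_u$ in the chart $z_5=1$ is exactly right.
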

\begin{proof}
It suffices to prove the assertion after specializing $s\to \infty$ and $u\to \infty$, i.e.\ we may replace $f_s$ by $f$ and $g_u$ by $g$.
Since $f$ and $g$ are general,  we thus conclude from Bertini's theorem (which works over infinite fields) that 
 $\{f_s=0\}\subset \CP^4_k$ and  $\{g_u=0\}\subset \CP^4_k$ are smooth hypersurfaces that meet in a smooth complete intersection $\{ f_s =  g_u = 0 \} \subset \CP^4_{k}$.
 This proves the first assertion of the lemma; the rest follows easily from this. 
\end{proof}

Let $R:=k[[t]]$ and let $\CP_R(1^7 , 2): = \Proj (R[z_0 ,\ldots , z_6 , w])$ be the weighted projective space, such that the variables $z_i$ have weight one and the variable $w$ has weight two.  
Consider the $R$-scheme
\begin{equation}\label{eq:main-family}
\mathcal X:=\{( \lambda w - z_2^2 )t - z_5 z_6=0 ,\, w^2 - F =0\}\subset \CP_R(1^7 , 2),
\end{equation}
where $F$ is as in (\ref{def:F}).
The $R$-scheme $\mathcal{X}$ is flat over $R$ and the generic fibre $X_K$ is given by the equation
\[
X_K = \{ \lambda^{-2} ( t^{-1}z_5 z_6 + z_2^2 )^2 - v( z_5^4 + z_6^4 ) - ( z_5 + z_6 ) g_u - f_s = 0 \} \subset \CP^6_K,  
\]
which is a smooth quartic fivefold.
(We note that $\mathcal X$ is not regular; in fact, the Weil divisors $Y_1$ and $Y_2$ are not Cartier.)

The special fibre $X_0$ of $\mathcal{X}$ has two irreducible components $X_0=Y_1\cup Y_2$, such that 
\begin{equation}\label{eq:double-quartic-fivefold}
Y_i =\{ w^2 - F =0, z_{7 - i}=0\} = \{ w^2 - v z_{i+4}^4 - z_{i+4} g_u - f_s = 0 \}  \subset \CP_{k} (1^6 , 2)
\end{equation}
for $i=1,2$ is a smooth double quartic fivefold $Y_i\to \CP ^5_{[z_0 : \ldots : z_4 : z_{4+i}]}$. 
Using the symmetry of $F$ in $z_5$ and $z_6$, we get a canonical isomorphism $Y_1\simeq Y_2$.
Moreover, the intersection $Z:=Y_1\cap Y_2 \subset \CP(1^5 ,2)$ is the double quartic fourfold $Z\to \CP^4 $ given by the equation
\begin{equation}\label{eq:smooth-fourfold-double-cover}
Z := \{ w^2 - f_s = 0 \} \subset \CP_k (1^5 ,2 ) . 
\end{equation}
Since $\{f_s=0\}$ is smooth, so is $Z$.

\begin{lemma} \label{lem:mathcalX-alongS}
The singular locus of  $\mathcal{X}$ in \eqref{eq:main-family} is given by
\begin{equation}\label{eq:B_lambda}
S = \{ t = z_5 = z_6 = \lambda w - z_2^2 = w^2 - f_s = 0 \} \subset \mathcal{X} ,
\end{equation}
which is smooth over $k$.
Moreover,  $\mathcal X$ has ordinary quadratic singularities of codimension 3 along $S$.
\end{lemma}
\begin{proof}
As described above, the special fibre $X_0$ is given by the union $Y_1\cup Y_2$ of two smooth projective varieties.
It follows that $\mathcal X$ is regular outside of $Y_1\cap Y_2$ and so the singular locus $S$ is contained in $\{t=z_5=z_6=0\}$.
Considering the Jacobian matrix of $\mathcal{X}$,  the Jacobian criterion then easily shows that $S$ is given as claimed in (\ref{eq:B_lambda}).
Since $\lambda\in k$ is nonzero, $S$ is isomorphic to
$$
\{\lambda^{-2}z_2^4 - f_s = 0 \}\subset \CP^4_k ,
$$
which is smooth over $k$ because $f_s=sf+f_0$ with $f$ general.

Let $p\in S$.
Since $S\subset \{z_5=z_6=0\}$, the $z_5$- and $z_6$-coordinate of $p$ vanish and so at least one of the $z_i$-coordinates with $i=0,\dots ,4$ is nonzero.
Since $\{f_s=0\}\subset \CP^4$ is smooth, we further see that for $i=0,\dots ,4$, at least one partial $\del_{z_i}F=\del_{z_i} f_s$ does not vanish at $p$ (if they vanished all simultaneously, then $f_s$ would vanish at $p$ by the Leibniz identity, contradicting the smoothness of $\{f_s=0\}\subset \CP^4$).
It follows that the tangent space of $\{w^2-F=0\}$ at $p$ intersects the tangent cone of $\{ ( \lambda w - z_2^2 )t - z_5 z_6=0\}$ at $p$ transversely.
The latter is Zariski locally isomorphic to the tangent cone of the ordinary quadratic singularity $\{xt-yz=0\}$, thus proving the claim in the lemma.
\end{proof}

\begin{lemma} \label{lem:main-family-0}
Let $\mathcal{X}$ be the $R= k [[t]]$-scheme as defined in \eqref{eq:main-family}, let $K := k (( t ))$, and let $Y_1$, $Y_2$  be the components of the special fibre $X_0$ of $\mathcal{X}$.  
Then $\mathcal{X}' : = Bl_{Y_2} \mathcal{X}$ is strictly semi-stable with special fibre $Y_1\cup \tilde Y_2$, where $\tilde Y_2=Bl_SY_2$ and $Y_1\cap \tilde Y_2=Bl_SZ=Z$, where $Z= Y_1 \cap Y_2$.
\end{lemma}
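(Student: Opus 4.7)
The plan is to analyze $\mathcal X$ locally near its singular locus $S$, recognize $\mathcal X$ there as an equisingular family of threefold ordinary double points, and then compute the small resolution $\mathcal X'=Bl_{Y_2}\mathcal X$ in two explicit affine charts. Away from $S$, the Jacobian criterion applied to the two defining equations of $\mathcal X$ in $\CP_R(1^7,2)$, combined with the smoothness of the $Y_i$ and of $Z$ from Lemma \ref{lem:barX_1} and \eqref{eq:smooth-fourfold-double-cover}, shows that $\mathcal X$ is already regular and strictly semi-stable with $Y_1,Y_2$ meeting transversally along $Z\setminus S$. On this open part $\pi:\mathcal X'\to\mathcal X$ is an isomorphism and the assertions of the lemma are immediate, so the whole content lies in the analysis near $S$.

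At a very general point of $S$ we have $z_2\neq 0$ and hence $w=z_2^2/\lambda\neq 0$; since $\partial(w^2-F)/\partial w=2w$ is nonzero there, we can solve $w=\sqrt{F}$ in the henselization of the ambient weighted projective bundle. Introducing the regular function $u:=\lambda w-z_2^2$, the only remaining defining equation of $\mathcal X$ becomes $tu=z_5 z_6$, and a Jacobian check shows that $(t,u,z_5,z_6)$ extends to a regular system of parameters on the ambient henselian local ring. Completing by suitable transverse coordinates yields a smooth three-dimensional parameter space $B$ and a local isomorphism
\[
\mathcal X \cong \{tu=z_5 z_6\} \times B
\]
under which $Y_2=\{t=z_5=0\}$, $Y_1=\{t=z_6=0\}$, $Z=\{t=z_5=z_6=0\}$ and $S=\{t=u=z_5=z_6=0\}$. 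In particular $\mathcal X$ has threefold ordinary double points along $S$, and $Y_2$ is the non-Cartier Weil divisor governing the small resolution.

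The blow-up of $\mathcal X$ along the ideal $I_{Y_2}=(t,z_5)$ in this local model is covered by two affine charts. In the chart $s=z_5/t$, the strict transform of $tu-z_5 z_6=0$ reduces to $u=sz_6$, giving smooth coordinates $(t,s,z_6)$; in the chart $r=t/z_5$, it reduces to $z_6=ru$, with smooth coordinates $(z_5,u,r)$. These patches glue via $rs=1$, so $\mathcal X'$ is regular and the exceptional locus over each point of $S$ is a $\CP^1$. In chart $1$ the pullback of $t=0$ is the irreducible smooth divisor $\{t=0\}$, which does not contain the generic point of $Y_1$ (since $z_5\neq 0$ and $t=0$ force $s=\infty$), so this divisor equals $\tilde Y_2$ in the chart. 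In chart $2$ the pullback of $t=0$ is $\{rz_5=0\}$, the transverse union of the smooth divisors $\{r=0\}$ (mapping isomorphically to $Y_1$) and $\{z_5=0\}$ (mapping to $\tilde Y_2$). This gives strict semi-stability and exhibits the special fibre as $Y_1\cup\tilde Y_2$.

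Finally, comparing the two charts of $\tilde Y_2$, namely $(s,z_6)$ with $u=sz_6$ and $(u,r)$ with $z_6=ru$, we recognize the standard blow-up of $Y_2\cong \A^2_{u,z_6}\times B$ along the codimension-two smooth center $\{u=z_6=0\}\times B=S$, so $\tilde Y_2=Bl_S Y_2$. For the intersection, chart $2$ yields $Y_1\cap\tilde Y_2=\{r=z_5=0\}\cong\A^1_u\times B$, and $\pi$ restricts there to an isomorphism onto $Z$. Since $S$ is a codimension-one smooth subvariety of the smooth fourfold $Z$, it is a Cartier divisor on $Z$, and the blow-up map $Bl_S Z\to Z$ is an isomorphism; hence $Y_1\cap\tilde Y_2=Bl_S Z=Z$, as claimed. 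The principal obstacle is the local identification in the second paragraph: one must verify that, after eliminating $w$ via $w^2=F$, the quadruple $(t,u,z_5,z_6)$ really forms part of a regular system of parameters on the ambient smooth scheme, so that the resulting local model is precisely $\{tu=z_5z_6\}\times B$ with a smooth base $B$.
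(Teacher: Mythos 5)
Your local analysis is essentially the right one and fleshes out the "local computation" that the paper's proof invokes but does not write out: you identify the conifold form $tu = z_5z_6$ near $S$, blow up the Weil divisor $Y_2=\{t=z_5=0\}$, and read off both $\tilde Y_2=Bl_SY_2$ and the transverse union in the two charts. That matches the paper's strategy. The identifications $Y_1=\{t=z_6=0\}$, $Y_2=\{t=z_5=0\}$, the two-chart computation, and the final observation that $S\subset Z$ is a Cartier divisor on the smooth $Z$ (so $Bl_SZ=Z$) are all correct.

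There is one genuine gap. Strict semi-stability is a pointwise condition, so the local model $\mathcal X\cong\{tu=z_5z_6\}\times B$ must be exhibited at \emph{every} closed point of $S$, but you only produce it at a "very general" point, using $z_2\neq 0$ to get $w\neq 0$ and then solve $w=\sqrt F$. The locus $S\cap\{z_2=0\}=\{t=z_5=z_6=z_2=w=0,\ f_s=0\}$ is a non-empty codimension-one subvariety of $S$, and there $\partial(w^2-F)/\partial w=2w$ vanishes, so your device of eliminating $w$ breaks down. The fix is short but must be made: at such a point one still has $z_5=z_6=0$, hence $\nabla_z F=(\nabla f_s,\,g_u,\,g_u)$, and since $\{f_s=0\}$ is smooth (Lemma \ref{lem:barX_1}) we have $\nabla f_s\neq 0$; so $\{w^2-F=0\}$ is still regular there, now with $w$ itself surviving as a local coordinate. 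Moreover $du=\lambda\,dw-2z_2\,dz_2=\lambda\,dw\neq 0$ at $z_2=0$, and $dt,du,dz_5,dz_6$ remain linearly independent modulo $d(w^2-F)=-\sum_i(\partial F/\partial z_i)\,dz_i$ (check: matching the $dt$ and $dw$ coefficients forces those to vanish, and then $\nabla f_s\neq 0$ forces the remaining multiplier to vanish). So $(t,u,z_5,z_6)$ is again part of a regular system of parameters, and your chart computation goes through verbatim. With this case added, the proof is complete.
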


\begin{proof}
Note that $Y_1\to \CP^5$ is a double cover branched along $D_1$.
By Lemma \ref{lem:barX_1}, $D_1$ is smooth and so $Y_1$ is smooth.
Since $Y_2\cong Y_1$, the same holds true for $Y_2$.
We have seen above that the singular locus $S$ of $\mathcal X$ is also smooth.
Locally at a point of $S$, $\mathcal X$ has ordinary quadratic singularities of codimension  (see Lemma \ref{lem:mathcalX-alongS}) 
and a local computation shows that the special fibre of $\mathcal X'$ is given by $Y_1\cup \tilde Y_2$, where $\tilde Y_2=Bl_SY_2$.
Since $Y_2$ and $S$ are smooth, so is $\tilde Y_2$.
Moreover, $Y_1\cap \tilde Y_2=Bl_SZ=Z$, where the second equality comes from the fact that $S\subset Z$ is a divisor and $Z$ is smooth.
By construction, $\tilde Y_2$ is a Cartier divisor of $\mathcal X$; since the components of $X'_0$ are reduced and $X_0$ is Cartier, we find that $Y_1$ is Cartier as well.
Since $Y_1$, $\tilde Y_2$ and $Y_1\cap \tilde Y_2=Z$ are smooth and the components of $X'_0$ are Cartier, it follows that $\mathcal X'$ is strictly semi-stable, as we want.
\end{proof}

\begin{lemma}\label{lem:main-family}
In the notation of Lemma \ref{lem:main-family-0}, let $\mathcal{X}'' = \mathcal{X}'\times _{\substack{R\to R\\ t\to t^2}} R$.
Then 
\begin{equation}\label{eq:double-quartic-family}
\tilde{\mathcal{X}} := Bl_{Z}\mathcal{X}'' \to \Spec R
\end{equation}
is a strictly semi-stable $R$-scheme with special fibre
\[
\tilde{X}_0 = Y_1 \cup P_Z \cup \tilde{Y}_2 ,
\]
where $\tilde{Y}_2$ is the blow-up of $Y_2$ along $S$ and $P_Z$ is a $\CP^1_{\kappa}$-bundle over $Z$.  
The intersections $Y_1\cap P_Z$ and $ P_Z\cap \tilde{Y}_2$ are disjoint sections of $P_Z\to Z$.
The generic fibre
\begin{equation}\label{eq:quartic-fivefold}
\tilde X_K = \{  \lambda^{-2} ( t^{-2}z_5 z_6 + z_2^2 )^2 - v (z_5^4 + z_6^4 ) - (z_5 + z_6) g_u - f_s = 0\} \subset \CP^6_K 
\end{equation}
of $\tilde{\mathcal{X}}$ is a smooth quartic fivefold.
\end{lemma}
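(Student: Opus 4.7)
The plan is to perform a standard local analysis: after the $2{:}1$ base change $t\mapsto t^2$ the total space $\mathcal{X}''$ acquires $A_1$-singularities along $Z=Y_1\cap \tilde Y_2$, and blowing up $Z$ is the usual resolution of this family of ordinary double points. The $\CP^1$-bundle $P_Z\to Z$ and the two disjoint sections then emerge directly from the local model of the $A_1$-resolution.

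First I would set up an \'etale-local model of $\mathcal{X}'$ around a point of $Z$. Since $\mathcal{X}'$ is strictly semi-stable with $Y_1$ and $\tilde{Y}_2$ meeting transversally along $Z$ by Lemma \ref{lem:main-family-0}, there are \'etale-local coordinates in which $Y_1=\{x=0\}$, $\tilde{Y}_2=\{y=0\}$, and the uniformizer of $R$ is $xy$ (up to a unit). The base change then produces the local model $\{xy=t^2\}$, whose singular locus is $Z=\{x=y=t=0\}$ and is an $A_1$-family. Blowing up the reduced ideal $(x,y,t)$ and working through the standard affine charts of the blow-up of $\A^3$ at the origin, one verifies directly that the strict transform of $\{xy=t^2\}$ is smooth, that the exceptional locus in this local slice is the smooth conic $\{XY=T^2\}\subset\CP^2_{[X:Y:T]}$ (a $\CP^1$), and that the two components $Y_1=\{x=0\}$ and $\tilde{Y}_2=\{y=0\}$ meet this $\CP^1$ transversally at the two distinct points $[0:1:0]$ and $[1:0:0]$. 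Globalising over $Z$, the exceptional divisor becomes a $\CP^1$-bundle $P_Z\to Z$ and the intersections $Y_1\cap P_Z$, $P_Z\cap\tilde Y_2$ become two disjoint sections of this bundle. Since in addition $Z$ is a smooth Cartier divisor in each of the smooth varieties $Y_1$ and $\tilde{Y}_2$, the strict transforms of $Y_1$ and $\tilde Y_2$ in $\tilde{\mathcal X}$ coincide with $Y_1$ and $\tilde Y_2$ themselves. Combining everything, $\tilde X_0=Y_1\cup P_Z\cup\tilde Y_2$ is a reduced snc divisor and $\tilde{\mathcal X}$ is regular, so $\tilde{\mathcal X}\to\Spec R$ is strictly semi-stable.

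For the generic fibre, the blow-up $\tilde{\mathcal X}\to \mathcal{X}''$ is an isomorphism over $\{t\neq 0\}$, so it suffices to compute the generic fibre of $\mathcal{X}''$. On this locus the first equation of \eqref{eq:main-family} (with $t$ replaced by $t^2$) solves to $w=\lambda^{-1}(z_2^2+t^{-2}z_5z_6)$, and substituting into $w^2=F$ yields \eqref{eq:quartic-fivefold}. Smoothness of this quartic over $K$ can then be verified by a direct Jacobian computation using the algebraic independence of the parameters $\lambda,v,u,s$; alternatively, one may combine the regularity of $\tilde{\mathcal X}$ with the observation that the singular locus $S$ of $\mathcal X$ lies in the special fibre and that the base change and blow-ups are \'etale or isomorphisms over the generic point of $\Spec R$, to conclude that the generic fibre is smooth over $K$.

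The main obstacle will be the explicit local \'etale analysis: in particular, justifying the \'etale model $\{xy=t\}$ for $\mathcal{X}'$ near $Z$ and globalising the two intersection points $[0:1:0]$ and $[1:0:0]$ on the exceptional $\CP^1$ to genuine disjoint sections of $P_Z\to Z$. Once the local picture is pinned down, the remaining assertions of the lemma are essentially bookkeeping.
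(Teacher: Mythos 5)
Your proof takes essentially the same route as the paper: both arguments observe that after the $2{:}1$ base change, $\mathcal{X}''$ acquires ordinary double point singularities along $Z$, and that blowing up $Z$ resolves them, with the exceptional divisor a smooth conic bundle over $Z$ admitting two disjoint sections from $Y_1$ and $\tilde Y_2$ (hence a Zariski-locally trivial $\CP^1$-bundle). The paper simply cites this standard picture in one line; you spell out the \'etale-local model $\{xy=t^2\}$ and the three blow-up charts to verify it, which is a more explicit but equivalent presentation. You also derive the equation \eqref{eq:quartic-fivefold} of the generic fibre by eliminating $w$, which the paper takes as read. One small remark on your alternative smoothness argument: regularity of $\tilde{\mathcal X}$ yields only regularity of $\tilde X_K$, which over the imperfect field $K=k((t))$ does not by itself give smoothness; what saves you is that the degree-$2$ base change $K\to K$, $t\mapsto t^2$, is \'etale in characteristic $\neq 2$ and the blow-ups are isomorphisms over $K$, so $\tilde X_K$ is an \'etale base change of $X_K$, and smoothness reduces to the original generic fibre, which one verifies via the discriminant/parameter-independence argument you sketch. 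Beyond that, your local verification matches the paper's intent in all respects.
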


\begin{proof}
By Lemma \ref{lem:main-family-0}, $\mathcal X'\to \Spec R$ is strictly semi-stable. 
The $2:1$ base change $\mathcal X''$ is thus regular away from the singular locus $Z$ of the central fibre and it has ordinary double point singularities along $Z$ (because \'etale locally at the non-smooth locus, $\mathcal X'$ is given by $t=xy$ and so $\mathcal X''$ is given by $t^2=xy$, cf.\ \cite[Proposition 1.3]{hartl}).
Those singularities are resolved by the blow-up of $Z$ and the corresponding exceptional divisor will be a reduced component of the special fibre,  see e.g. \cite[Proposition 2.2]{hartl}). 
Hence, $\tilde{\mathcal X}$ is regular and the special fibre is given by $ Y_1 \cup P_Z \cup \tilde{Y}_2 $, where $P_Z\to Z$ is a smooth conic bundle.
Moreover, $Y_1\cap P_Z$ (as well as $\tilde Y_2\cap P_Z$) is a section of $P_Z\to Z$ and so $P_Z\to Z$ is a Zariski locally trivial $\CP^1$-bundle, as claimed.
This proves the lemma.
\end{proof}
 
\subsection{Main result} \label{subsec:main-result}  

\begin{theorem} \label{thm:quartic-fivefold-body} 
Let $\overline K$ be the algebraic closure of the fraction field $K$ of $R$.
Then the smooth quartic fivefold
$
\tilde X_{\overline K}\subset \CP^6_{\overline K}
$
given by the base change of \eqref{eq:quartic-fivefold} to $\overline K$ 
does not admit a decomposition of the diagonal.
\end{theorem}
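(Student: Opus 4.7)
The plan is to argue by contradiction and apply the obstruction of Theorem \ref{thm:Phi-geom-gen} to the strictly semi-stable family $\tilde{\mathcal X}\to\Spec R$ constructed in Lemma \ref{lem:main-family}. Its special fibre $\tilde X_0=Y_1\cup P_Z\cup \tilde Y_2$ is a chain of Cartier divisors, with $Y_1\cap P_Z$ and $P_Z\cap \tilde Y_2$ disjoint sections of the $\CP^1$-bundle $P_Z\to Z$. Hence, assuming the geometric generic fibre $\tilde X_{\overline K}$ admits a decomposition of the diagonal, Theorem \ref{thm:Phi-geom-gen} implies that for every unramified extension $A/R$ of discrete valuation rings with induced residue field extension $L/k$, the map
\[
\Phi_{\tilde{\mathcal X}_A}:\CH_1(\tilde X_{0,L})/2\longrightarrow \ker\left(\deg:\bigoplus_i \CH_0((\tilde X_0)_{i,L})/2\to \Z/2\right)
\]
is surjective.

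The crucial choice is to take $A$ to be the local ring of $\tilde{\mathcal X}$ at the generic point of the middle component $P_Z$; this is a discrete valuation ring because $P_Z$ is a Cartier divisor in the regular scheme $\tilde{\mathcal X}$, and its residue field is $L=\kappa(P_Z)$. Fix a zero-cycle $z\in \CH_0(P_Z)$ of degree one. Then the class $(0,\delta_{P_Z}-z_{\kappa(P_Z)},0)$ lies in the kernel of the degree map modulo $2$, and so by the surjectivity above some $\gamma\in \CH_1(\tilde X_{0,\kappa(P_Z)})$ should satisfy $\Phi_{\tilde{\mathcal X}_A}(\gamma)\equiv(0,\delta_{P_Z}-z_{\kappa(P_Z)},0)\bmod 2$. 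I would encapsulate the failure of this as a separate proposition (the Proposition \ref{prop:Obstruction-diagonal} referenced in the overview): no such $\gamma$ exists. This proposition, together with Theorem \ref{thm:Phi-geom-gen}, immediately yields the contradiction.

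To prove this technical proposition, I would write any hypothetical $\gamma$ in the normal form provided by Lemma \ref{lem:CH_1Y}(\ref{item:lem:CH_1Y-2}) as $\gamma=\gamma_{Y_1}+q_{P_Z}^{\ast}\alpha+\gamma_{\tilde Y_2}$ with $\alpha\in \CH_0(Z_{\kappa(P_Z)})$ and $\gamma_{Y_i}\in \CH_1(Y_{i,\kappa(P_Z)})$, and then apply the explicit formulas \eqref{eq:CH_1Y-2} and \eqref{eq:Phi-computation1}. The vanishing modulo $2$ of the $Y_1$- and $\tilde Y_2$-components of $\Phi_{\tilde{\mathcal X}_A}(\gamma)$ forces strong conditions on $\gamma_{Y_1}|_Z$ and $\gamma_{\tilde Y_2}|_Z$, while the $P_Z$-component reduces modulo $2$ to $\gamma_{Y_1}|_Z+\gamma_{\tilde Y_2}|_Z$ (the $-2\alpha$ term dies), viewed as a zero-cycle on $P_Z$ supported on the two sections. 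Pushing the resulting identity forward along $q_{P_Z}:P_Z\to Z$, using the $\CP^1$-bundle structure, extracts an identity on $Z$ asserting that $\delta_Z-z'_{\kappa(Z)}$ is congruent modulo $2$ to the restriction to $\kappa(Z)$ of a zero-cycle on $Z$, for some zero-cycle $z'\in \CH_0(Z)$.

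The final and most delicate step is to specialize this identity from the generic double quartic $Z$ over $k=\overline{k_0(\lambda,u,v,s)}$ to the Hassett--Pirutka--Tschinkel double quartic fourfold $Z_0$ over $k_0$, via the successive specializations $s\to 0$, $u\to 0$, $v\to 0$ made possible by the explicit form \eqref{eq:HPT}--\eqref{def:F}. Applying Fulton's specialization homomorphism on Chow groups (in the form of the planned Lemma \ref{lem:specialization-functoriality}), which preserves the class of the diagonal point, we would conclude that $\delta_{Z_0}\in \im(\CH_0(Z_0)\to \CH_0(Z_{0,\kappa(Z_0)}))$ modulo $2$. By the nontrivial unramified $\Z/2$-cohomology class on $Z_0$ from \cite{HPT}, the torsion order $\Tor(Z_0)$ is even, and so Lemma \ref{lem:torsion} contradicts the above congruence. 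The hardest part of the argument is tracking the rational contributions from $Y_1$ and $\tilde Y_2$ cleanly through the successive specializations; working modulo $2$ throughout is precisely what allows those pieces to be absorbed without damaging the obstruction living on $Z_0$.
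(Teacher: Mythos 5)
Your top-level architecture matches the paper exactly: choose $A=\mathcal O_{\tilde{\mathcal X},P_Z}$ (a dvr with residue field $\kappa(P_Z)$), apply Theorem \ref{thm:Phi-geom-gen} to obtain mod-$2$ surjectivity of $\Phi_{\tilde{\mathcal X}_A}$, and derive a contradiction by showing that $\delta_{P_Z}-z_{\kappa(P_Z)}$ is not in the image (this is precisely Proposition \ref{prop:Obstruction-diagonal}). However, the technical heart of that proposition is where your proposal has genuine gaps. First, after applying Lemma \ref{lem:CH_1Y} and pushing forward along $q_{P_Z}$, what you obtain is $\delta_Z-z'_{\kappa(Z)}\equiv q_\ast(\gamma_{Y_1}|_Z)+q_\ast(\gamma_{\tilde Y_2}|_Z)\bmod 2$, where the right-hand side consists of restrictions to $Z$ of one-cycles on $Y_{1,\kappa(P_Z)}$ and $\tilde Y_{2,\kappa(P_Z)}$. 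These are \emph{not} a priori constant classes, so your claim that the identity "asserts that $\delta_Z-z'_{\kappa(Z)}$ is congruent mod $2$ to the restriction to $\kappa(Z)$ of a zero-cycle on $Z$" is not justified; if it were, the specialization to $Z_0$ would be unnecessary and one would already have a contradiction on $Z$ itself. The entire difficulty is precisely to make those contributions constant after specialization, and your proposal does not indicate how.

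Concretely, you are missing the following ingredients the paper uses. (i) Since $\tilde Y_2=Bl_S Y_2$, the blow-up formula gives $\CH_1(\tilde Y_2)\cong\CH_1(Y_2)\oplus\CH_0(S)$, producing an extra $\CH_0(S)$-contribution (and the $\CH_1(Y_2)$-part is absorbed into the $Y_1$-contribution by the $Y_1\cong Y_2$ symmetry); controlling the $S$-contribution is why the specialization $\lambda\to 0$ (which you omit entirely) is needed, sending $S_{s,\lambda}$ to $S_s$. (ii) The $Y_1$-contribution is controlled by specializing $v\to 0$, which creates a triple point on the branch quartic; blowing it up yields a morphism $\hat Y_1\to\CP^4$ that is a $\CP^1$-bundle over $\CP^4\setminus C_u$, whence $\CH_1(\hat Y_1)$ is universally generated by cycles over $C_u$ and the contribution factors through $T_{s,u}=f^{-1}(C_u)\cap Z_s$. (iii) The specialization $u\to 0$ makes $T_s^{\mathrm{red}}$ coincide with $S_s^{\mathrm{red}}$, collapsing the two remaining contributions into one; and only after $s\to 0$ is $T_0^{\mathrm{red}}$ universally $\CH_0$-trivial, so the whole contribution becomes constant. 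The order $\lambda$, $v$, $u$, $s$ matters and your listed order $s,u,v$ would not work. (iv) Finally, $Z_0$ is \emph{singular}, so your appeal to Lemma \ref{lem:torsion} (which requires smoothness) is not applicable; the paper's Lemma \ref{lem:HPT-class} instead passes to an odd-degree alteration (Gabber) and argues via the Merkurjev pairing with the HPT unramified class, carefully handling zero-cycles supported over the singular locus.
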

 
\begin{proof}
We aim to deduce Theorem \ref{thm:quartic-fivefold-body} from Theorem \ref{thm:Phi-geom-gen}.
To this end,  let $A=\mathcal{O}_{\tilde{\mathcal{X}}, P_Z}$ with residue field $\kappa (P_Z)$.
Then $R\to A$ is an unramified extension of dvr's and so it follows from Lemma \ref{lem:main-family} that $\tilde{ \mathcal X}_A\to \Spec A$ is strictly semi-stable.
By  Definition \ref{def:Phi}, we get a map
$$
\Phi_{\tilde{\mathcal X}_A,P_Z}:\CH_1(\tilde X_0\times \kappa(P_Z))\longrightarrow \CH_0(P_Z\times \kappa(P_Z)) .
$$  
By Theorem \ref{thm:Phi-geom-gen},  Theorem \ref{thm:quartic-fivefold-body} follows 
from Proposition \ref{prop:Obstruction-diagonal} below, which is the main technical result of this section.
\end{proof}

\begin{proposition}\label{prop:Obstruction-diagonal}
Let $\tilde{\mathcal{X}}\to \Spec R$ be as in \eqref{eq:double-quartic-family} and let $A=\mathcal{O}_{\tilde{\mathcal{X}}, P_Z}$ with residue field $\kappa (P_Z)$.  
Then for any zero-cycle $z\in \CH_0(P_Z)$, the element
\begin{align}
\label{eq:class-CH_0-main}
\delta_{P_Z}-z_{\kappa(P_Z)}\in \CH_0(P_Z\times \kappa(P_Z))/2
\end{align}
is not in the image of $\Phi_{\tilde{\mathcal X}_A,P_Z}$ modulo 2, where $\delta_{P_Z}$ denotes the diagonal point of  $P_Z\times \kappa(P_Z)$. 
\end{proposition}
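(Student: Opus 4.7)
The plan is to argue by contradiction: assume that (\ref{eq:class-CH_0-main}) lies in the image of $\Phi_{\tilde{\mathcal{X}}_A, P_Z}$ modulo $2$, and then use an iterated specialization argument to push this hypothesis down to an analogous congruence on the Hassett--Pirutka--Tschinkel fourfold $Z_0$, where it will contradict the anticipated Lemma \ref{lem:HPT-class} that encodes the non-trivial unramified $\Z/2$-cohomology class of \cite{HPT}.

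First, I would exploit the shape of the image. Since the special fibre $\tilde{X}_0 = Y_1 \cup P_Z \cup \tilde{Y}_2$ is a chain and $q_Z: P_Z \to Z$ is a $\CP^1$-bundle whose sections $i_1, i_2$ are given by $Y_1 \cap P_Z$ and $\tilde{Y}_2 \cap P_Z$, Lemma \ref{lem:CH_1Y}(b) lets me write any $\gamma \in \CH_1(\tilde{X}_0 \times \kappa(P_Z))$ as $\gamma_1 + q_Z^\ast\alpha + \gamma_2$ with $\gamma_j \in \CH_1(Y_j \times \kappa(P_Z))$ and $\alpha \in \CH_0(Z \times \kappa(P_Z))$. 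Formula (\ref{eq:Phi-computation1}) then gives
$$\Phi_{\tilde{\mathcal{X}}_A,P_Z}(\gamma) \equiv (i_1)_\ast(\gamma_1|_Z) + (i_2)_\ast(\gamma_2|_Z) \pmod 2,$$
so the hypothesis translates into a congruence $\delta_{P_Z} \equiv z_{\kappa(P_Z)} + (i_1)_\ast\beta_1 + (i_2)_\ast\beta_2 \pmod 2$ in $\CH_0(P_Z \times \kappa(P_Z))$ for suitable $\beta_j \in \CH_0(Z \times \kappa(P_Z))$.

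Next, I would push this equation forward via $q_Z \times \id_{\kappa(P_Z)}: P_Z \times \kappa(P_Z) \to Z \times \kappa(P_Z)$. Because $q_Z \circ i_j = \id_Z$ and the diagonal of $P_Z$ pushes to the diagonal of $Z$ (viewed over the transcendental extension $\kappa(P_Z) = \kappa(Z)(t)$), this yields $(\delta_Z - (z_0)_{\kappa(Z)})_{\kappa(P_Z)} \equiv \beta_1 + \beta_2 \pmod 2$ for some $z_0 \in \CH_0(Z)$. Applying Fulton's specialization map (Lemma \ref{lem:specialization-functoriality}) along $t \to 0$ then descends this equation to a congruence in $\CH_0(Z \times \kappa(Z))/2$ of the form $\delta_Z \equiv (z_0)_{\kappa(Z)} + \spe(\beta_1 + \beta_2) \pmod 2$.

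The remaining, and hardest, step is to eliminate the residual $\spe(\beta_j)$ contributions and descend further from $Z$ to $Z_0$. The parameters $\lambda, u, v, s$ were introduced in Definition \ref{def:fs-gu} precisely to give enough room here: successive specializations (culminating in $s \to 0$, which degenerates $Z$ to $Z_0$) combined with repeated applications of Lemma \ref{lem:specialization-functoriality} should arrange that the specialized $\beta_j$ land in the image of $\CH_0(Z_0) \to \CH_0(Z_0 \times \kappa(Z_0))/2$. The outcome is a congruence
$$\delta_{Z_0} \equiv z'_{\kappa(Z_0)} \pmod 2$$
in $\CH_0(Z_0 \times \kappa(Z_0))$ for some $z' \in \CH_0(Z_0)$, which contradicts Lemma \ref{lem:HPT-class}. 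The main obstacle is precisely the bookkeeping in this last step: one must track how $\beta_1, \beta_2$ depend on the moving parameters and verify that each intermediate specialization is well defined and compatible with the previous equation, so that only a cycle coming from $\CH_0(Z_0)$ survives on the right-hand side.
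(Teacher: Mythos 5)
Your overall skeleton is right: assume the contrary, use the chain structure and Lemma \ref{lem:CH_1Y} to split $\gamma$ into $\gamma_1+q_Z^\ast\alpha+\gamma_2$, note that the $q_Z^\ast\alpha$ contribution dies modulo $2$, and then specialize the remaining data down to the Hassett--Pirutka--Tschinkel fourfold to contradict Lemma \ref{lem:HPT-class}. Pushing forward via $q_Z$ early, rather than at the very end as the paper does, is a legitimate variant.

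However, what you flag as ``the bookkeeping in this last step'' is in fact the mathematical core of the proof, and your proposal supplies none of it. The classes $\beta_j$ are not arbitrary zero-cycles on $Z\times\kappa(P_Z)$: they are restrictions to $Z$ of one-cycles on the end components $Y_1$ and $\tilde Y_2=Bl_S Y_2$, and controlling them requires concrete geometric input on those components, not just repeated applications of Lemma \ref{lem:specialization-functoriality}. Specifically, the paper (i) records that $\CH_1(\tilde Y_2)\cong\CH_1(Y_2)\oplus\CH_0(S)$, so the blow-up centre $S$ contributes a separate and non-negligible summand (your proposal does not distinguish $\tilde Y_2$ from $Y_2$ and ignores $S$ entirely); (ii) degenerates $v\to 0$ so that $Y_1$ acquires a triple point, making the blow-up $\hat Y_1\to\CP^4$ a Zariski-locally trivial $\CP^1$-bundle over the complement of the cubic $C_u=\{g_u=0\}$ (Lemma \ref{lem:hatY1}), which forces the universal surjectivity of $\CH_1(f^{-1}(C_u))\to\CH_1(\hat Y_1)$ (Corollary \ref{cor:CH1(hatY1)}) and so confines the $Y_1$-contribution to cycles supported on the hypersurface $T_{s,u}=f^{-1}(C_u)\cap Z$; (iii) degenerates $u\to 0$ so that $T_{s,u}^{\red}$ agrees with $S_s^{\red}$, unifying the two residual contributions; and (iv) degenerates $s\to 0$ to $Z_0$ and proves that $T_0^{\red}$ has universally trivial Chow group of zero-cycles (Lemma \ref{lem:W_0-rational}), which is exactly what is needed for the residual contribution to land in $\CH_0(Z_0)$ mod $2$. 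The specialization $\lambda\to 0$ (Step 1) is also needed first to put $S$ in a form compatible with (iii). None of these four ingredients appears in your proposal, and the argument cannot close without them: an arbitrary $\beta_j\in\CH_0(Z\times\kappa(P_Z))$ would not specialize into $\CH_0(Z_0)$, and $\CH_1(Y_1)$ for generic $v$ is not directly accessible. So this is a genuine gap rather than a matter of routine verification.
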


\begin{proof}
Recall from Lemma \ref{lem:main-family} that the special fibre of $\tilde {\mathcal X}\to \Spec R$ is given by $Y_1 \cup P_Z \cup \tilde{Y}_2$, where $\tilde Y_2\to Y_2$ is the blow-up along the smooth subvariety $S\subset Z\subset Y_2$.
Since blow-ups commute with extensions of the base field, the blow-up formula for Chow groups yields a canonical isomorphism
$$
 \CH_1(Y_2\times \kappa(P_Z))\oplus \CH_0(S\times \kappa(P_Z)) \cong \CH_1(\tilde Y_2\times \kappa(P_Z)),
$$
Since $P_Z$ is a $\CP^1$-bundle over $Z$, 
$\CH_0(Z\times \kappa(P_Z))\oplus \CH_1(Z\times \kappa(P_Z))\cong \CH_1(P_Z\times \kappa(P_Z)) $.
Since $Y_1\cap P_Z$ is a section of $P_Z\to Z$,  the contribution of $ \CH_1(Z\times \kappa(P_Z))$  to $\CH_1(\tilde{X}_0 \times \kappa (P_Z))$ is absorbed by $\CH_1(Y_1\times \kappa(P_Z)) $ and we get a canonical surjection
$$
\CH_1(Y_1\times \kappa(P_Z))\oplus \CH_0(Z\times \kappa(P_Z))\oplus  \CH_1(Y_2\times \kappa(P_Z))\oplus \CH_0(S\times \kappa(P_Z)) \twoheadrightarrow
\CH_1(\tilde X_0\times \kappa(P_Z) ) .
$$ 
We aim to compute the image of $\Phi_{\tilde{\mathcal X}_A,P_Z}$ modulo $2$.
By Lemma \ref{lem:CH_1Y} \eqref{item:lem:CH_1Y-2}, the contribution of $\CH_0(Z \times \kappa(P_Z))$ via $ \Phi_{\tilde{\mathcal X}_A,P_Z}$ is divisible by $2$ and so we may neglect it in what follows.
 Using the symmetry $Y_1\cong Y_2$ together with the fact that $Y_1$ and $\tilde Y_2$ meet $P_Z$ in (disjoint) sections of the $\CP^1$-bundle $P_Z\to Z$,  we thus conclude that
\begin{align}\label{eq:im(Phi-X)-degeneration}
\im(\Phi_{\tilde{\mathcal X}_A,P_Z})=\im( \CH_1(Y_1\times \kappa(P_Z))\oplus  \CH_0(S \times \kappa(P_Z)) \to  \CH_0(P_Z\times \kappa(P_Z)))  \mod 2.
\end{align}
Here $ \CH_1(Y_1\times \kappa(P_Z))\to  \CH_0(P_Z\times \kappa(P_Z))$ is given by restriction to the Cartier divisor $(Y_1\cap P_Z)\times \kappa(P_Z)$ and pushing forward the resulting zero-cycle to $P_Z\times \kappa(P_Z)$.
Moreover, the map $\CH_0(S\times \kappa(P_Z))\to  \CH_0(P_Z\times \kappa(P_Z))$ is the natural pushforward map, where we use the section of the $\CP^1$-bundle $P_Z\to Z$ given by $P_Z\cap \tilde Y_2$ to identify $S \subset Z$ with a subscheme of $P_Z$.

 The idea is now to perform some specializations to $Y_1$,  $S$, and $P_Z$, to make their Chow groups more accessible, so that we can control the image in (\ref{eq:im(Phi-X)-degeneration}).
 The main technical tool which allows us to perform these specializations is the following result of Fulton.

 \begin{lemma}\label{lem:specialization-functoriality}
Let $B$ be a discrete valuation ring with fraction field $F$ and residue field $L$.
Let $p:\mathcal X\to \Spec B$ and $q:\mathcal Y\to \Spec B$ be a flat proper $B$-schemes with connected fibres.
Denote by $X_\eta,Y_\eta$ and $X_0,Y_0$ the generic and special fibres of $p$, $q$, respectively.
Assume that there is a component $Y_0'\subset Y_0$, such that 
 $A=\mathcal O_{\mathcal Y,Y'_0}$ is a discrete valuation ring (this holds if $Y_0$ is reduced along $Y_0'$) and consider the flat proper $A$-scheme $\mathcal X_A\to \Spec A$, given by base change of $\pi$.
Then Fulton's specialization map induces a specialization map
$$
\spe:\CH_i( X_\eta \times_{F} \overline{F}(Y_\eta) )\longrightarrow \CH_i( X_0 \times_{L} \overline{L}(Y'_0) ),
$$
where $\overline F$ and $\overline L$ denote the algebraic closures of $F$ and $L$, respectively, such that the following holds:
\begin{enumerate}
\item $\spe$  commutes with pushforwards along proper maps and pullbacks along regular embeddings;
\item If $\mathcal X=\mathcal Y$ and $X_0$ is integral, then $\spe(\delta_{X_\eta})=\delta_{X_0}$, where $\delta_{X_\eta}\in \CH_0( X_\eta \times_{F} \overline{F}(X_\eta))$ and $\delta_{X_0}\in \CH_0( X_0 \times_{L} \overline{L}(X_0))$ denote the diagonal points.
\end{enumerate} 
\end{lemma}

\begin{proof}
Let $n$ be the relative dimension of $q:\mathcal Y\to \Spec B$.
Consider the flat proper $B$-scheme $p\times q:\mathcal X\times_B \mathcal Y\to \Spec B$.
By \cite[\S 20.3]{fulton} (see also  \cite[Theorem 3.3 (b)]{fulton2}), there is a specialization map
\begin{align} \label{eq:spe}
\spe:\CH_{i+n}( X_\eta \times_{F} Y_\eta \times_F \overline F  )\longrightarrow \CH_{i+n}( X_0 \times_{L} Y_0 \times_L \overline{L}  ).
\end{align}
This map is compatible with respect to pushforwards along proper maps and pullbacks along regular embeddings, see \cite[Proposition 20.3]{fulton}.
For a given cycle $\gamma$ on $X_\eta \times_{F} Y_\eta \times_F \overline F $, the specialization $\spe(\gamma)$ is constructed by first performing a base change so that $\gamma$ is defined on the generic fibre of $p\times q$.
We may then take the closure $\overline \gamma$ of $\gamma$ in the total space $\mathcal X\times_B\mathcal Y$ and restrict $\overline \gamma$ to the special fibre.
The cycle $\spe(\gamma)$ is then given by the image of $\overline \gamma|_{X_0\times_L Y_0}$ via the natural map
$$
\CH_{i+n}( X_0 \times_{L} Y_0   )\to \CH_{i+n}( X_0 \times_{L} Y_0 \times_L \overline{L}  ).
$$
(Taking the image via the latter map is necessary to make the construction well-defined, because the base change performed above may replace $L$ by a finite extension.)

Pullback of cycles yields a canonical isomorphism
$$
\lim_{\substack{\longrightarrow \\ \emptyset\neq U\subset Y'_0}} \CH_{i+n}( X_0 \times_{L} U \times_L \overline{L}  ) \stackrel{\cong}\longrightarrow \CH_i( X_0 \times_{L} \overline{L}(Y'_0) ),
$$
whose inverse is given by taking closures of cycles.
We may thus consider the induced 
pullback (resp.\ localization) map
\begin{align}\label{eq:localization}
\CH_{i+n}( X_0 \times_{L} Y_0 \times_L \overline{L}  )\longrightarrow  \CH_i( X_0 \times_{L} \overline{L}(Y'_0) ).
\end{align}
By the localization exact sequence \cite[Proposition 1.8]{fulton},  the kernel of the above map is generated by cycles on $ X_0 \times_{L} Y_0 \times_L \overline{L} $ that do not dominate $Y'_0 \times_L \overline{L}$.  
Using this together
with the above description of the specialization map in (\ref{eq:spe}), we find that the composition of (\ref{eq:spe}) with (\ref{eq:localization}) 
factorizes through  
$$
\CH_{i+n}( X_\eta \times_{F} Y_\eta \times_F \overline F  )\longrightarrow \CH_i( X_\eta \times_{F} \overline{F}(Y_\eta) ) ,
$$ 
as the kernel of the latter is again generated by cycles that do not dominate $Y_\eta\times_F \overline F$.
We thus arrive at the specialization map as claimed in the lemma.
The compatibility results with respect to proper pushforwards and pullbacks along regular embeddings follow from the respective properties for (\ref{eq:spe}) and the claim concerning the image of the diagonal point is clear from the construction.
\end{proof}

Since the specialization map in Lemma \ref{lem:specialization-functoriality} commutes with pushforwards along proper maps and pullbacks along regular embeddings,  we may compute the specialization of   (\ref{eq:im(Phi-X)-degeneration})  simply by specializing the involved varieties.
The assumption that (\ref{eq:class-CH_0}) lies in  (\ref{eq:im(Phi-X)-degeneration}) modulo $2$ implies then that the same holds true after specialization and we aim to finally arrive at a contradiction after specializing the parameters $\lambda$, $v$, $u$ and $s$ to zero.

Note that $Z$ depends on $s$, but not on $\lambda$, $v$,  and $u$, and so we write from now on $Z=Z_s$.
Similarly, $S$ depends on $s$ and $\lambda$ but not on $u$ and $v$, and so we write $S=S_{s,\lambda}$.
Note also that $Y_1$ depends on $v$, $u$, and $s$ but not on $\lambda$, but we will not need to indicate this in our notation.
The required degenerations are captured in the following diagram:
\begin{align*}
S_{s,\lambda} \subset Z_s \subset P_{Z_s} {\overset{\lambda \to 0}{\longsquiggly}} &  S_s \subset Z_s  \subset P_{Z_s} \\
 Y_{1 } \xrightarrow{2:1} \CP^5 {\overset{v \to 0}{\longsquiggly}} & \bar{Y}_1 \xrightarrow{2:1} \CP^5 \\
 T_{s,u} \subset Z_s {\overset{u \to 0}{\longsquiggly}} & T_s \subset Z_s \\
S_s\subset Z_s,\ \ \ T_s \subset Z_s  {\overset{s \to 0}{\longsquiggly}} & S_0^{\red}=T_0^{\red} \subset Z_0  .
\end{align*}
Here $T_{s,u}\subset Z_s\subset P_{Z_s}$ is a hypersurface of $Z_s$ that depends on $s$ and $u$ and such that 
$$
\im(\CH_1(\bar Y_1 \times \kappa(P_{Z_s}) )\to \CH_0(P_{Z_s}  \times \kappa(P_{Z_s}) ) )\subset \im(\CH_0(T_{u,s} \times \kappa(P_{Z_s}))\to \CH_0(P_{Z_s} \times \kappa(P_{Z_s})  )  ).
$$

We explain our notation and construction in detail in the following four steps.
In each step Lemma \ref{lem:specialization-functoriality} will be applied to a situation where the special fibre $Y_0$ of $\mathcal Y$ is integral, so that $Y_0'=Y_0$.
To simplify notation, we will not write down the total spaces of our degenerations explicitly, but only indicate which parameter (i.e.\ $\lambda$, $v$, $u$, or $s$) is sent to zero.
For the same reason, we will use the following convention: if the defining equation of $X$ does not depend on a parameter $\mu$, then we denote  the specialization of $X$ via $\mu\to 0$ with the same letter.
When applying the above specializations, the ground field $k=\overline{k_0(\lambda,u,v,s)}$ will change in each step in the sense that one has to delete one transcendental parameter each time (note however that the resulting field remains algebraically closed by construction in Lemma \ref{lem:specialization-functoriality}). 
To simplify notation, we will  not make this change of the ground field explicit in our notation.

\subsubsection{Step 1}
In the first step, we aim to simplify the contribution from $S=S_{s,\lambda}$ by specializing $\lambda \to 0$.
The union $Y_1\cup P_{Z_s}$ does not depend on $\lambda$.
On the other hand,  $S_{s,\lambda}$ specializes by (\ref{eq:B_lambda}) to the hypersurface
\begin{align} \label{eq:S_s}
S_s= \{ z_5 = z_6 = z_2^2 = w^2 - f_s = 0 \}
\end{align}
given as the pullback of the non-reduced plane $z_2^2=0$ via the double covering $Z_s\to \CP^4$.
We thus find by Lemma \ref{lem:specialization-functoriality} that the image of (\ref{eq:im(Phi-X)-degeneration}) via the specialization $\lambda\to 0$ is given by
\begin{align} \label{eq:im(S_s)}
 \im( \CH_1(Y_1\times \kappa(P_{Z_s}))\oplus  \CH_0(S_s\times \kappa(P_{Z_s})) \to  \CH_0(P_{Z_s}\times \kappa(P_{Z_s})))  \mod 2 .
\end{align}

\subsubsection{Step 2}
In the second step, we aim to get a hand on $\CH_1(Y_1)$.
For this, we degenerate $Y_1$ via $v\to 0$, so that the double cover $Y_1\to \CP^5$ from (\ref{eq:double-quartic-fivefold}) specializes to a singular double cover $\bar{Y}_1\to \CP^5$, branched along the quartic 
$ 
\bar D_1\subset \CP^5
$ from (\ref{eq:equation-of-singular-X_1}).
By Lemma \ref{lem:barX_1},  $\bar D_1$ has a triple point at $p=[0:\dots :0:1]$ as its unique singularity. 
The triple point makes $\bar{Y}_1$ rational and we will use this to control its first Chow group.
Since $Z_s$, as well as $S_s$, do not depend on the parameter $v$,  they specialize smoothly in this step.
Hence the image of (\ref{eq:im(Phi)-mod2}) via the specialization $\lambda\to 0$ followed by $v\to 0$ is given by
\begin{align} \label{eq:im(overlineY)}
 \im( \CH_1(\overline Y_1\times \kappa(P_{Z_s}))\oplus  \CH_0(S_s\times \kappa(P_{Z_s})) \to  \CH_0(P_{Z_s}\times \kappa(P_{Z_s})))  \mod 2 .
\end{align}

Let $\hat Y_1$ be the blow-up of  $\overline Y_1$ along $p$.
Projection from $p$ given by $[z_0:\dots:z_5]\mapsto [z_0:\dots:z_4]$ induces a morphism
 $$
 f:\hat Y_1\longrightarrow \CP^4.
 $$
Let 
$$ 
C_{u}:=\{g_u=0\}\subset \CP^4.
$$

\begin{lemma}\label{lem:hatY1} 
The base change of $f:\hat Y_1\to \CP^4$  to the open subset $U=\CP^4\setminus C_u$ is a Zariski-locally trivial $\CP^1$-bundle.
\end{lemma}
\begin{proof}
We identify $\CP^4$ with the hyperplane $H=\{z_5=0\}\subset \CP^5$.
For a point $y\in H$ that is not contained in $C_u$, the line through $p$ and $y$ in $\CP^5$ meets $\bar D_1$ in exactly two points: in $p$ with multiplicity three and in a another point $q(y)$ with multiplicity one.
The fibre $f^{-1}(y)$ then identifies to the double cover of $\CP^1$ branched at $p$ and $q(y)$.
Hence, $f^{-1}(y)$ is a smooth conic and the branch point $p$, which does not depend on $y$ yields a section of the base change of $f$ to $U$.
Hence, $f^{-1}(U)\to U$ is a smooth conic bundle with a section,  and so it identifies to a Zariski locally trivial $\CP^1$-bundle.
This proves the lemma.
\end{proof}

\begin{remark}
In the above notation, one can check that for $y\in C_u$, the point $q(y)$ collides with $p$ and the fibre $f^{-1}(y)$ is given by two lines that meet in one point, corresponding to the point $p$.
We did not include this description as it will be irrelevant for our argument.
\end{remark}

\begin{corollary}\label{cor:CH1(hatY1)}
The  canonical map $\CH_1(f^{-1}(C_u))\to \CH_1(\hat Y_1)$ is universally surjective, i.e.\ it is surjective after any extension of the base field.
\end{corollary}
\begin{proof}
Since $f^{-1}(U)\to U$ is a $\CP^1$-bundle, by Lemma \ref{lem:hatY1},  $\CH_1(f^{-1}(U))\cong \CH_0(U) \oplus \CH_1 (U)$ holds universally, i.e.\ after any extension of the base field.
By the localization exact sequence, we get an exact sequence $\CH_0(C_u)\to \CH_0(\CP^4)\to \CH_0(U)\to 0$, which holds again after any extension of the base field.
Since $C_u$ contains a rational point (as it is defined over an algebraically closed field), the first arrow in the localization exact sequence is surjective and so $\CH_0(U)=0$ holds after any extension of the base field.
Similarly, $\CH_1(C_u)\to \CH_1(\CP^4)\to \CH_1(U)\to 0$ is exact after any extension of the base field.
Since $C_u$ is a cubic threefold over an algebraically closed field, it contains a line and so we conclude that $\CH_1(U)=0$ holds after any extension of the base field.
Altogether we find that $\CH_1(f^{-1}(U))=0$ after any extension of the base field.
The result follows thus from the localization exact sequence $\CH_1(f^{-1}(C_u))\to \CH_1(\hat Y_1)\to \CH_1(f^{-1}(U))\to 0$.
This proves the corollary. 
\end{proof}

Consider \eqref{eq:im(overlineY)} and recall that the map
\begin{align}\label{eq:im(barY-1-2)}
 \CH_1(\overline Y_1\times \kappa(P_{Z_s}))\longrightarrow \CH_0(P_{Z_s}\times \kappa(P_{Z_s}))
\end{align}
is given by intersecting a one-cycle on $\overline Y_1\times \kappa(P_{Z_s})$ with 
$$
(\overline Y_1\cap P_{Z_s})\times  \kappa(P_{Z_s})\cong Z_s\times  \kappa(P_{Z_s}) .
$$
The singular point of $\overline Y_1$ does not meet the above intersection.
To compute the image of \eqref{eq:im(barY-1-2)}, we may thus replace $\overline Y_1$ by the blow-up $\hat Y_1$.
Corollary \ref{cor:CH1(hatY1)} then shows that the image of \eqref{eq:im(barY-1-2)}  is contained in the image of
 \begin{align*} 
 \CH_0(T_{s,u}\times \kappa(P_{Z_s}))\longrightarrow \CH_0(P_{Z_s}\times \kappa(P_{Z_s})),
\end{align*}
where
\begin{align} \label{eq:T_su}
T_{s,u}:=f^{-1}(C_u)\cap Z_s=\{w^2-f_s=g_u=0\}\subset \CP(1^5,2),
\end{align}
cf.\ \eqref{eq:smooth-fourfold-double-cover}.
We thus conclude that  \eqref{eq:im(overlineY)} is contained in
\begin{align} \label{eq:im(T_u)}
 \im( \CH_0(T_{s,u} \times \kappa(P_{Z_s}))\oplus  \CH_0(S_s\times \kappa(P_{Z_s})) \to  \CH_0(P_{Z_s}\times \kappa(P_{Z_s})))  \mod 2 .
\end{align}

\subsubsection{Step 3}
In this step we specialize $u\to 0$.
By construction of $g_u$ in \eqref{def:gu}, this specializes $g_u$ to $g_0=z_2^3$ and hence $T_{s,u}$ specializes to
$$
T_s:=\{w^2-f_s=z_2^3=0\}\subset \CP(1^5,2).
$$ 
Applying the specialization map from Lemma \ref{lem:specialization-functoriality} that corresponds to $u\to 0$ to (\ref{eq:im(T_u)}), we then get
\begin{align*} 
 \im( \CH_0(T_{s } \times \kappa(P_{Z_s}))\oplus \CH_0(S_s\times \kappa(P_{Z_s})) \to  \CH_0(P_{Z_s}\times \kappa(P_{Z_s})))  \mod 2 .
\end{align*}
Comparing the above description of $T_s$ with that of $S_s$ in \eqref{eq:S_s}, we find that $T_s^{\red}=S_s^{\red}$.
Since Chow groups depend only on the underlying reduced schemes, the above image simplifies further to 
 \begin{align} \label{eq:im(T_s)-2}
 \im(  \CH_0(T^{\red}_s\times \kappa(P_{Z_s}) ) \to  \CH_0(P_{Z_s}\times \kappa(P_{Z_s})))  \mod 2 .
\end{align}

\subsubsection{Step 4}
In this final step, we specialize $s\to 0$.
This way $Z_s$ specializes to the double cover $Z_0\to \CP^4$ branched along the quartic $f_0$ of Hassett--Pirutka--Tschinkel from \eqref{eq:HPT}.
Moreover, $T^{\red}_s$ specializes to
$$
T^{\red}_0=\{w^2-f_0=z_2=0\}\subset \CP(1^5,2).
$$
Using the explicit description of $f_0$ in (\ref{eq:HPT}), we find
$$
T^{\red}_0=\{w^2-z_0z_1z_3^2=z_2=0\}\subset \CP(1^5,2).
$$

\begin{lemma}\label{lem:W_0-rational}
$T_0^{\red}$
has universally trivial Chow group of zero-cycles.
\end{lemma}

\begin{proof}
We write for simplicity $T:=T_0^{\red}$.
The non-normal locus of $T $ is given by the plane $\{w=z_3=0\}$.
The normalization $\tilde{T}\to T$ of $T$ is locally given by the equation
\[
  v^2 - z_0 z_1 =0 ,
\]
where we substituted $v=w/ z_3$. 
That is, $\tilde T$ is an integral quadric given by the equation
\[
\tilde T=\{ z_4^2 - z_0 z_1 = 0\} \subset \CP^4_{[z_0 : \ldots : z_4]}.
\]
Note that $\tilde T$ is a cone over a smooth conic with a rational point, hence a cone over $\CP^1 $ and so it has universally trivial Chow group of zero-cycles.
Since the non-normal locus of $T$ is a plane, which of course has universally trivial Chow group of zero cycles as well, we conclude that $T$ has universally trivial Chow group of zero-cycles, as we want.  
\end{proof}

By Lemma \ref{lem:W_0-rational}, the specialization of (\ref{eq:im(T_s)-2}) under $s\to 0$ is contained in 
\begin{align} \label{eq:im(T_0)}
 \im( \CH_0(P_{Z_0})  \to  \CH_0(P_{Z_0}\times \kappa(P_{Z_0})))  \mod 2 .
\end{align}
Our initial assumption that (\ref{eq:class-CH_0-main}) is contained in (\ref{eq:im(Phi-X)-degeneration}) then implies  that
$ 
\delta_{P_{Z_0}} 
$ 
is contained in (\ref{eq:im(T_0)}).
Since $P_{Z_0}$ is a $\CP^1$-bundle over $Z_0$,  the pushforward of cycles yields a universal isomorphism $\CH_0(P_{Z_0})\cong \CH_0( Z_0 )$.
Moreover,  $\CH_0( Z_0 \times \kappa( Z_0))\cong \CH_0( Z_0 \times \kappa(P_{Z_0}))$, because Chow groups do not change under purely transcendental field extensions.
Since $\delta_{P_{Z_0}} $ maps to $ \delta_{Z_0 }$ via the composition
$$
 \CH_0(P_{Z_0}\times \kappa(P_{Z_0}))\stackrel{\cong}\longrightarrow  \CH_0( Z_0 \times \kappa(P_{Z_0})) \stackrel{\cong}\longrightarrow  \CH_0( Z_0 \times \kappa( Z_0))
$$ 
we thus conclude 
  that  
$$
 \delta_{Z_0 }\in  \im( \CH_0( Z_0 )  \to  \CH_0( Z_0 \times \kappa(Z_0)))  \mod 2.
$$
The proof of Proposition \ref{prop:Obstruction-diagonal} is then completed by Lemma \ref{lem:HPT-class} below, where we note that $Z_0$ is defined over $k_0$ and so the function field  $\kappa(Z_0)$ of $Z_0$ may also be written as $k_0(Z_0)$.  
\end{proof}

\begin{lemma} 
\label{lem:HPT-class}
Let $
Z_0 = \{ w^2 - f_0 = 0 \} \subset \CP_{k_0} (1^5 , 2 ) $
where $f_0$ is as in (\ref{eq:HPT}). 
Then the class $\delta_{Z_0}\in\CH_0 (Z_{0 , k_0 (Z_0)})$ is non-zero in the quotient
\begin{align} \label{eq:delta-HPT-in-CH}
0\neq \delta_{Z_0 }\in \frac{\CH_0( Z_{0 , k_0 (Z_0)})/2}{\CH_0(Z_{0 })/2} .
\end{align}
\end{lemma}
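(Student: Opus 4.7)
The plan is to deduce this from the nontrivial unramified cohomology class constructed by Hassett--Pirutka--Tschinkel, via the standard obstruction of such classes to decompositions of the diagonal. First, I would invoke \cite{HPT2} to produce a smooth projective $k_0$-variety $\tilde Z_0$ birational to $Z_0$ together with a proper birational morphism $\mu:\tilde Z_0\to Z_0$ that is universally $\CH_0$-trivial, meaning $\mu_\ast:\CH_0(\tilde Z_{0,L})\to \CH_0(Z_{0,L})$ is an isomorphism for every extension $L/k_0$; this relies on the explicit analysis of the singularities of the specific quartic $f_0$. The same reference produces a nonzero class
$$
\alpha\in H^2_{\mathrm{nr}}(k_0(\tilde Z_0)/k_0,\mathbb Z/2)=H^2_{\mathrm{nr}}(k_0(Z_0)/k_0,\mathbb Z/2).
$$

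Second, I would apply the standard action of $\alpha$ on zero-cycles of the generic fibre, in the form used by Voisin \cite{voisin} and Colliot-Th\'el\`ene--Pirutka \cite{CTP}. Evaluating $\alpha$ at the diagonal point gives back $\alpha$ itself (which is nonzero), while evaluating at any base-changed cycle $z_{k_0(\tilde Z_0)}$ with $z\in\CH_0(\tilde Z_0)$ gives zero: indeed, since $k_0$ is algebraically closed, every closed point of $\tilde Z_0$ has residue field $k_0$, and $H^2(k_0,\mathbb Z/2)=0$. Because $2\alpha=0$, this shows
$$
0\neq \delta_{\tilde Z_0}\in \frac{\CH_0(\tilde Z_{0,k_0(\tilde Z_0)})/2}{\CH_0(\tilde Z_0)/2}.
$$

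Finally, the universal $\CH_0$-triviality of $\mu$, base-changed to $k_0(Z_0)=k_0(\tilde Z_0)$, provides compatible isomorphisms $\CH_0(\tilde Z_0)/2\cong \CH_0(Z_0)/2$ and $\CH_0(\tilde Z_{0,k_0(\tilde Z_0)})/2\cong \CH_0(Z_{0,k_0(Z_0)})/2$ respecting diagonal points, transferring the nonvanishing to the statement as required. The main obstacle is the existence of both $\alpha$ and the universally $\CH_0$-trivial resolution $\mu$ for the very specific quartic $f_0$; these are the essential technical inputs from \cite{HPT2} and are not something one would expect to reprove here. Given those inputs, the remainder is a formal application of the unramified cohomology obstruction.
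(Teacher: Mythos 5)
Your proposed argument is the standard unramified-cohomology obstruction: produce a nonzero $\alpha\in H^2_{\mathrm{nr}}(k_0(Z_0)/k_0,\Z/2)$, evaluate against the diagonal of a \emph{smooth} model via Merkurjev's pairing, and transfer the conclusion to $Z_0$ along a universally $\CH_0$-trivial resolution $\mu:\tilde Z_0\to Z_0$. This is indeed what one does over $\C$, and it is essentially how \cite{HPT2} is set up. However, there is a genuine gap: the lemma is stated over an arbitrary algebraically closed field $k_0$ of characteristic $\neq 2$, and the whole point of the paper is to handle positive characteristic (over $\C$ the result already follows from \cite{NO}). In positive characteristic, the existence of a resolution of singularities of $Z_0$, let alone a universally $\CH_0$-trivial one, is not available off the shelf; \cite{HPT2} is written over $\C$ and relies on characteristic-zero resolution. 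So your Step 1 — "invoke \cite{HPT2} to produce a smooth projective $\tilde Z_0$ with a proper birational, universally $\CH_0$-trivial $\mu$" — is precisely the step that fails (or at least requires a nontrivial new verification) when $\mathrm{char}(k_0)>0$.

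The paper avoids this by replacing the resolution with an \emph{alteration}: after blowing up the line $\ell$ to get $Z_0'=\mathrm{Bl}_\ell Z_0\to\CP^2$ (which carries the HPT class $\pi^\ast\alpha$, and this existence does transfer to $\mathrm{char}\neq 2$ by \cite[Example 4.2]{Sch-JAMS}), it applies Gabber's theorem to obtain a smooth alteration $\tau:Z_0''\to Z_0$ of \emph{odd} degree. Since one is working modulo $2$, odd-degree alterations are as good as birational resolutions for the Merkurjev pairing with $\Z/2$-coefficients, but now a correction term $z_2$ supported over $Z_0^{\sing}$ appears and must be shown to pair to zero with $\tau^\ast\pi^\ast\alpha$. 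This requires the additional ingredients the paper deploys: Schreieder's vanishing result \cite[Theorem 9.2]{Sch-JAMS} for points not dominating $\CP^2$, and the explicit observation that $\pi^\ast\alpha$ restricts to zero on the generic fibre of the exceptional divisor $E\to\CP^2$ (because that fibre is the conic corresponding to the symbol $\alpha$). None of this is needed if you have a universally $\CH_0$-trivial resolution, but that input is exactly what is missing in positive characteristic; you would need to either reprove the HPT2 resolution analysis in $\mathrm{char}\,p$, or, as the paper does, work with alterations and the finer cancellation argument.
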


\begin{proof}
As noted in   \cite{HPT2},  $Z_0$ is birational to the $(2,2)$ hypersurface in $\CP^2 \times \CP^3$ as described in \cite{HPT}.
The lemma follows thus from \cite[Proposition 11]{HPT} and \cite[Theorem 9.2]{Sch-JAMS} by similar arguments as in \cite[Proposition 3.1 and 7.1]{Sch-JAMS}.
We give some details for convenience of the reader.

Recall that the double cover $Z_0\to \CP^4$ is given by the equation
$$
w^2=z_0 z_1 z_3^2 + z_0 z_2 z_4^2 + z_1 z_2 (z_0^2 + z_1^2 + z_2^2 - 2( z_0 z_1 + z_0 z_2 + z_1 z_2)) .
$$
The branch locus in $\CP^4$ has multiplicity two along the line $\ell:=\{z_0=z_1=z_2=0 \}$.
Let $Z_0'=Bl_{\ell} Z_0$ and let $\pi: Z_0'\to \CP^2$ denote the morphism induced by projection from $\ell$.
The generic fibre of $\pi$ is the quadric surface from \cite{HPT} and so  $\alpha=(z_1/z_0,z_2/z_0)\in H^2(k_0(\CP^2),\Z/2)$ satisfies
$$
0\neq \pi^\ast \alpha\in H^2_{nr}(k_0(Z_0')/k_0,\Z/2) ,
$$
see \cite[Proposition 11]{HPT} (\textit{loc. cit.} is written over the field of complex numbers, but the same arguments work over any algebraically closed field of characteristic different from two, cf.\ \cite[Example 4.2]{Sch-JAMS}).
The exceptional divisor $E$ of $Bl_{\ell}Z_0\to Z_0$ is given by the equation
$$
w^2=z_0 z_1 z_3^2 + z_0 z_2 z_4^2 .
$$
This is a conic bundle over $\CP^2$ whose generic fibre is the conic that corresponds to the symbol $\alpha$ and so $\pi^\ast \alpha$ vanishes when restricted to the generic point of $E$.
If $e\in E$ is a regular point, then the pullback map $H^2(\Spec \mathcal O_{E,e},\Z/2)\to H^2(k(E),\Z/2)$ is injective (see e.g.\ \cite[Theorem 3.6(a)]{Sch-survey}) and so the restriction of $\pi^\ast \alpha$
to any regular point of $E$ vanishes.
In particular, $\pi^\ast \alpha$ vanishes at any point of the generic fibre of $E\to \CP^2$.
Let now $\tau':Z_0''\to Z_0'$ be an alteration whose degree is odd, which exists by Gabber's theorem,  see \cite{IT}.
Since $k_0$ is perfect, $Z_0''$ is smooth.

Consider the composition $\tau:Z_0''\to Z_0$ of $\tau'$ with the blow-down map $Z_0'\to Z_0$.
Clearly,  $\tau$ is an alteration of odd degree.
The base change of $\tau$ to $k_0(Z_0)$ is an alteartion of odd degree of $Z_{0,k_0(Z_0)}$ that we denote with the same symbol.
For a contradiction, we assume that there is a class $z_1\in \CH_0(Z_{0,k_0(Z_0)})$ and a class $z'\in \CH_0(Z_0)$ such that
$$
\delta_{Z_0}=2z_1+z'_{k_0(Z_0)}\in \CH_0( Z_{0 , k_0 (Z_0)}).
$$
We restrict this class to the smooth locus of $Z_{0,k_0(Z_0)}$ and pull that back via $f$.
The localization exact sequence \cite[Proposition 1.8]{fulton} then shows that
$$
\delta_{\tau}=z_2+2z'_1+z''_{k_0(Z_0)}\in \CH_0( Z''_{0 , k_0 (Z_0)}),
$$
where $\delta_{\tau}$ is the zero cycle on $Z''_{0 , k_0 (Z_0)}$ induced by the graph of $\tau$, $z_2$ is supported on $\tau^{-1}(Z^{\sing}_{0,k_0(Z_0)})$ and $z'_1\in \CH_0(Z''_{0,k_0(Z_0)})$  and $z''\in \CH_0(Z''_0)$ are some classes.

We aim to compute the Merkurjev pairing (see e.g.\ \cite[\S 5]{Sch-survey}) of the above zero-cycle with the unramified cohomology class $\tau^\ast \alpha\in H^2_{nr}(k_0(Z_0'')/k_0,\Z/2)$.
Here we find
$$
\langle \delta_{\tau}, \tau^\ast \pi^\ast \alpha \rangle=\deg(\tau)\cdot  \pi^\ast \alpha \in H^2(k_0(Z_0),\Z/2).
$$
This class is nonzero, because $\deg(\tau)$ is odd and $\pi^\ast \alpha$ is nonzero.

On the other hand, 
$$
\langle z_2+2z'_1+z''_{k_0(Z_0)},  \tau^\ast \pi^\ast \alpha  \rangle=\langle z_2 +z''_{k_0(Z_0)},  \tau^\ast \pi^\ast \alpha  \rangle=\langle z_2  ,  \tau^\ast \pi^\ast \alpha  \rangle,
$$
where we used that $ \langle z''_{k_0(Z_0)},  \tau^\ast \pi^\ast \alpha \rangle=0$ as $ \tau^\ast \pi^\ast \alpha$ restricts to zero on any closed point of $Z_0$,  because $k_0$ is algebraically closed. 
We claim 
$$
\langle z_2  ,  \tau^\ast \pi^\ast \alpha  \rangle=0
$$
and it suffices to show our claim in the case where $z_2$ is a single point.
If $z_2$ does not map to the generic point of $\CP^2$ via the composition $f:=\pi\circ \tau:Z_0''\to \CP^2$, then the claim follows from \cite[Theorem 9.2]{Sch-JAMS}.
Otherwise, since $z_2$ is supported on $\tau^{-1}(Z^{\sing}_{0,k_0(Z_0)})$ and the generic fibre of  $\pi:Z_0'\to \CP^2$ is smooth, $\tau(z_2)$ is a point on the generic fibre of $E\to \CP^2$.
We have shown above that $\alpha$ vanishes when restricted to any point on the generic fibre of $E\to \CP^2$ and this implies that
$$
\langle z_2  ,  \tau^\ast \pi^\ast \alpha  \rangle=0 ,
$$
as we want.
Altogether we have thus shown that 
$$
0\neq \deg(\tau)\cdot  \pi^\ast \alpha=\langle \delta_{\tau}, \tau^\ast \pi^\ast \alpha \rangle=\langle z_2+2z'_1+z''_{k_0(Z_0)},  \tau^\ast \pi^\ast \alpha  \rangle=0 \in H^2(k_0(Z_0),\Z/2).
$$
This is a contradiction, which concludes the proof of the lemma.
\end{proof}

\section*{Acknowledgements}   
Thanks to John  Ottem for conversations and comments.
We are also grateful to a referee for comments that helped to improve the exposition.
 	This project has received funding from the European Research Council (ERC) under the European Union's Horizon 2020 research and innovation programme under grant agreement No 948066.


\begin{thebibliography}{BCTSS85} 


\bibitem[AM72]{artin-mumford}
M.\ Artin and D.\ Mumford, {\em Some elementary examples of unirational varieties which are not rational}, Proc.\ London Math.\ Soc.\ (3) \textbf{25} (1972), 75--95.


\bibitem[BCTSS85]{BCTSS}
A.\ Beauville, J.-T.\ Colliot-Th\'el\`ene, J.-J.\ Sansuc and P.\ Swinnerton-Dyer, {\em Vari\'et\'es Stablement Rationnelles Non Rationnelles},
Ann.\ Math.\ \textbf{121} (1985),  283--318.


\bibitem[BBG19a]{BBGa}
C.\ B\"ohning, H.-C.\ von Bothmer and M.\ van Garrel, {\em Prelog Chow groups of self-products of degenerations of cubic threefolds}, European Journal of Mathematics \textbf{8.1} (2022), 260--290.

\bibitem[BBG19b]{BBGb}
C.\ B\"ohning, H.-C.\ von Bothmer and M.\ van Garrel, {\em Prelog Chow rings and degenerations}, Rendiconti del Circolo Matematico di Palermo, Series 2 (2022), 1--34.

\bibitem[Bou06]{bourbaki}
N.\  Bourbaki,  {\em  Alg\`ebre commutative,  Chapitres 8 et 9,} Springer (2006).

\bibitem[CG72]{CG}
C.H.\ Clemens and P.A.\ Griffiths, {\em The intermediate Jacobian of the cubic threefold}, Ann.\ Math.\ \textbf{95} (1972), 281--356.


\bibitem[CL17]{CL}
A.\ Chatzistamatiou and M.\ Levine, {\em Torsion orders of complete intersections},  Algebra \& Number Theory \textbf{11} (2017), 1779--1835. 


\bibitem[CTP16]{CTP}
J.-L.\ Colliot-Th\'el\`ene and A.\ Pirutka, {\em Hypersurfaces quartiques de dimension 3 \ : non rationalit\'e stable},  Annales Sc.\ \'Ec.\ Norm.\ Sup.\ \textbf{49} (2016), 371--397.


\bibitem[CM98]{conte-murre}
A.\ Conte and J.\ Murre, {\em On a theorem of Morin on unirationality of the quartic fivefold}, Atti Accad.\ Sci.\ Torino Cl.\ Sci.\ Fis.\ Mat.\ Natur. \textbf{132} (1998), 49--59.



\bibitem[EGAIV.4]{EGAIV.4}
A.\ Grothendieck, {\em  \'El\'ements de g\'eom\'etrie alg\'ebrique IV. \'Etude locale des sch\'emas et des morphismes de sch\'emas, Quatri\`eme partie},   Publications Math\'ematiques de l'IH\'ES \textbf{32} (1967),   5--361.
 

\bibitem[Ful75]{fulton2}
W.\ Fulton, {\em Rational equivalence on singular varieties}, Publ.\ Math.\ IHES \textbf{45} (1975), 147--167.

\bibitem[Ful98]{fulton}
W.\ Fulton, {\em Intersection theory}, Springer--Verlag, 1998.

\bibitem[Har01]{hartl}
U.\ T.\ Hartl, {\em Semi-stability and base change}, Arch. Math. \textbf{77} (2001), 215-–221.


\bibitem[HPT18]{HPT}
B.\ Hassett, A.\ Pirutka and Yu.\ Tschinkel, {\em Stable rationality of quadric surface bundles over surfaces}, Acta Math.\ \textbf{220} (2018),  341--365.
 
\bibitem[HPT19]{HPT2}
B.\ Hassett, A.\ Pirutka and Yu.\ Tschinkel, {\em A very general quartic double fourfold is not stably rational}, Algebraic Geometry \textbf{6} (2019), 64--75.


\bibitem[IT14]{IT}
L.\ Illusie and M.\ Temkin,
{\em Expos\'e X. Gabber's modification theorem (log smooth case)}, Ast\'erisque (2014),
no.\ 363--364, 167--212, Travaux de Gabber sur l'uniformisation locale et la cohomologie
\'etale des sch\'emas quasi-excellents.


\bibitem[IM72]{IM}
V.A.\ Iskovskikh and Y.I.\ Manin, {\em Three-dimensional quartics and counterexamples to the L\"uroth problem}, Mat.\ Sb.\ \textbf{86} (1971), 140--166. Eng.\ trans., Math.\ Sb.\ \textbf{15} (1972), 141--166.
 

\bibitem[Kol95]{kollar}
J.\ Koll\'ar, {\em Nonrational hypersurfaces}, J.\ Amer.\ Math.\ Soc.\ \textbf{8} (1995), 241--249. 


\bibitem[KT19]{KT}
M.\ Kontsevich and Yu.\ Tschinkel, {\em Specialization of birational types}, Invent.\ Math.\ \textbf{217} (2019), 415--432. 



\bibitem[Lev05]{levine}
M.\ Levine, {\em Mixed Motives},
in: Handbook of K-theory, vol.\ \textbf{1},
E.M. Friedlander, D.R. Grayson, eds.,
429--522.  Springer--Verlag, 2005.

\bibitem[Mer08]{merkurjev}
A.S.\ Merkurjev, {\em Unramified elements in cycle modules}, J.\ London Math.\ Soc.\ \textbf{78} (2008), 51--64.


\bibitem[Mur73]{Mur}
J.P.\ Murre, {\em Reduction of the proof of the non-rationality of a non-singular cubic
threefold to a result of Mumford}, Comp.\ Math.\
\textbf{27} (1973), 63--8.


\bibitem[NO19]{NO}
J.\ Nicaise and J.C.\ Ottem, {\em Tropical degenerations and stable rationality},  to appear in Duke Math.\ Journal.

\bibitem[NS19]{NS}
J.\ Nicaise and E.\ Shinder, {\em The motivic nearby fiber and
degeneration of stable rationality},  Invent.\ Math.\ \textbf{217} (2019), 377--413.


\bibitem[Rob70]{Roberts} 
J.\ Roberts, {\em Chow's Moving Lemma}, 
Algebraic Geometry, Proceedings of the 5th Nordic Summer-School in Mathematics,
89--96, Oslo (1970), Wolters-Noordhoff, Groningen.


\bibitem[Sch19a]{Sch-Duke}
S.\ Schreieder, {\em On the rationality problem for quadric bundles}, Duke Math.\ J.\ \textbf{168} (2019), 187--223.

\bibitem[Sch19b]{Sch-JAMS}
S.\ Schreieder, {\em Stably irrational hypersurfaces of small slopes},
J.\ Amer.\ Math.\ Soc.\ \textbf{32} (2019), 1171--1199.


\bibitem[Sch21a]{Sch-torsion}
S.\ Schreieder, {\em Torsion orders of Fano hypersurfaces},  Algebra Number Theory \textbf{15} (2021), 241--270.


\bibitem[Sch21b]{Sch-survey}
S.\ Schreieder, {\em Unramified cohomology, algebraic cycles and rationality},  in: G. Farkas et al. (eds), Rationality of Varieties,
Progress in Mathematics \textbf{342}, Birkh\"auser (2021), 345--388.


\bibitem[Ska22]{skauli}
B.\ Skauli, {\em A (2,3)-complete intersection fourfold with no decomposition of the diagonal}, manuscripta math.\ (2022),
\url{https://doi.org/10.1007/s00229-022-01386-y}.


\bibitem[Tot16]{totaro}
B.\ Totaro, {\em Hypersurfaces that are not stably rational}, J.\ Amer.\ Math.\ Soc.\ \textbf{29} (2016), 883--891. 
 

\bibitem[Voi15]{voisin}
C.\ Voisin, {\em Unirational threefolds with no universal codimension 2 cycle}, Invent.\ Math.\ \textbf{201} (2015), 207--237.



\end{thebibliography}
\end{document}